\documentclass[hidelinks,11pt]{article}

\newcommand{\bol}{\boldsymbol}

\newcommand{\nex}{\boldsymbol{x}}

\newcommand{\de}{\,\mathrm{d}}                               
\newcommand{\e}{\operatorname{e}}

\newcommand{\andtext}{\quad\mbox{and}\quad}

\newcommand{\p}{\partial}

\newcommand{\real}{\mathrm{Re}\,}    
                                   
\newcommand{\imag}{\mathrm{Im}\,}

\newcommand{\lf}{\left}
\newcommand{\rg}{\right}

\newcommand{\Z}{\mathbb{Z}}     
\newcommand{\R}{\mathbb{R}}       
\newcommand{\C}{\mathbb{C}}

\usepackage{amsmath}
 % Jan Hlavacek
\usepackage{amsmath}
\usepackage{bm}
%\usepackage{subcaption}
%*************************************************************
\usepackage{multirow}
\usepackage{amsmath}
\usepackage{amsfonts}
\usepackage{amsmath}
\usepackage{amssymb}  
\usepackage{graphicx}
\usepackage{caption}
\usepackage{anyfontsize}
\usepackage[section]{placeins}
\usepackage{mathrsfs}
\usepackage{upgreek}
\usepackage{amsthm}
\usepackage{subfig}
\usepackage{booktabs}
\usepackage{authblk}
\usepackage{xcolor}
\usepackage{cite}
\usepackage{soul}
\newcommand{\Zn}{\mathbb{Z}}
\newcommand{\Cn}{\mathbb{C}}
\newcommand{\sF}{\mathsf} 
\newcommand{\sspan}{\operatorname{span}}
\usepackage[]{algorithm2e}
%*************************************************************
%\textheight=23.0cm \textwidth=16.5cm
%\topmargin=-2cm \oddsidemargin=0cm \setlength{\parindent}{0.5cm}
%\topmargin -.5in
%\oddsidemargin 0pt
%\textheight 8.8in
%\textwidth 6.5in

%*************************************************************
\newtheorem{theorem}{Theorem}[section]
\newtheorem{lemma}[theorem]{Lemma}
\newtheorem{proposition}[theorem]{Proposition}

\newtheorem{remark}[theorem]{Remark}
\newtheorem{definition}[theorem]{Definition}

%\newenvironment{proof}[1][Proof]{\begin{trivlist}
%\item[\hskip \labelsep {\bfseries #1}]}{\end{trivlist}}
%\newenvironment{definition}[1][Definition]{\begin{trivlist}
%\item[\hskip \labelsep {\bfseries #1}]}{\end{trivlist}}
%\newenvironment{example}[1][Example]{\begin{trivlist}
%\item[\hskip \labelsep {\bfseries #1}]}{\end{trivlist}}
%\newenvironment{remark}[1][Remark.]{\begin{trivlist}
%\item[\hskip \labelsep {\bfseries #1}]}{\end{trivlist}}

%\newcommand{\qed}{\nobreak \ifvmode \relax \else
%      \ifdim\lastskip<1.5em \hskip-\lastskip
%      \hskip1.5em plus0em minus0.5em \fi \nobreak
%      \vrule height0.75em width0.5em depth0.25em\fi}
%*************************************************************

\topmargin -.5in
\oddsidemargin 0pt
\textheight 8.8in
\textwidth 6.5in

 \usepackage{xcolor}
 \usepackage[normalem]{ulem}

\usepackage{hyperref}
\usepackage{cleveref}

\title{Reflectionless discrete perfectly matched layers for higher-order finite difference schemes}

 \author[1]{Vicente A. Hojas\thanks{V.A. Hojas was supported by Beca ANID de Magíster Nacional N. 22230599.}}
\affil[1]{\small{School of Engineering, Pontificia Universidad Cat\'olica de Chile, Santiago, Chile}}
 \author[2]{Carlos P\'erez-Arancibia}
 \affil[2]{\small{Department of Applied Mathematics and MESA+ Institute, University of Twente, Enschede,  The Netherlands}}
\author[3]{Manuel A. S\'anchez\thanks{M.A. S\'anchez was supported by FONDECYT Regular  N. 1221189 and by Centro Nacional de Inteligencia Artificial CENIA, FB210017, Basal ANID Chile.}}
 \affil[3]{\small{Institute for Mathematical and Computational Engineering, Pontificia Universidad Cat\'olica de Chile, Santiago, Chile}}

\date{\today}

\begin{document}

\maketitle

%*************************************************************

\begin{abstract}

This paper introduces discrete-holomorphic Perfectly Matched Layers (PMLs) specifically designed for high-order finite difference (FD) discretizations of the scalar wave equation.  In contrast to standard PDE-based PMLs, the proposed method achieves the remarkable outcome of completely eliminating numerical reflections at the PML interface, in practice achieving errors at the level of machine precision. Our approach builds upon the ideas put forth in a recent publication [\emph{Journal of Computational Physics} 381 (2019): 91-109] expanding the scope from the standard second-order FD method to arbitrary high-order schemes. 
This generalization uses additional localized PML variables to accommodate the larger stencils employed. 
We establish that the numerical solutions generated by our proposed schemes exhibit a geometric decay rate as they propagate within the PML domain. To showcase the effectiveness of our method, we present a variety of numerical examples, including waveguide problems. These examples highlight the importance of employing high-order schemes to effectively address and minimize undesired numerical dispersion errors, emphasizing the practical advantages and applicability of our approach.

\noindent{\bf Keywords}: Wave equation, Helmholtz equations, Perfectly Matched Layer, absorbing boundary condition, non-reflecting boundary condition, finite difference method.

\end{abstract}

\maketitle %% required

\section{Introduction}

Problems of wave propagation in unbounded domains play a paramount role in many applications, especially in cases where the behavior of the underlying physical phenomena is governed by wave equations in linear acoustics, electromagnetics, or elastodynamics. The infinite propagation of waves within unbounded domains is mathematically expressed through the use of an appropriate \emph{radiation condition}. This condition serves as a boundary condition at infinity, ensuring not only the accurate representation of the physical behavior of the solution but also the well-posedness of the problem. When not expressed as a  uniform limit, this condition, which is often referred to as the exact transparent boundary condition, entails the evaluation of rather involved non-local pseudo-differential operators acting on the wavefield traces at a (bounded) artificial surface enclosing the region of interest. The effective approximation and feasible numerical implementation of such condition---which is essential to effectively reduce the problem to a bounded computational domain where, e.g., off-the-shelf finite difference (FD) and finite element (FE) PDE solvers can be applied---poses important theoretical and practical challenges on which much has been written about over the last half a century; see, e.g.,~\cite{givoli2013numerical,taflove} and references therein. 

In a nutshell, domain truncation typically involves two primary approaches: Absorbing Boundary Conditions (ABCs) (also referred to as non-reflecting boundary conditions) and Perfectly Matched Layers (PMLs). Classical local ABCs, which rely on Pad\'e~\cite{Engquist1977} or more sophisticated expansions of the underlying dispersion relation~\cite{halpern1988wide,bayliss1980radiation,bayliss1982boundary}, lead to boundary conditions that although local in nature, involve evaluation of high-order time- and space-derivatives of the solution or, alternatively, the introduction of auxiliary variables; see~
\cite{higdon1986absorbing} for a general framework that encompasses many local ABCs. Owing to the fact that they rely on approximations of the exact transparent boundary condition, local ABCs inherently introduce errors that manifest themselves as spurious reflections, which exist at the continuous level even before any discretization is applied~\cite{halpern1987error}. Non-local ABCs based on retarded potentials~\cite{ting1986exact} as well as boundary integral equation formulations~\cite{greenspan1966numerical,johnson1980coupling} (e.g., finite element and boundary element coupling techniques) and spectral series representations of Dirichlet-to-Neumann maps~\cite{fix1978variational,maccamy1980finite,goldstein1982finite,canuto1985spectral,keller1989exact,nicholls2004exact}, have also been extensively investigated; see, e.g.,\cite{givoli2013numerical,ihlenburg1998finite}. However, as local ABCs, they suffer from spurious reflections stemming from various sources, such as the discretization of the associated singular integral operators and truncation of Fourier-Bessel and spherical harmonics series expansions.

The most widely adopted approach for domain truncation is, nonetheless, the PML method, initially proposed by J.-P. Berenger in \cite{Berenger1994}.  Unlike ABCs that introduce boundary conditions to absorb outgoing waves at the artificial boundary, PMLs consist of a finite-thickness artificial-material layer surrounding the truncated portion of the physical domain. This artificial material is specifically designed to absorb outgoing waves impinging on the interface between the physical and PML domains without generating undesired reflections. At the continuous level, this is achieved by formulating a material that perfectly matches the ambient medium and exponentially dampens outgoing waves as they propagate through the absorbing layer; a comprehensive review can be found in \cite[Ch. 7]{taflove}.
The appeal of the PML method stems from its ease of implementation in existing PDE solvers, as it does not require the evaluation of high-order derivatives or the computation of non-local operators. However, at the discrete level, the ``perfect matching" is compromised due to the discrete representation of material properties. Consequently, challenging-to-control spurious numerical reflections emerge, ultimately contaminating the sought-after approximate numerical solution. Extensive research efforts have been dedicated to developing optimized PML formulations that minimize such errors (see, e.g., \cite{asvadurov2003,Berenger1999,Berenger2002,katz1994validation,bermudez2006optimal,bermudez2007optimal,michler2007improving,modave2014optimizing,collino1998optimizing}). Interestingly, in~\cite{asvadurov2003}, an optimal PML is developed, leading to a purely imaginary complex stretching which is proven to be equivalent to an ABC approach based on Pad\'e approximations of the square root. Nevertheless, it is worth noting that the presence of spurious numerical reflection errors in the PML discretization approach is essentially unavoidable \cite{chew1996perfectly}.

An important advancement in this field is the introduction of the reflectionless discrete Perfectly Matched Layer (RDPML) scheme, developed entirely at the discrete level \cite{Chern_2019}.  Although other discrete-based approaches exist to construct ABCs and non-reflecting boundary conditions~\cite{arnold2003,rowley2000}, as far as the authors are aware, the second-order FD scheme \cite{Chern_2019} for the scalar wave equation stands as the sole existing PML-based method that achieves effective reflectionlessness at the discrete level. The key concept underlying its formulation lies in extending the complex coordinate stretching approach of PMLs \cite{Chew1994} to the discrete complex plane, employing discrete complex analysis tools \cite{duffin1956basic}. To achieve the discrete holomorphicity of the numerical solution, this scheme incorporates two additional unknown vectors that represent the discrete complex derivatives of the solution at the grid points located within the PML domain.

In this work, we build upon Chern's second-order scheme \cite{Chern_2019} to develop RDPMLs for arbitrary higher-order FD spatial discretizations. Our primary motivation for this research is to tackle the challenge of dispersion errors, which are well-known limitations that affect the accuracy of both time-domain and frequency-domain FD schemes for wave-type problems, particularly when dealing with elongated domains such as waveguides (see, e.g., \cite{trefethen1982group} and \cite[ch. 5]{Trefethen1996}). 
Similar to Chern's scheme, our proposed schemes incorporate additional variables that correspond to the discrete complex derivatives of the approximate solution at the grid points within the PML domain. The number of these additional variables depends on the chosen stencil size or the order of the FD scheme employed.

To establish a clear focus, we concentrate on solving the linear scalar wave equation in free space:
\begin{align}
    \frac{\partial^2 u}{\partial t^2}(\nex, t)-
    \Delta u(\nex, t) & = 0, \quad 
    t>0, \; \nex \in \mathbb{R}^d,\quad d=1,2,
    \label{eqn:wave_1d}
\end{align}
 subject to the initial conditions
\begin{equation*}
u(\nex, 0) = g_0(\nex) \andtext \frac{\partial u}{\partial t}(\nex, 0) = g_1(\nex),    \qquad \nex\in\R^d,
\end{equation*}
where $g_0$ and $g_1$ are given smooth functions. (In \Cref{sec:numerical_results} we also consider a waveguide problem with Dirichlet boundary conditions.)  While we provide explicit expressions for RDPML schemes in one and two spatial dimensions ($d=1,2$), our results seamlessly extend to higher spatial dimensions through the use of tensor products. 

The paper is structured as follows: \Cref{sec:one_dim} revisits the definition of some relevant properties of the class of one-dimensional high-order FD schemes for which we develop RDPMLs. \Cref{sec:complex} then establishes the theoretical framework for the  discrete complex stretching method and proves the reflectionless property of the schemes (see \Cref{thm:reflectionless} and \Cref{rem:afterThrm34}). In \Cref{sec:PML_eqs}, we derive the PML equations for the one-dimensional wave equation, and in \ref{sec:two_dims}, we extend these derivations to the two-dimensional wave equation. \Cref{sec:linear_system} focuses on the reduced (time-harmonic) wave equations, providing expressions for implementing the schemes without auxiliary variables and discussing the structure of the resulting linear systems. Numerical examples are presented in \Cref{sec:numerical_results}, and the conclusions of this work are provided in~\Cref{sec:conclusion}.

\section{High-order finite-difference schemes in one dimension}\label{sec:one_dim}
% ---------------------------------------------
We start off this section by establishing a few elementary results on high-order finite difference operators. Throughout this paper, we employ centered and symmetric finite difference approximations of the second-order space derivative ($1d$-Laplacian) at grid points $x_j = jh$, $j\in\Zn$, $h>0$, which give rise to the following $(2p+1)$-points wide stencils specified by  coefficients $\{a_r\}_{r=-p}^p\subset\mathbb R$:
\begin{align}\label{eqn:finitedifferenceoperator0}
    \frac{\de^2v}{\de x^2}\Big|_{x_j} & =
    \sum_{r=-p}^p a_r v(x_{j+r}) +\mathcal O(h^{2p}),\quad j\in\mathbb Z,
\end{align}
where we have assumed that $v\in C^{2p}(\R)$, $p\in \mathbb N$.
 The schemes~\eqref{eqn:finitedifferenceoperator0} that we consider are assumed of optimal order of accuracy which means that their order is at least $2p$. Furthermore, we assume that the stencil is symmetric which means that $a_r=a_{-r}$ for all $r=1,\ldots p$. 
 
As it turns out, arbitrarily high-order symmetric stencils satisfying~\eqref{eqn:finitedifferenceoperator0} can be easily obtained by plugging in a Fourier mode $v(x) = \e^{i\xi x}$ into~\eqref{eqn:finitedifferenceoperator0} which yields the relation
 $$
    \sum_{r=-p}^p a_r v(x_{j+r}) =\e^{ijh\xi}\left(a_0+2\sum_{\ell = 1}^p a_\ell \cos(\ell h\xi)\right).
 $$
To find the coefficients $\{a_r\}_{r=0}^p$, we expand  $\cos(\ell h\omega)$ above in Taylor's series of order $p$ about $h\xi =0$ and make the resulting approximation equal to the exact second-order derivative, i.e., $v''(x_j)=-\xi^2v(x_j)$. This gives rise to the equation
\begin{equation*}%\label{eq:disc_disp_rel_2}
a_0 +2\sum_{\ell=1}^p a_\ell \sum_{n=0}^p\frac{(-1)^n(\ell h\xi)^{2n}}{(2n)!} = -\xi^2,
\end{equation*}
from where the linear system for the coefficients $\{a_r\}_{r=0}^p$ is obtained, $\delta_{j,k}$ the Kronecker delta,
\begin{equation*}%\label{eq:stencil_coeff_eqn}
\frac{a_0\delta_{n,0}}{2}+\sum_{r=1}^p (rh)^{2n}a_r=\delta_{n,1},\quad n=0,\ldots,p.
\end{equation*}
We mention in passing that although we only address in detail the derivations of the RDPML method for symmetric even-order finite difference schemes,  our findings can be extended to more sophisticated schemes tailored to wave-type problems such as  dispersion-relation-preserving~\cite{tam1993dispersion} and joint time–space domain~\cite{liu2009new} schemes. 

\begin{table}[htp]
\centering\small
\begin{tabular}{cl@{\hskip .0in}c@{\hskip .3in}c@{\hskip .3in}c@{\hskip .3in}c@{\hskip .3in}c@{\hskip .3in}c@{\hskip .3in}c@{\hskip .3in}c@{\hskip .3in}c}
\toprule
Order & & \multicolumn{9}{c}{Coefficients $a_{r}\cdot h^2$, $r=-p,\ldots,p$} \\[0.2em]
$(2p)$& & $-4$ & $-3$ & $-2$ & $-1$ & $0$ & $1$ & $2$ & $3$ & $4$ \\ 
\midrule \\  
$2$ && & & &$1$ &$-2$ & $1$ & & & \\[1em]
$4$ && & &$ -\frac{1}{12}$ & $\frac{4}{3}$ & $-\frac{5}{2}$& $\frac{4}{3}$ &$ -\frac{1}{12}$  & &  \\[1em]
$6$ && & $\frac{1}{90}$ & $-\frac{3}{20}$ & $\frac{3}{2}$ & $-\frac{49}{18}$ & 
        $\frac{3}{2}$ & $-\frac{3}{20}$ & $\frac{1}{90}$ & \\[1em]
$8$ && $-\frac{1}{560}$ & $\frac{8}{315}$ & $-\frac{1}{5}$ & $\frac{8}{5}$ & 
        $-\frac{205}{72}$ & $\frac{8}{5}$ & $-\frac{1}{5}$ & $\frac{8}{315}$ & 
        $-\frac{1}{560}$\\[1em]
\bottomrule
\end{tabular}\vskip2mm
\caption{Coefficients of the centered finite difference approximation operator to the Laplacian in one dimension; see e.g. \cite{Fornberg1988}.}
\label{tab:fd_examples}
\end{table}
 
In the derivations of the RDPML method presented in the next sections, we apply high-order finite difference schemes of the form~\eqref{eqn:finitedifferenceoperator0} to the spatial discretization of the wave equation~\eqref{eqn:wave_1d} while keeping the time variable continuous. To simplify the analysis of the spatial discretization, for the time being, we consider the one-dimensional ($d=1$) wave equation in the frequency domain whereby applying Fourier transform, 
$$
u(x,t)=\frac{1}{\sqrt{2 \pi}} \int_{-\infty}^{\infty} \e^{-i \omega t} v(x,\omega) \de \omega,
$$
 we arrive at the reduced  wave (Helmholtz) equation 
 \begin{equation}\label{eq:helm}
\frac{\de^2 v}{\de x^2}(x) +\omega^2 v(x) = 0,\quad x\in\R,
\end{equation}
for $v=v(\cdot,\omega):\R\to\C$. Upon applying the high-order finite difference scheme~\eqref{eqn:finitedifferenceoperator0} to~\eqref{eq:helm}  we then obtain 
\begin{equation}\label{eq:disc_Helm}
\mathcal L_p[\mathsf v] +\omega^2\mathsf v=\sF 0,
\end{equation}
where \begin{equation}\label{eqn:finitedifferenceoperator}
\mathcal L_p [\mathsf v]_j :=  \sum_{r=-p}^p a_r v_{j+r},\quad j\in\Zn,
\end{equation} with $\mathsf v=\mathsf v(\omega)\in\mathbb C^{\mathbb Z}$ containing the approximate grid values of the smooth function~$v(\cdot,\omega)$ in~\eqref{eq:helm}. Borrowing the notation for sequences, here and in the sequel we sometimes denote by $\mathbb K^\mathbb Z$, with $\mathbb K=\mathbb R$ or $\mathbb C$, the set of discrete functions $\sF v:\mathbb Z\to \mathbb K$,  and by $\mathbb K^{\mathbb Z\times \mathbb Z}$ the set of functions $\sf V:\mathbb Z^2\to\mathbb K$. The difference operator $\mathcal L_p$ can then be understood as a linear mapping from $\mathbb K^\mathbb Z$ to $\mathbb K^\mathbb Z$.

As it turns out, unlike the continuous problem~\eqref{eq:helm}, which admits only two linearly independent solutions (modes) $\e^{i\omega x}$ and $\e^{-i\omega x}$, the discretized equation~\eqref{eq:disc_Helm} may admit more linearly independent solutions depending on the order of the scheme. To establish the effectiveness of the proposed discrete PML, we need to properly characterize such discrete modes. 
{ In order do so, we first define 
$$
\mathsf w(\xi)\in \mathbb C^{\mathbb Z},\quad \mathsf w(\xi):=(\e^{i j\xi h}), \quad j\in\mathbb Z,
$$ and for $\lambda \in \C$ and $r\in\{0,\ldots,p\}$ we let 
\begin{equation} \quad b_{r}(\lambda) :=
    \begin{cases}
    \displaystyle (a_0+\lambda) + 2\sum_{l=1}^{\lfloor p/2 \rfloor}(-1)^{l}a_{2l},& \text{if }r = 0,\medskip\\
    \displaystyle \sum_{l=k}^{\lfloor p/2 \rfloor} (-1)^{l-k}\binom{l+k-1}{2k-1}\frac{l}{k}a_{2l}, & \text{if } r= 2k\neq 0, \medskip\\
    \displaystyle \sum_{l=k}^{\lceil p/2 \rceil-1} (-1)^{l-k} \binom{l+k}{2k}\frac{2l+1}{2k+1}a_{2l+1},& \text{if } r = 2k+1.
    \end{cases}\label{eq:pol_coeff}
    \end{equation}
where $\{a_{r}\}_{r=-p}^p$ are the coefficients of the stencil~\eqref{eqn:finitedifferenceoperator0}. With these definitions at hand, we introduce the following lemma:

\begin{lemma}\label{thm:operator_sols}
Let $\lambda\in\mathbb C$ and $\{z_r(\lambda)\}_{r=1}^p\subset\mathbb C$ be the roots of the $p$th-degree polynomial $P_p(\cdot;\lambda)\in \mathcal P^{p}(\mathbb C)$  defined as
    \begin{equation}\label{eqn:polyPc}
    P_{p}(z;\lambda) = \sum_{r=0}^{p} b_{r}(\lambda) z^{r},  \end{equation} which are assumed to be distinct. 
Then,  the general solution $0\neq \mathsf  v\in \mathbb C^{\mathbb Z}$ of
\begin{equation}\label{eq:eigen}
\mathcal L_p[\mathsf v] +\lambda \mathsf v=\sF 0,
\end{equation}
satisfies $\mathsf v\in E_\lambda$, where $E_\lambda\subset \mathbb C^\mathbb Z$ is defined as
 \begin{align*}
            E_{\lambda} = 
            \begin{cases}
            \displaystyle
                \sspan\{\mathsf w(\xi_r), \mathsf  w(-\xi_r)\}_{r=1}^p, & 
                 z_r \neq \pm 2, r=1,\ldots,p, \\
                \sspan\{\{1, j\}\cup\{\mathsf  w(\xi_r), \mathsf  w(-\xi_r)\}_{r=2}^ p\},
                &  z_1 = 2, z_r\neq -2, r=2,\ldots,p,\\
                \sspan\{\{(-1)^j, j(-1)^j\}\cup\{\mathsf  w(\xi_r), \mathsf  w(-\xi_r)\}_{r=2}^p\},
                &  z_1 = -2, z_r\neq 2, r=2,\ldots,p,
            \end{cases}
    \end{align*}
in terms of   \begin{equation} 
\xi_r=\xi_r(\lambda) := \frac{1}{h}\cos^{-1}\left(\frac{z_r(\lambda)}{2}\right)\in\left\{z\in\Cn:0\leq\real z\leq \frac{\pi}{h}\right\}.\label{eq:cos_rel}
   \end{equation}
\end{lemma}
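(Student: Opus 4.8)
The plan is to regard \eqref{eq:eigen} as a linear, homogeneous, constant-coefficient recurrence. Since the stencil \eqref{eqn:finitedifferenceoperator} couples $v_{j-p}$ through $v_{j+p}$ and the extreme coefficients $a_{\pm p}$ are nonzero, each value $v_{j+p}$ is determined by the preceding $2p$ values, so the solution set $E_\lambda$ is a vector space of dimension exactly $2p$. It therefore suffices to exhibit $2p$ linearly independent solutions and argue that they span $E_\lambda$.

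First I would look for modal solutions $v_j=\mu^j$ with $\mu\in\Cn\setminus\{0\}$. Substituting into \eqref{eq:eigen} and dividing by $\mu^{\,j-p}$ yields the characteristic equation $\sum_{r=-p}^p a_r\mu^{r}+\lambda=0$, a Laurent polynomial that becomes, after multiplication by $\mu^{p}$, a genuine degree-$2p$ polynomial in $\mu$. The decisive step is to exploit the stencil symmetry $a_r=a_{-r}$: rewriting the characteristic equation as $a_0+\lambda+\sum_{r=1}^{p}a_r\bigl(\mu^{r}+\mu^{-r}\bigr)=0$ and introducing the change of variable $z=\mu+\mu^{-1}$, I would invoke the Chebyshev/Dickson identity $\mu^{r}+\mu^{-r}=2T_r(z/2)$ (valid for every $\mu\neq0$) to collapse the left-hand side into a polynomial of degree $p$ in the single variable $z$. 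Matching powers of $z$ then turns the characteristic equation into $P_p(z;\lambda)=0$ with $P_p$ as in \eqref{eqn:polyPc}.

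I expect the coefficient identification to be the main obstacle. Concretely, one must insert the explicit monomial expansion of each $2T_r(z/2)$, separate the even-indexed coefficients $a_{2l}$ (which feed only even powers $z^{2k}$) from the odd-indexed ones $a_{2l+1}$ (which feed only odd powers $z^{2k+1}$), and verify that the collected coefficient of $z^{r}$ is precisely $b_r(\lambda)$ in \eqref{eq:pol_coeff}. This reduces to checking the binomial identities $\tfrac{2l}{l+k}\binom{l+k}{2k}=\tfrac{l}{k}\binom{l+k-1}{2k-1}$ in the even case and $\tfrac{2l+1}{l+k+1}\binom{l+k+1}{2k+1}=\tfrac{2l+1}{2k+1}\binom{l+k}{2k}$ in the odd case, together with bookkeeping of the summation ranges $k\le l\le\lfloor p/2\rfloor$ and $k\le l\le\lceil p/2\rceil-1$; the constant term absorbs $a_0+\lambda$ plus the values $2T_{2l}(0)=2(-1)^{l}$.

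Finally I would transfer roots back to modes. For each root $z_r$ of $P_p(\cdot;\lambda)$, solving $\mu+\mu^{-1}=z_r$ gives a reciprocal pair, which I write as $\mu=\e^{\pm\im\xi_r h}$ by choosing the branch of $\cos^{-1}$ fixing $\xi_r$ in the strip of \eqref{eq:cos_rel}; this produces the modes $\mathsf w(\xi_r)$ and $\mathsf w(-\xi_r)$. Under the distinctness hypothesis on the $\{z_r\}$, and when $z_r\neq\pm2$, the corresponding $\mu$-values are $2p$ distinct points (the map $\mu\mapsto\mu+\mu^{-1}$ is two-to-one off $\pm2$), so the modes are linearly independent and span the $2p$-dimensional space $E_\lambda$, giving the first case. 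If some $z_r=\pm2$, then $\mu+\mu^{-1}=\pm2$ forces the double root $\mu=\pm1$ of the characteristic polynomial; by the standard theory of repeated roots the two associated solutions are $\{1,j\}$ when $z_r=2$ and $\{(-1)^j,\,j(-1)^j\}$ when $z_r=-2$, which combined with the remaining $2(p-1)$ simple modes yields the second and third cases and completes the dimension count.
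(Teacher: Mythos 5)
Your proposal is correct and follows essentially the same route as the paper's proof: the modal ansatz $v_j=\mu^j$, the symmetric reduction via $z=\mu+\mu^{-1}$ to the degree-$p$ polynomial $P_p(\cdot;\lambda)$ (the paper derives the needed $\mu^r+\mu^{-r}$ expansion by inverting binomial relations rather than citing the Chebyshev identity, but this is the same computation), and the case analysis on $z_r=\pm2$ producing the repeated-root solutions $\{1,j\}$ and $\{(-1)^j,j(-1)^j\}$. Your explicit dimension count for $E_\lambda$ is a small addition the paper leaves implicit.
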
}

\begin{proof}
Given $\lambda\in \mathbb C$, we seek non-trivial solutions  $0\neq\mathsf v=(v_j)$, $j\in\mathbb Z$, of \eqref{eq:eigen}.
% \begin{equation}\label{eqn:eig_problem0}
% \mathcal L_p[\mathsf v]_j =    \sum_{r=-p}^p a_r v_{j+r}  =-\lambda v_j , \quad j \in \mathbb{Z}.
% \end{equation}
To do so, we resort to the ansatz  $v_j = y^j$ for some $y \in \mathbb{C}\setminus \{0\}$
(see e.g. \cite[Chapter~2]{Bender2010}). Since $\mathcal L_p$ is a symmetric difference operator,~%\eqref{eqn:eig_problem0} 
\eqref{eq:eigen} can be expressed as
\begin{equation}\label{eqn:rational_equation0}
    (a_0+\lambda )+ \sum_{r=1}^p  a_r(y^r + y^{-r})  = 0. 
\end{equation}
Setting $z = y + 1/y$ and applying the binomial theorem, we obtain 
\begin{equation*}
    z^{r} = \sum_{k=0}^r{r \choose k} y^{r-k}y^{-k} = 
    \begin{cases}
    \displaystyle \binom{2\ell}{\ell}+\sum_{k=1}^{\ell} \binom{2\ell}{\ell-k}(y^{2k} + y^{-2k}), & \mbox{if }r=2\ell, \\
    \displaystyle \sum_{k=0}^{\ell} \binom{2\ell+1}{\ell-k}(y^{2k+1} + y^{-2k-1}), & \mbox{if }r=2\ell+1. 
    \end{cases}
\end{equation*}
Inverting the linear relations above we have the expression
\begin{equation}\label{eqn:yrs0}
    y^{r}+y^{-r} =
    \begin{cases}
    \displaystyle 2(-1)^{\ell} + \sum_{k=1}^{\ell} (-1)^{\ell-k} \binom{\ell+k-1}{2k-1}\frac{\ell}{k} z^{2k}, & \mbox{if }r=2\ell, \\
    \displaystyle \sum_{k=0}^{\ell} (-1)^{\ell-k}\binom{\ell+k}{2k}\frac{2\ell+1}{2k+1}z^{2k+1}, & \mbox{if }r=2\ell+1. 
    \end{cases}
\end{equation}
Thus, replacing \eqref{eqn:yrs0} in \eqref{eqn:rational_equation0}, we get that \eqref{eqn:rational_equation0} can be recast as the  polynomial equation 
\begin{equation*}%\label{eqn:Pc0}
    P_p(z;\lambda) = \sum_{r=0}^p b_r(\lambda)z^r = 0,
\end{equation*}
where the coefficients $b_r(\lambda)$ are given by~\eqref{eq:pol_coeff}.

Now, let $z_r(\lambda)$, $r=1,\ldots,p,$ be the assumed $p$ distinct roots of $p$th-degree polynomial $P_p(\cdot;\lambda)$% in \eqref{eqn:Pc0}
, and let $y_{\pm r}(\lambda)$ be the corresponding solutions of \eqref{eqn:rational_equation0}, which can be expressed in terms of $z_r(\lambda)$ as
\begin{align}
    y_{\pm r}(\lambda) & = \frac{z_r(\lambda)}{2} \pm i\sqrt{1 - \left(\frac{z_r(\lambda)}{2}\right)^2}.
    \nonumber
\end{align}
Assuming that $z_r(\lambda)\neq \pm 2$, we can write the solutions $\sF v = (v_j)$, $j\in\mathbb Z$, of %\eqref{eqn:eig_problem0}  
\eqref{eq:eigen} as a linear combination of $w_j(\xi)=\e^{ij\xi h}$:
$$
 v_j = \sum_{r=1}^{p}\left( A_{r} y_{r}^{j} + B_{r} y_{-r}^{j}\right)  = \sum_{r=1}^p C_rw_j(\xi_r) + C_{-r}w_j(-\xi_r), 
$$
for constants $A_r, B_r,C_r,C_{-r}$, for $r=1,..,p$,  where
\begin{equation*}%\label{eq:loc_wn}
    \xi_{r} =\xi_{r} (\lambda)= -\frac{i}{h} \ln\left(\frac{z_r(\lambda)}{2} + i\sqrt{1-\left(\frac{z_r(\lambda)}{2}\right)^{2}}\right) = \frac{1}{h} \cos^{-1}\left(\frac{z_r(\lambda)}{2}\right),
\end{equation*}
and where we have used the expression  $-i\ln(x + i\sqrt{1-x^2})$ for the complex inverse cosine function.

On the other hand, in the case when $z_r(\lambda)=2$, we set $r=1$ for simplicity,
and we get  that  the solution $y_{1}(\lambda) = 1$ of %\eqref{eqn:eig_problem0}
\eqref{eq:eigen} has multiplicity two.  The general solution of 
\eqref{eq:eigen} is in this case given by
    $$
    v_j  = A_1 + jB_1 + \sum_{r=2}^{p}A_rw_j(\xi_r) + B_{r}w_j(-\xi_r), \quad j\in\mathbb Z,
    $$
for constants $A_1, B_1$ and  $A_r, B_r$, for $r=2,\ldots,p$.
Similarly, if $z_1(\lambda)=-2$, we have that $y_1(\lambda)= -1$ is a repeated solution and the general solution of \eqref{eq:eigen} is given by  
$$
v_j  = (-1)^jA_1 + j(-1)^jB_1 + \sum_{r=2}^p A_r w_j(\xi_r) +B_{r}w_j(-\xi_r),\quad j\in\mathbb Z,
$$
for constants $A_1, B_1$ and  $A_r, B_r$, for $r=2,\ldots,p$.  \end{proof}

\begin{remark}
The coefficients~\eqref{eq:pol_coeff} of the polynomial $P_p(z;\lambda)$ in  \eqref{eqn:polyPc} are easily computed from the stencil coefficients $\{a_r\}_{r=-p}^p$. Indeed, the first four polynomials are 
\begin{alignat*}{2}
P_1(z;\lambda) & = (a_0+\lambda) + a_1 z, \\
P_2(z;\lambda) & = (a_0+\lambda-2a_2) + a_1 z +a_2 z^{2}, \\
P_3(z;\lambda) & = (a_0+\lambda-2a_2) + (a_1-3a_3)z + a_2 z^{2} + a_{3} z^{3}, \\
P_4(z;\lambda) & = (a_0+\lambda-2a_2+2a_4) + (a_1-3a_3)z + (a_2-4a_4)z^{2} + a_3z^3 + a_4 z^{4}.
\end{alignat*}
In particular, for the coefficients displayed on Table \ref{tab:fd_examples}, the resulting polynomials are:
\begin{subequations}\begin{alignat}{2}
h^{2}P_1(z;\lambda) & = \left(-2+\lambda h^{2}\right) + z, \\
h^{2}P_2(z;\lambda) & = \left(-\frac{7}{3}+\lambda h^{2}\right) + \frac43 z - \frac{1}{12} z^{2},\\
h^{2}P_3(z;\lambda) & = \left(-\frac{109}{45}+\lambda h^{2}\right) + \frac{22}{15} z -\frac{3}{20} z^{2} + \frac{1}{90} z^{3}, \\
h^{2}P_4(z;\lambda) & = \left(-\frac{772}{315}+\lambda h^{2}\right) + \frac{32}{21} z -\frac{27}{140} z^{2} + \frac{8}{315} z^3 -\frac{1}{560} z^{4}.
\end{alignat}\label{eq:exp_pol}\end{subequations}
\end{remark}

% \begin{figure}[ht!]
%         \centering
% 	\subfloat[$|\log(P_1(2\cos(\xi h);\lambda))|$]{\includegraphics[width=0.24\linewidth]{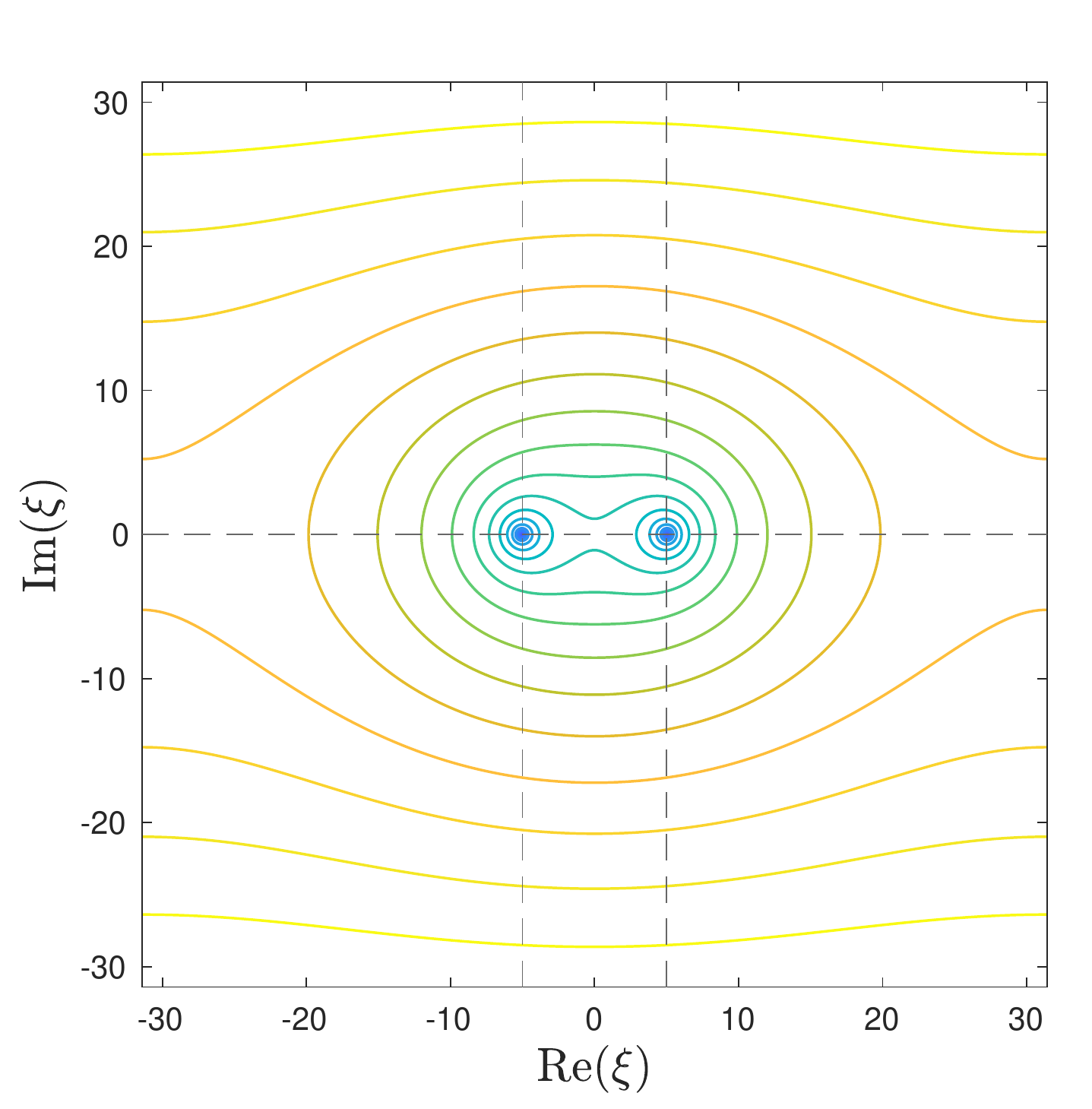}}
% 		\label{fig:P1}
% 	\subfloat[$|\log(P_2(2\cos(\xi h);\lambda))|$]{\includegraphics[width=0.24\linewidth]{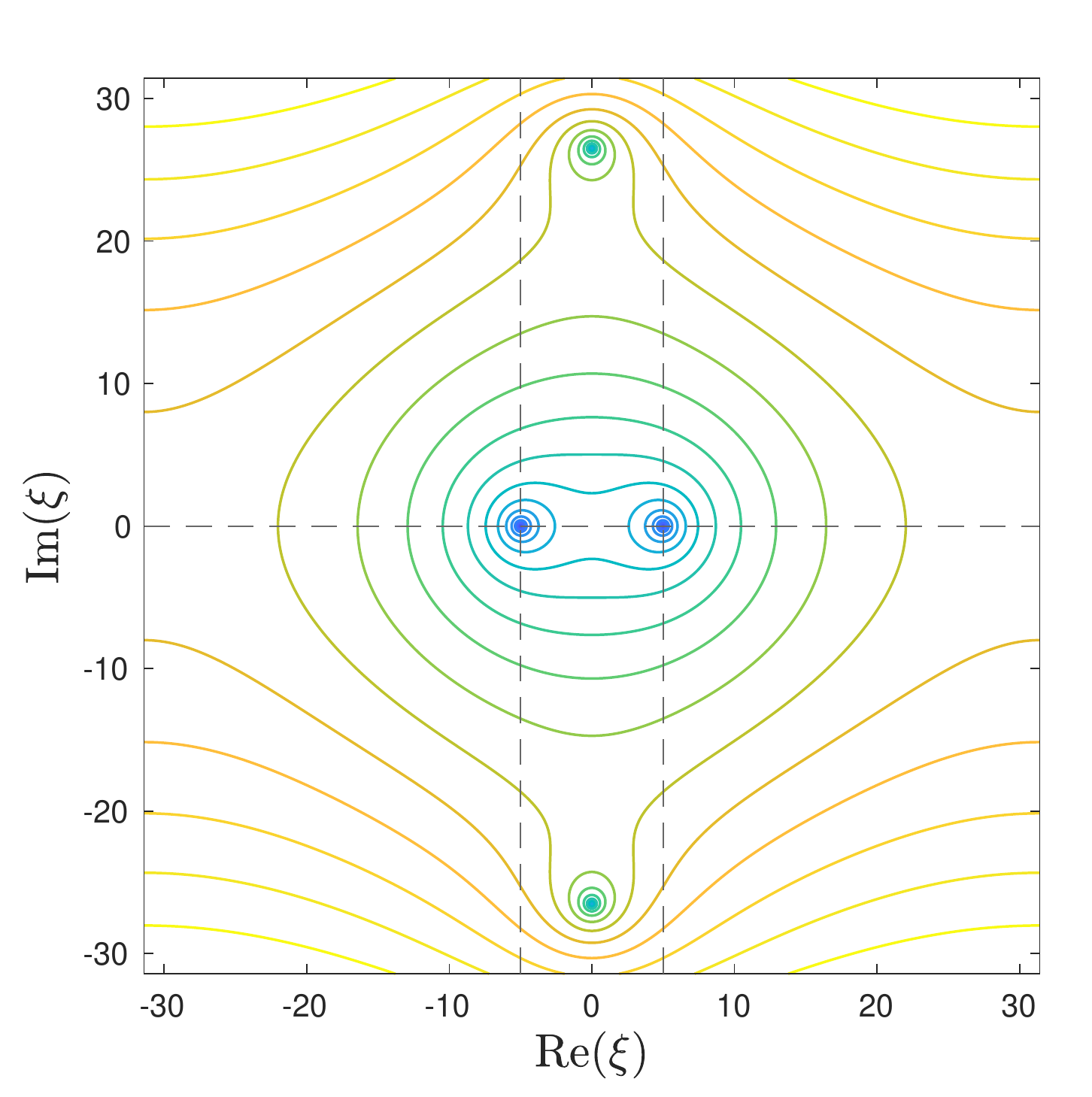}}
% 		\label{fig:P2}
%         	\subfloat[$|\log(P_3(2\cos(\xi h);\lambda))|$]{\includegraphics[width=0.24\linewidth]{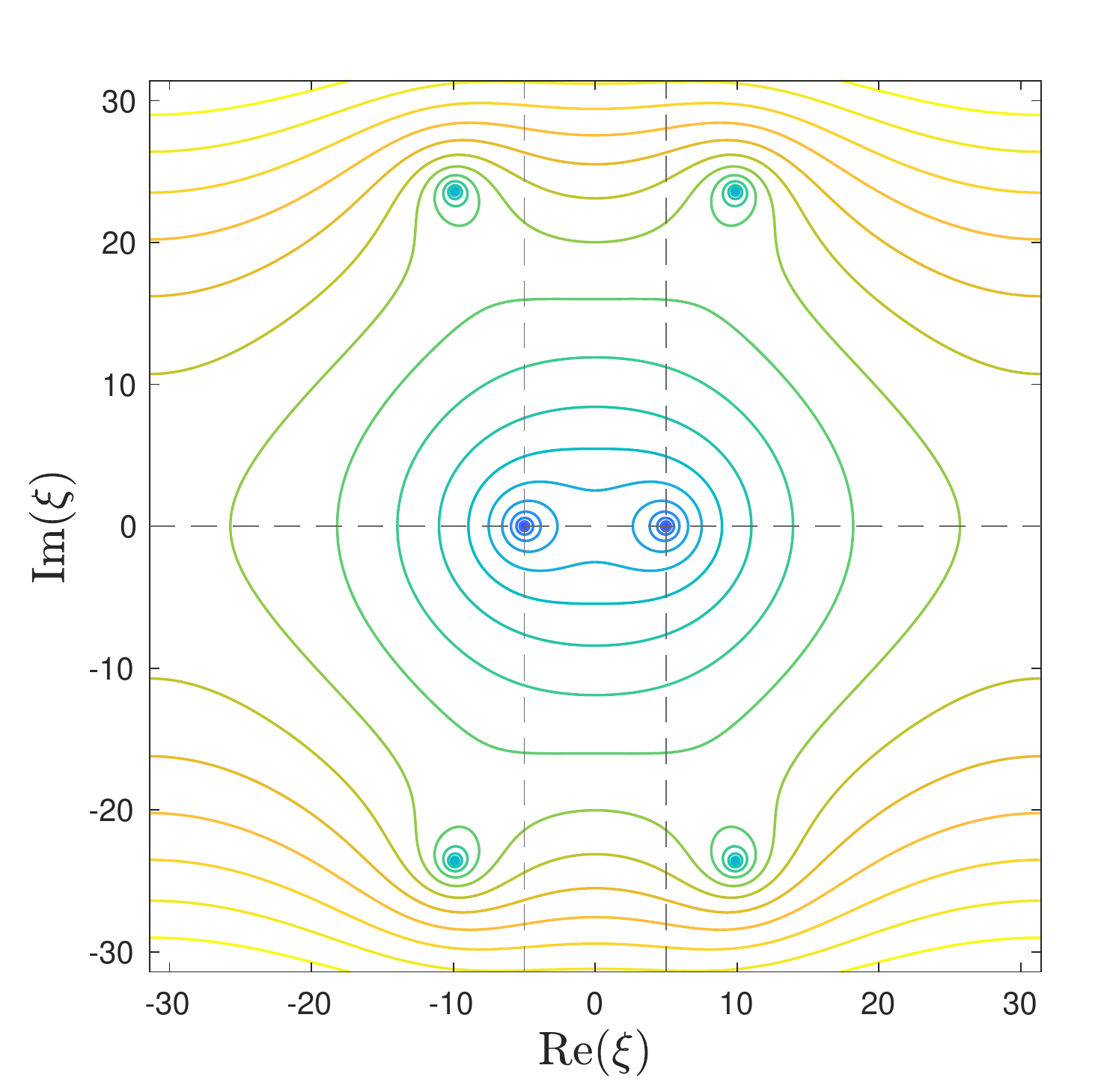}}
% 		\label{fig:P3}
%       	\subfloat[$|\log(P_4(2\cos(\xi h);\lambda))|$]{  \includegraphics[width=0.24\linewidth]{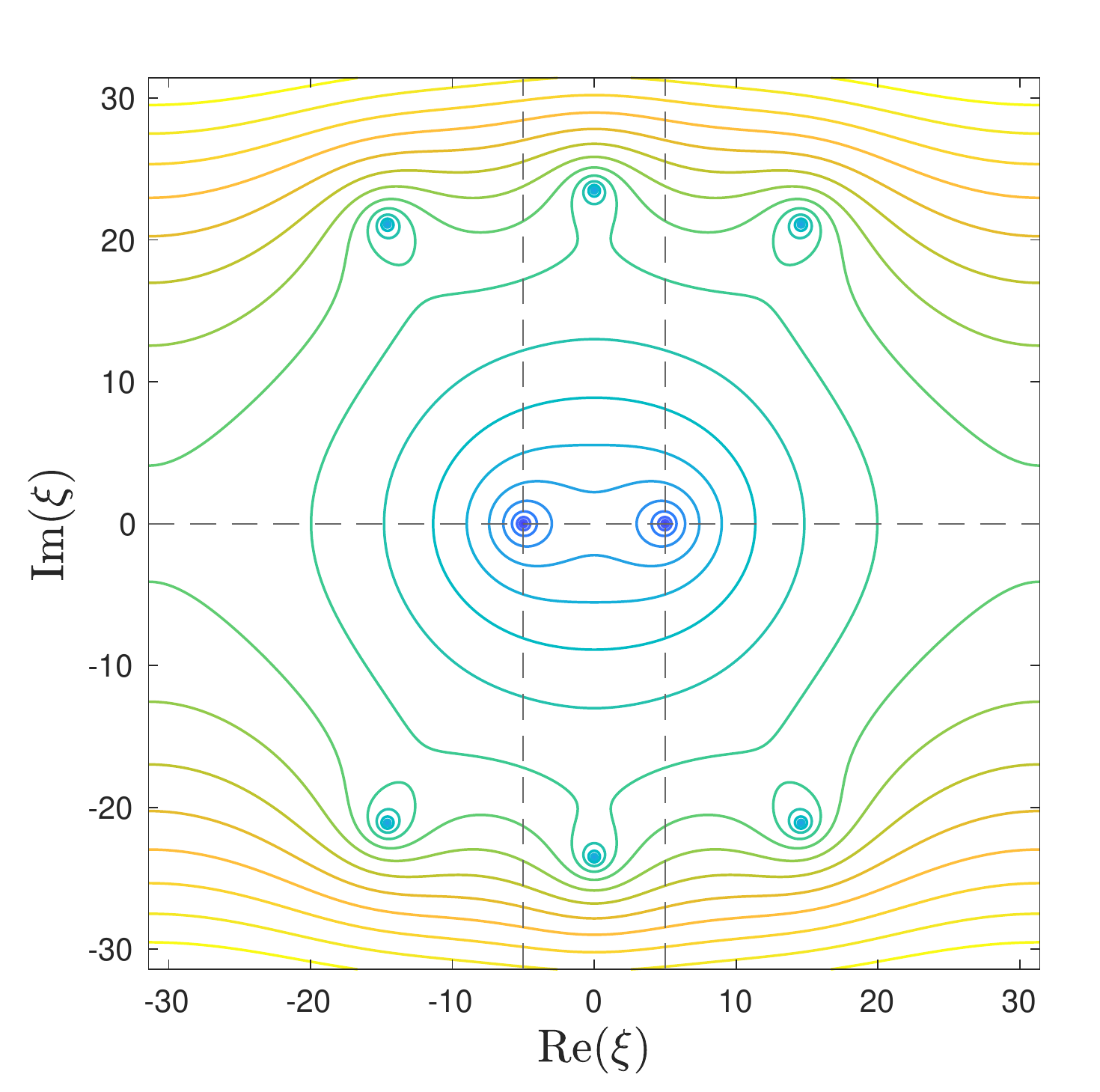}}
% 	        \label{fig:P4}
%         \caption{Location of discrete modes.}
%         \label{fig:disp_relation}
%     \end{figure}

        \begin{figure}[ht!]
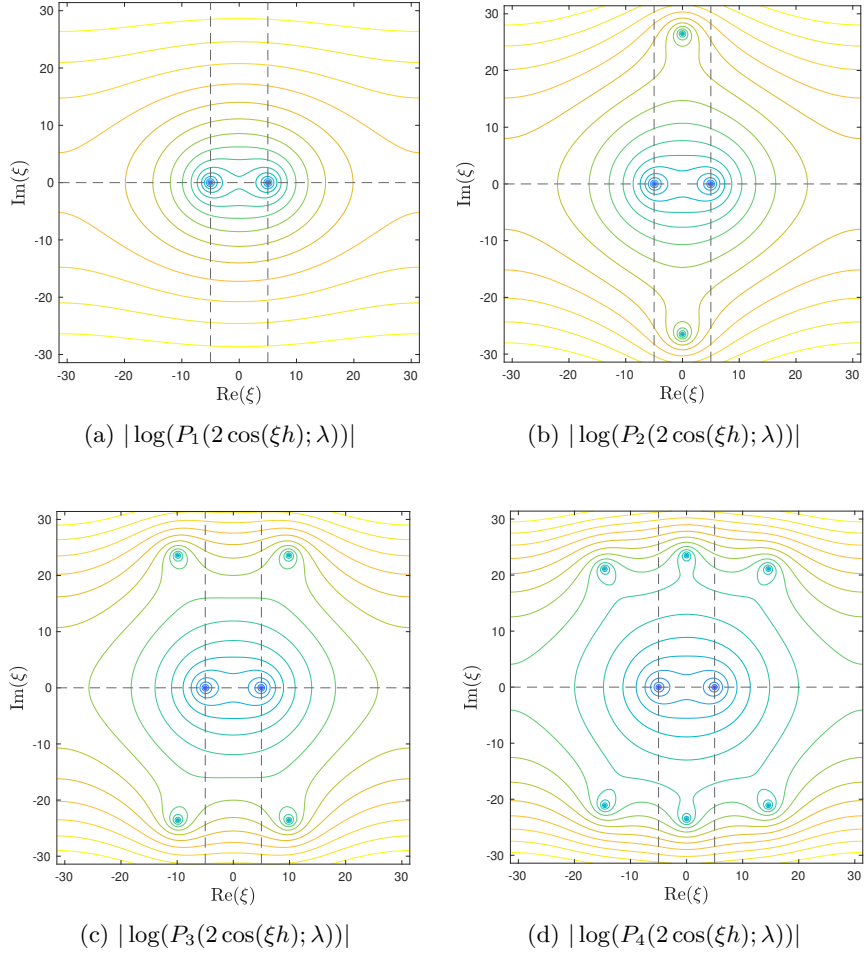

        \centering
	\subfloat[$|\log(P_1(2\cos(\xi h);\lambda))|$]{\includegraphics[width=0.35\linewidth]{Figures/fig_1a.pdf}}
		\label{fig:P1}
	\subfloat[$|\log(P_2(2\cos(\xi h);\lambda))|$]{\includegraphics[width=0.35\linewidth]{Figures/fig_1b.pdf}}
		\label{fig:P2}\\
        	\subfloat[$|\log(P_3(2\cos(\xi h);\lambda))|$]{\includegraphics[width=0.35\linewidth]{Figures/fig_1c.pdf}}
		\label{fig:P3}
      	\subfloat[$|\log(P_4(2\cos(\xi h);\lambda))|$]{  \includegraphics[width=0.35\linewidth]{Figures/fig_1d.pdf}}
	        \label{fig:P4}
        \caption{Discrete mode locations.}
        \label{fig:disp_relation}
    \end{figure}

\begin{remark} Note that in order to avoid aliasing, we consider modes $\sF w(\xi)=(\e^{ij\xi h})\in\Cn^\Zn$, %in~\eqref{eq:modes}
with $\real\xi\in (-\frac{\pi}{h},\frac{\pi}{h}]$. Letting
$z=2\cos(\xi h)$ and in light of~\eqref{eq:cos_rel} we then have that the discrete wavenumbers associated with the $(2p)$th-order scheme~\eqref{eqn:finitedifferenceoperator0} satisfy the dispersion relation
\begin{equation}\label{eq:dispersion_relation}
P_p(2\cos(\xi h);\lambda)=0,\quad -\frac{\pi}{h}< \real\xi \leq \frac{\pi}{h}.\end{equation}
In particular, for the classical second-order scheme we have $h^2P_1(2\cos(\xi_r h) =0$ from where we obtain the well-known dispersion relation:
$$
4\sin^2\left(\frac{\xi h}{2}\right)=\lambda h^2, \quad -\frac{\pi}{h}<\real\xi\leq \frac{\pi}{h}.
$$ 
To help visualize the locations of $\xi_r$ for higher-order schemes, we consider \Cref{fig:disp_relation} which displays the level curves of the complex function $f(\xi)=|\log \lf(P_p(2\cos(\xi_r h);\lambda)\rg)|$  within the domain  $\{z \in \Cn: -\frac{\pi}{h}\leq \real z\leq \frac{\pi}{h},-\frac{\pi}{h}\leq \imag z\leq \frac{\pi}{h}\}$ corresponding to the polynomials $P_p$, $p=1,\ldots,4$, defined in~\eqref{eq:exp_pol} for the values $h=0.1$ and $\lambda =25$. The locations of the discrete mode wavenumbers at $\xi_r$ and  $-\xi_r$ can be easily identified in these figures as $f$ develops singularities at those points. The vertical lines mark the location of the exact wavenumbers at $\pm\sqrt{\lambda}=\pm 5$ in this example. 
\end{remark}

\section{Discrete complex stretching}\label{sec:complex} 

This section presents the elements of discrete complex analysis necessary for the derivation of the high-order RDPML schemes. Before proceeding with the derivations, we need to introduce certain definitions from discrete complex analysis; see~\cite{Chern_2019,duffin1956basic}.

 Let $\Lambda$ be an infinite quadrilateral lattice with sets of vertices, edges, and quadrilateral faces given by 
 \begin{align*}
 \mathcal V(\Lambda)&=\mathbb Z^2,\\
     \mathcal E(\Lambda) & = \{((j, k), (j+1, k)) | (j, k) \in \mathbb{Z}^2\} \cup 
     \{((j, k), (j, k+1)) | (j, k) \in \mathbb{Z}^2\},\quad\text{and}\\
     \mathcal F(\Lambda) & = \left\{ ((j, k), (j+1, k), (j+1, k+1), (j, k+1))| 
     (j, k) \in \mathbb{Z}^2\right\},
 \end{align*}
 respectively, where $((j,k),(l,m))$ denotes the edge connecting the vertices at $(j,k)$ and $(l,m)$ and $((j, k), (j+1, k), (j+1, k+1), (j, k+1))$ denotes the face with vertices $(j, k)$, $(j+1, k)$, $(j+1, k+1)$, and $(j, k+1))$.
 
 A complex-valued function defined on $\Lambda$ is then a mapping $\mathsf F:\mathcal V(\Lambda) \rightarrow \mathbb{C}$, with $\mathsf F=(F_{j,k})$ for $(j,k)\in \mathcal V(\Lambda)=\mathbb{Z}^2$, i.e., we can characterize the mappping as $\sF F\in\mathbb C^{\mathbb Z\times\mathbb Z}$. In particular, we 
 can define a (discrete) complex domain $(\Lambda,\mathsf Z)$ through a mapping 
 $\mathsf Z:\mathcal V(\Lambda) \rightarrow \mathbb{C}$. This allows us to 
introduce the concept of discrete holomorphicity.
 \begin{definition}[Discrete holomorphicity] \label{def:discrete_holo}
A function $\mathsf F: \mathcal V(\Lambda)
     \rightarrow \mathbb{C}$ is said to be complex-differentiable 
     with respect to $\mathsf Z:\mathcal V(\Lambda) \rightarrow \mathbb{C}$ at a face $Q_{j+\frac12, k+\frac12}=((j, k), (j+1, k), (j+1, k+1), (j, k+1)) \in \mathcal F(\Lambda)$ if it satisfies
     the discrete Cauchy-Riemann equation:
     \begin{equation}\label{eqn:discrete_holo}
         \frac{F_{j+1, k+1}-F_{j, k}}{Z_{j+1,k+1}-Z_{j, k}} = 
         \frac{F_{j, k+1}-F_{j+1, k}}{Z_{j, k+1}-Z_{j+1, k}},\quad (j,k)\in \mathcal V(\Lambda),
     \end{equation}
     where $\mathsf F = (F_{j,k})$ and $\mathsf Z = (Z_{j,k})$.
     In which case we denote by $D_{\mathsf Z}\mathsf F(Q_{j+\frac12, k+\frac12})$  the discrete complex derivative of $\mathsf F$ with respect to $\mathsf Z$ at $Q_{j+\frac12,k+\frac12}\in\mathcal F(\Lambda)$ which corresponds to either of the quotients in~\eqref{eqn:discrete_holo}. Furthermore, the function $\mathsf F$ is said to be discrete-holomorphic if it is complex-differentiable with respect to every face $Q \in \mathcal F(\Lambda)$.
 \end{definition}

% \subsection{Coordinate stretching}
With this definition at hand, we can proceed with the construction of the \emph{discrete complex coordinate stretching} that will be used to derive the PML. Following~\cite{Chern_2019}, we begin by  extending  the one-dimensional grid  $\{x_{j}\}_{j\in\mathbb Z}\subset\mathbb R$ to a specific discrete complex domain $(\Lambda,\mathsf Z)$ with $\mathsf Z=(Z_{j,k}): \mathcal V(\Lambda) \mapsto 
\mathbb{C}$  defined by
\begin{equation}\label{eq:complex_dom}
    Z_{j, k}  = (j+k)h + i\frac{h}{\omega}\sum_{\ell=0}^{k-1}\sigma_\ell, \quad (j, k) \in \mathbb{Z}^2,
\end{equation}
where  the \textit{damping coefficient} is defined as  
\begin{equation}\label{eq:sigma_def}
    \sigma_j \geq 0\ \ \text{if}\ \  j\geq0\ \ \text{and}\ \  \sigma_j= 0 \ \ \text{if}\ \ j<0. 
\end{equation}
It is important to note that $\mathsf Z\in\mathbb C^{\mathbb Z\times\mathbb Z}$ is constructed in such a way that the \emph{path}
\begin{align}\label{eq:complex_path}
    Z_{0, j} & = jh + i\frac{h}{\omega}\sum_{\ell=0}^{j-1}\sigma_{\ell}, \quad j \in \mathbb Z,
\end{align}
satisfies  $Z_{0,j}=x_{j}=jh$ for $j<0$ and $\imag Z_{0,j}\geq0$ for $j\geq 0$. In other words, the portion of the path $Z_{0,j}$, $j<0$, lies on the (real) physical domain, while $Z_{0,j}$, $j\geq0$, lies on what we will consider our PML domain. Figure \ref{fig:complex_domain}  depicts the discrete complex domain $(\Lambda,\mathsf Z)$ as well as the stretching path. In particular, the variables involved in the derivation of the fourth-order scheme are depicted in that figure.

\begin{figure}[ht]
    \centering
    \includegraphics[width=0.9\linewidth]{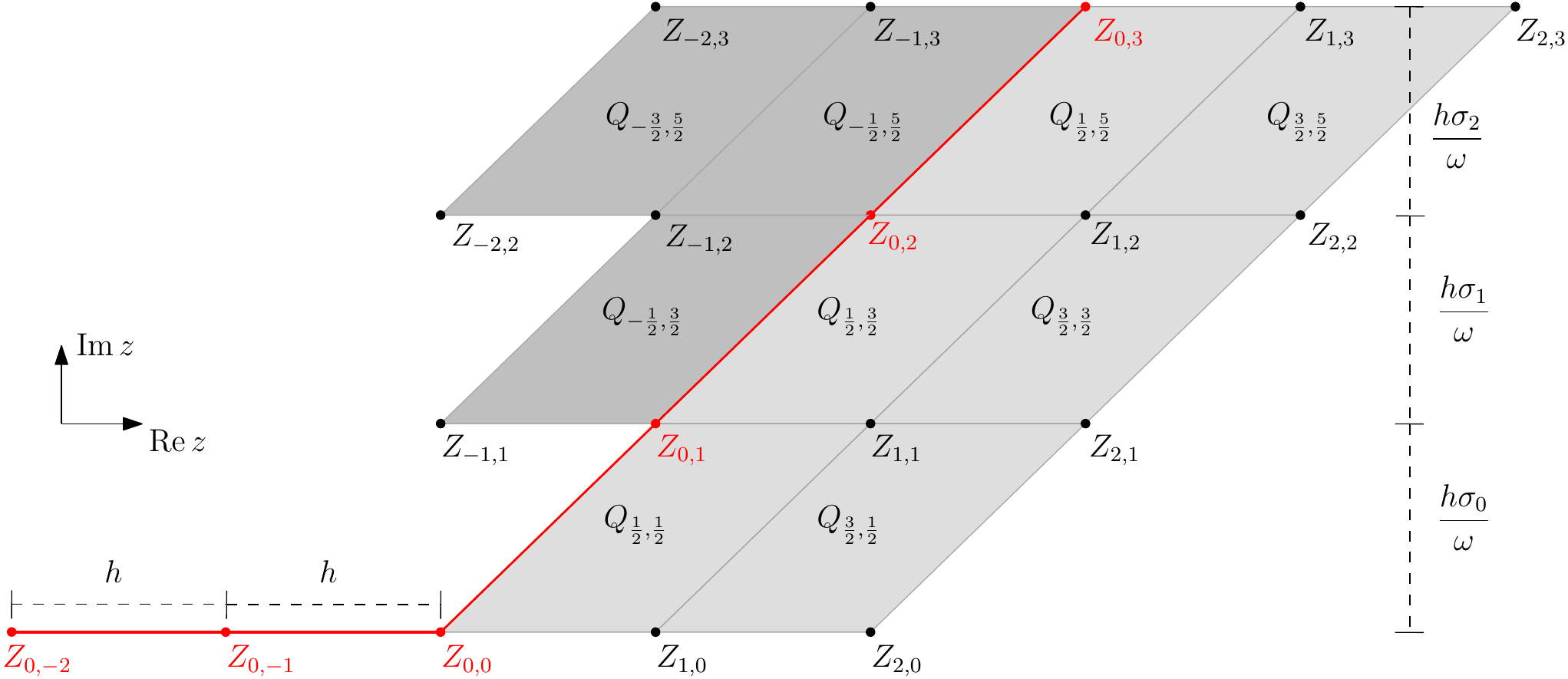}
    \caption{Depiction of the discrete complex domain $(\Lambda,\mathsf Z)$ defined by~\eqref{eq:complex_dom} and stretching path~\eqref{eq:complex_path}, which is marked by the red dots.}
    \label{fig:complex_domain}
\end{figure}

Analogous to the continuous setting, we now consider the holomorphic extension of solutions $\sF v\in\mathbb C^{\mathbb Z}$  of~\eqref{eq:disc_Helm} to the complex domain $(\Lambda,\mathsf Z)$ defined in~\eqref{eq:complex_dom}.  As pointed out by Chern~\cite{Chern_2019}, there are infinitely many ways to extend $\mathsf v\in\mathbb C^{\mathbb Z}$ to $\mathsf V\in\mathbb C^{\mathbb Z\times\mathbb Z}$ in such a way that $ 
 V_{0,j} = v_{j}$, $j<0$, and
\begin{equation}\label{eqn:discrete_extended}
\mathcal L_p[\mathsf V_{*,k}] +\omega^2\mathsf V_{*,k}= \mathsf 0,
   \quad 
     k \in \mathbb{Z},
\end{equation}
where $\mathsf V_{*,k}\in\mathbb C^{\mathbb Z}$ denotes the $k$th ``column" of $\mathsf V$. (For example, for any  $\{c_k\}_{k\in\Zn}\subset\Cn$ such that $c_0=1$ we have that $\sF V(\xi_r):\mathcal V(\Lambda)\to \Cn$, $\sF V(\xi_r): = (c_kw_j(\xi_r))$ is an extension of the mode $\sF w(\xi_r)$ defined in \Cref{thm:operator_sols} for $\xi_r=\xi_r(\omega^2)$ which satisfies~\eqref{eqn:discrete_extended}.) As it turns out, to make such an extension unique, it suffices to request  $\mathsf V$ to be discrete holomorphic in the sense of Definition~\ref{def:discrete_holo}. Indeed, the following theorem, whose proof can be found in~\cite{Chern_2019}, establishes the uniqueness of the holomorphic extension of the relevant modes of $\mathcal L_p$, and therefore the uniqueness of the extension of the general solution $\sF v\in E_{\lambda}$, $\lambda = \omega^2$, of~\eqref{eq:disc_Helm}.

\begin{lemma}
\label{thm:extension}
Let ${\sf w}(\xi)=(w_j(\xi))=(\e^{i\xi jh}):\mathbb Z\to \mathbb C$ with $-\pi/h<\real\xi\leq\pi/h$ be such that  $\mathcal L_p[\sf w]=-\lambda \sf w$ for some $\lambda>0$.  Then, there exists a unique discrete holomorphic 
    function ${\sf W}({\xi})=(W_{j,k}(\xi)): \mathcal V(\Lambda) \rightarrow \mathbb{C}$ such that  $\mathcal{L}_p[\sF W_{*,k}]=-\lambda {\sf W}_{*,k}$ for all $k\in\mathbb Z$, and 
    ${\sf W}_{*,0}(\xi) = {\sF w}(\xi)$. Moreover, 
    \begin{equation*}
        W_{j,k}(\xi) = 
        \begin{cases} 
        \displaystyle
        \left(\prod_{m=0}^{k-1}\rho(\sigma_m, \xi, \omega)
        \right)w_{j+k}(\xi), & \text{for}\quad k>0, \\
        w_{j+k}(\xi), & \text{for}\quad k<0
        \end{cases}
    \end{equation*}
    where 
    \begin{equation}\label{eq:rho}
        \rho(\sigma, \xi, \omega) := \frac{2+i\frac{\sigma}{\omega}
        (1-\e^{-i\xi h})}{2+i\frac{\sigma}{\omega}(1-\e^{i\xi h})}.
    \end{equation}
\end{lemma}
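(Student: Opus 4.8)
The plan is to prove existence by directly verifying the displayed formula and then to prove uniqueness by showing that the discrete Cauchy--Riemann equation~\eqref{eqn:discrete_holo}, combined with the column-wise eigenfunction constraint, determines the extension one column at a time. First I would make~\eqref{eqn:discrete_holo} explicit on the stretched lattice~\eqref{eq:complex_dom}: subtracting coordinates gives $Z_{j+1,k+1}-Z_{j,k}=2h+i\frac{h}{\omega}\sigma_k$ and $Z_{j,k+1}-Z_{j+1,k}=i\frac{h}{\omega}\sigma_k$, so that~\eqref{eqn:discrete_holo} for an extension $\mathsf W=(W_{j,k})$ reads, after cancelling $h$,
$$
i\frac{\sigma_k}{\omega}\left(W_{j+1,k+1}-W_{j,k}\right)=\left(2+i\frac{\sigma_k}{\omega}\right)\left(W_{j,k+1}-W_{j+1,k}\right),\qquad (j,k)\in\Zn^2.
$$
Writing $S$ for the index shift $(S\mathsf u)_j=u_{j+1}$, which commutes with $\mathcal L_p$, this is the column-to-column recurrence $A_k\mathsf W_{*,k+1}=B_k\mathsf W_{*,k}$ with $A_k=i\frac{\sigma_k}{\omega}S-\left(2+i\frac{\sigma_k}{\omega}\right)I$ and $B_k=i\frac{\sigma_k}{\omega}I-\left(2+i\frac{\sigma_k}{\omega}\right)S$.

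For existence I would insert the candidate $W_{j,k}=c_k\,w_{j+k}(\xi)$ with $c_k=\prod_{m=0}^{k-1}\rho(\sigma_m,\xi,\omega)$ for $k>0$ and $c_k=1$ for $k\le0$, and check the two required properties. The eigenfunction identity $\mathcal L_p[\mathsf W_{*,k}]=-\lambda\mathsf W_{*,k}$ is immediate, since $\mathcal L_p$ commutes with $S$ and $\mathsf w(\xi)$ is a $\lambda$-eigenfunction, so each rescaled shift $c_k\,w_{\cdot+k}(\xi)$ is again one. For discrete holomorphicity, substituting the candidate into the recurrence above and cancelling the nonvanishing factor $w_{j+k}(\xi)$ collapses the face identity to the single scalar relation $c_{k+1}/c_k=\rho(\sigma_k,\xi,\omega)$; solving that scalar equation is precisely what yields the expression~\eqref{eq:rho}, so the candidate satisfies~\eqref{eqn:discrete_holo} by construction. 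Finally, \eqref{eq:sigma_def} forces $\sigma_m=0$ and hence $\rho(\sigma_m,\xi,\omega)=1$ for $m<0$, which recovers $W_{j,k}=w_{j+k}(\xi)$ for $k<0$ and the prescribed column $\mathsf W_{*,0}=\mathsf w(\xi)$.

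The uniqueness step is the main obstacle. Every admissible extension has all of its columns in the finite-dimensional space $E_\lambda$ of~\Cref{thm:operator_sols}, on which $S$ acts with eigenvalues $e^{\pm i\xi_r h}$; since $A_k$ and $B_k$ are polynomials in $S$, they map $E_\lambda$ into itself, so the recurrence $A_k\mathsf W_{*,k+1}=B_k\mathsf W_{*,k}$ is well posed inside $E_\lambda$ and determines $\mathsf W_{*,k+1}$ uniquely as soon as $A_k$ is invertible there. The only way $A_k|_{E_\lambda}$ can be singular is that $2+i\frac{\sigma_k}{\omega}\left(1-e^{i\xi_r h}\right)=0$ for some mode $\xi_r$, i.e.\ the denominator of $\rho$ in~\eqref{eq:rho} vanishes; ruling this out is exactly the delicate point, and I would do so from $\sigma_k\ge0$ (for real $\xi_r$ the imaginary and real parts cannot vanish simultaneously, and one checks the evanescent modes separately). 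Granting invertibility, the line $\sspan\{\mathsf w(\xi)\}$ is invariant under $A_k$ and $B_k$ because $\mathsf w(\xi)$ is an $S$-eigenvector, so the explicit column $c_{k+1}\mathsf w(\xi)$ built above is the unique element of $E_\lambda$ solving the step. Propagating from $\mathsf W_{*,0}=\mathsf w(\xi)$ upward by inverting $A_k$ and downward by inverting $B_k$ (where $\sigma_k=0$ makes the step a pure shift, $A_k=-2I$, $B_k=-2S$), every column is forced, which establishes uniqueness and identifies $\mathsf W$ with the stated formula.
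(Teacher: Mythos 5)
The paper offers no proof of this lemma of its own: it defers entirely to Chern's original article, which treats the second-order case $p=1$. So your attempt has to be judged on its own merits rather than against an in-paper argument. Your existence half is complete and correct: the face differences $Z_{j+1,k+1}-Z_{j,k}=2h+i\tfrac{h}{\omega}\sigma_k$ and $Z_{j,k+1}-Z_{j+1,k}=i\tfrac{h}{\omega}\sigma_k$ are right, the reduction of \eqref{eqn:discrete_holo} to the column recurrence $A_k\mathsf W_{*,k+1}=B_k\mathsf W_{*,k}$ is right, and substituting $W_{j,k}=c_k w_{j+k}(\xi)$ does collapse the face identity to $c_{k+1}/c_k=\rho(\sigma_k,\xi,\omega)$ with $\rho$ exactly as in \eqref{eq:rho}; the convention $\sigma_m=0$ for $m<0$ then gives the pure shift for $k<0$ and the prescribed column at $k=0$.

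The genuine gap is in uniqueness, at precisely the point you flag but do not close. Your argument requires $A_k=i\tfrac{\sigma_k}{\omega}S-\bigl(2+i\tfrac{\sigma_k}{\omega}\bigr)I$ to be injective on the \emph{whole} solution space $E_\lambda$ of \Cref{thm:operator_sols}, not just on $\operatorname{span}\{\mathsf w(\xi)\}$: if some other admissible mode $\mathsf w(\eta)$, $\eta\in\{\pm\xi_1,\dots,\pm\xi_p\}$, satisfied $2+i\tfrac{\sigma_k}{\omega}(1-\e^{i\eta h})=0$, one could add a multiple of it to column $k+1$ and uniqueness would fail. For real $\eta$ your parenthetical works: the imaginary part forces $\sigma_k(1-\cos(\eta h))=0$, which also kills the $\sin$ term in the real part and leaves $2=0$, a contradiction; and for $p=1$ in the resolved regime these are the only modes, which is why Chern's proof closes. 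But for $p\geq 2$ the space $E_\lambda$ contains spurious complex modes (cf.\ Table~\ref{stab:exact_wavenumbers}), and there the kernel condition reads $\e^{i\eta h}=1-2i\omega/\sigma_k$, which is \emph{not} ruled out by $\sigma_k\geq 0$ alone --- it is a nondegeneracy condition relating $(\sigma_k,\omega,h)$ to the roots of $P_p(\cdot\,;\lambda)$ that must either be proved impossible or stated as a hypothesis. You also implicitly assume the generic case of \Cref{thm:operator_sols} (distinct roots with $z_r\neq\pm 2$); the Jordan-block cases are in fact harmless, since the restriction of $A_k$ is then triangular with the nonzero diagonal entries $-2$ or $-(2+2i\sigma_k/\omega)$, but this should be said. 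The remaining structure --- linearity reducing uniqueness to a kernel question, and downward propagation via $B_k=-2S$ where $\sigma_k=0$ --- is sound.
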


The following results establish that all physically meaningful solutions of the discretized time-harmonic wave equation~\eqref{eq:disc_Helm} exhibit geometric decay when restricted to the stretching path. Physically relevant modes refer specifically to non-constant solutions $\mathsf v=(v_j)$, $j\in\Z$, of~\eqref{eq:disc_Helm} that remain bounded within the physical domain, i.e., $|v_j|\leq c$ for all $j\in\Z$, $j<0$, where $c>0$ is a constant. Based on \Cref{thm:operator_sols}, such solutions can be represented as $\mathsf v=\mathsf w(\xi)$, $\xi\in\Xi$, where $\Xi$ is defined as
\begin{equation*}%\label{eq:mode_set}
\Xi = \left\{\xi\in\C: -\frac{\pi}h< \real\xi\leq \frac{\pi}h,\ \imag\xi\leq 0\right\}\setminus\{0\}. 
\end{equation*}
These physically admissible time-harmonic solutions $\mathsf w(\xi)\e^{-i\omega t}$ encompass constant-amplitude left- and right-propagating modes ($\omega\real\xi\neq 0$, $\imag \xi=0$) as well as modes with exponentially growing amplitude ($\imag\xi<0$).

\begin{lemma}
\label{thm:rho_decay}
For every
$\sigma > 0$, $\omega \in \mathbb{R}$, $\omega \not = 0$, and 
$\xi\in\Xi$, we 
have that the geometric decay rate $\rho$, defined in~\eqref{eq:rho}, satisfies $|\rho(\sigma, \xi, \omega)| < 1$.
\end{lemma}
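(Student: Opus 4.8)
The plan is to reduce the claim $|\rho(\sigma,\xi,\omega)|<1$ to an elementary comparison of two moduli and to isolate the single sign condition on which it hinges. Writing $a:=\sigma/\omega$ and
\[
N := 2 + ia\,(1-\e^{-i\xi h}), \qquad D := 2 + ia\,(1-\e^{i\xi h}),
\]
so that $\rho=N/D$, I would first record the reflection symmetry $N(\xi)=D(-\xi)$. Hence $|\rho(\xi)|<1$ is equivalent to $|D(-\xi)|<|D(\xi)|$, and the statement becomes a monotonicity property of the single scalar $\xi\mapsto|D(\xi)|$ under $\xi\mapsto-\xi$.

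Next I would parametrize the exponential by its modulus and phase. Since $\imag\xi\leq 0$, setting $r:=\e^{-h\,\imag\xi}\geq 1$ and $\theta:=h\,\real\xi\in(-\pi,\pi]$ gives $\e^{i\xi h}=r\e^{i\theta}$ and $\e^{-i\xi h}=r^{-1}\e^{-i\theta}$. A direct expansion yields $|D(\xi)|^2 = 4 + 4ar\sin\theta + a^2(1-2r\cos\theta+r^2)$, and replacing $(r,\theta)$ by $(r^{-1},-\theta)$ produces $|N(\xi)|^2=|D(-\xi)|^2$. Subtracting, the mixed terms telescope into the key identity
\[
|D|^2-|N|^2 = 4a\sin\theta\,\bigl(r+r^{-1}\bigr) + a^2\bigl(r-r^{-1}\bigr)\bigl(r+r^{-1}-2\cos\theta\bigr).
\]

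From here the right-hand side splits into a manifestly nonnegative part and a part carrying the sign information. Because $r\geq 1$ we have $r-r^{-1}\geq 0$ and $r+r^{-1}\geq 2\geq 2\cos\theta$, so the second term is always $\geq 0$, and strictly positive whenever $\imag\xi<0$ (the evanescent/growing modes), which already settles that case. The first term equals $4(r+r^{-1})\,\tfrac{\sigma}{\omega}\sin(h\,\real\xi)$, so its sign is governed entirely by $\tfrac{\sigma}{\omega}\sin(h\,\real\xi)$; on the purely propagating set $\imag\xi=0$ (where $r=1$ and the second term vanishes) the identity collapses to $|D|^2-|N|^2=8\tfrac{\sigma}{\omega}\sin(h\,\real\xi)$, so the conclusion rests exactly on this quantity being positive.

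The main obstacle is therefore to control the sign of $\tfrac{\sigma}{\omega}\sin(h\,\real\xi)$, i.e.\ to rule out the backward branch on which it is negative (for such $\xi$ one checks directly from the identity that $|\rho|>1$). This is precisely the point at which physical admissibility of the mode must be invoked: the modes that genuinely penetrate the layer are those whose energy travels into the PML, namely those with $\omega\,\real\xi>0$, which for $|\real\xi|<\pi/h$ and $\sigma>0$ is equivalent to $\tfrac{\sigma}{\omega}\sin(h\,\real\xi)>0$; for these the first term is strictly positive and the bound follows. I would close by handling the degenerate endpoint $\real\xi=\pi/h$ (the Nyquist mode, where $\sin(h\,\real\xi)=0$ and the estimate would only be nonstrict) separately, using the dispersion relation~\eqref{eq:dispersion_relation} to argue it does not occur among the relevant modes. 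I expect correctly establishing and invoking this forward-propagation sign condition, rather than the routine algebra leading to the displayed identity, to be the delicate part of the argument.
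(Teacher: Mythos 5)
Your proof is correct, and it takes a genuinely different route from the paper's. The paper proceeds by cases: for real $\xi$ it differentiates $\varrho(\xi)=|\rho(\sigma,\xi,\omega)|$, tracks the sign of $q(\xi)=\omega(\sigma^2-(\sigma^2+2\omega^2)\cos(\xi h))$, and combines the endpoint values $\varrho(0)=\varrho(\pi/h)=1$ with a ``the derivative changes sign at most once'' argument; for $\real\xi=0$, $\imag\xi<0$ it reduces to $\cosh(\alpha h)>1$; and for $\real\xi\neq0$, $\imag\xi<0$ it defers entirely to Chern's paper. Your single identity
\[
|D|^2-|N|^2=4\tfrac{\sigma}{\omega}\sin\theta\,\bigl(r+r^{-1}\bigr)+\tfrac{\sigma^2}{\omega^2}\bigl(r-r^{-1}\bigr)\bigl(r+r^{-1}-2\cos\theta\bigr),
\qquad r=\e^{-h\,\imag\xi},\ \theta=h\,\real\xi,
\]
subsumes all three cases at once, is self-contained, and replaces the calculus detour with one line of algebra (on the propagating set it collapses to $8\tfrac{\sigma}{\omega}\sin\theta$, making the paper's monotonicity argument unnecessary). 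Your approach also buys transparency about an over-claim in the lemma as stated: the set $\Xi$ contains backward modes ($\omega\,\real\xi<0$ with $\imag\xi=0$, and small $\imag\xi<0$ perturbations thereof) for which your identity shows $|\rho|>1$, so the conclusion genuinely requires the forward-propagation restriction $\omega\,\real\xi>0$ (or $\real\xi=0$, $\imag\xi<0$) that the paper imposes only implicitly through its case split $0<\mathrm{sgn}(\omega)\,\real\xi<\pi/h$; likewise you correctly isolate the Nyquist point $\real\xi=\pi/h$, $\imag\xi=0$, where $|\rho|=1$, which the paper silently excludes by a strict inequality. In short, both arguments prove the same (restricted) statement, but yours is more elementary, avoids the external reference, and makes the necessary hypotheses explicit.
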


\begin{proof}
    Once again, we refer to \cite{Chern_2019} for the proof in the case when $ \real\xi \not = 0$ and $\imag\xi < 0$, and we only complete it by considering the additional cases:  $0<{\rm sgn}(\omega)\real \xi< \pi/h$, $\imag \xi = 0$, and  $\real \xi = 0$ and $\imag \xi< 0$.  
    
    In the first case, when ${\rm sgn}(\omega)\xi\in(0,\pi/h)$ we define
    $ \varrho( \xi) = |\rho(\sigma, \xi, \omega)|$. A direct calculation of the derivative of $\varrho$ yields
    \begin{align}
        \frac{\de \varrho}{\de \xi}(\xi) = 
        \frac{-(2h\sigma\omega(-\sigma^2 + (\sigma^2+2
        \omega^2)\cos(\xi h)))}{(\sigma^2+2\omega^2-
        \sigma^2\cos(\xi h)+2\sigma\omega 
        \sin(\xi h))^2\sqrt{\frac{\sigma^2+2\omega^2-\sigma^2\cos(\xi h)-2\sigma\omega\sin(\xi h)}{\sigma^2+2\omega^2-\sigma^2\cos(\xi h)+2\sigma\omega\sin(\xi h)}}}. \label{eq:der_rho}      
    \end{align}
    Clearly, the sign of the derivative depends only on the sign of
    \begin{align}
        q( \xi) = \omega (\sigma^2 - (\sigma^2 + 2\omega^2)\cos(\xi h)), \nonumber
    \end{align}
    and the derivative is well-defined since the denominator in~\eqref{eq:der_rho} does not
    vanish for  $\omega \neq 0$ and $\xi \in \mathbb{R}$. We then distinguish two cases, namely:
    \begin{enumerate}
        \item[a)] $\omega > 0$, $ \xi \in (0, \pi/h)$: We have 
         $\varrho( 0) = \varrho(\pi/h) 
        = 1$ and $q(0) < 0$. Since $\cos(\xi h)$ is 
        monotonically decreasing for  $\xi\in(0, \pi/h)$, $q$ is monotonically increasing in that interval. This shows that $\de\varrho/\de\xi$ changes sign at most once over the interval $\xi\in(0,\pi/h)$, which allows us to conclude that
        $ \varrho(\xi) < 1$ for all $\xi \in (0, \pi/h)$. 

        \item[b)] $\omega < 0$, $\xi \in (-\pi/h, 0)$:  We have
         $\varrho( -\pi/h) = \varrho( 0) = 1$, 
        and $q(-\pi/h) < 0$. Since $\cos(\xi h)$ is monotonically increasing 
        in this interval  and $\omega<0$, then so is $q(\xi)$. Therefore, $\varrho(\sigma,\xi,\omega) < 1 $ for 
        $\xi \in (-\pi/h, 0)$.
    \end{enumerate}

    Finally, when $\real \xi = 0$, $\imag \xi < 0$, we set $\alpha = -\imag (\xi) > 0$. Then $\rho$ can be rewritten as 
        \begin{align}
            \rho(\sigma, \xi, \omega) = \frac{2 + i\frac{\sigma}{\omega}(1-
            \e^{-\alpha h})}{2 + i\frac{\sigma}{\omega}(1-\e^{\alpha h})}.
            \nonumber
        \end{align}
        Showing that $|\rho| < 1$ is then equivalent to show that $  |1-\e^{-\alpha h}| < |1-\e^{\alpha h}|$ which for $\alpha > 0$ is equivalent to
        \begin{align}
            1 < \frac{\e^{\alpha h} + \e^{-\alpha h}}{2} = \cosh(\alpha h) \nonumber
        \end{align}
    which is in turn true for all $\alpha > 0$, regardless the sign of $\omega$.
    This concludes the proof.
\end{proof}

The next and final theorem of this section summarizes the theoretical results regarding the discrete PML: 
\begin{theorem}\label{thm:reflectionless} Let $\mathsf u(t) = (u_j(t)) = 
\e^{-i\omega t}\e^{i\xi hj}$  for $h>0$, $\xi \in \Xi$ 
and $\omega \in \mathbb{R} \setminus \{0\}$, be a right-traveling wave that satisfies the semi-discrete wave equation:
\begin{align}
    \frac{\de^2 \mathsf u}{\de t^2} = \mathcal{L}_p[\mathsf u].
    \label{eqn:wave}
\end{align}
Then $\mathsf u(t):\Z\to\C$ can be uniquely extended 
to a discrete holomorphic function $\mathsf U(t):  
\Z^2 \to \mathbb{C}$ whose $k$th-column  satisfies
\begin{align}
    \frac{\de^2 \mathsf U_{*, k}}{\de t^2} = 
    \mathcal{L}_p[\mathsf U_{*, k}].
    \label{eqn:extended_wave}
\end{align}
Moreover, the restriction $U_{0,j}$ of $\mathsf U$ to the discrete complex path~\eqref{eq:complex_path} is given explicitly by
$$U_{0, j}(t) = A(j)u_j(t),\quad j\in\Z,$$
where
\begin{align}
    A(j) = \begin{cases}
        1, & j \leq 0\\
        \prod_{l=0}^{j-1}\rho(\sigma_j, \xi, \omega), & 
        j > 0
    \end{cases} ,\qquad  \rho(s, \xi, \omega) = \frac{2 + \frac{is}{\omega}
    (1-\e^{-i\xi h})}{2 + \frac{is}{\omega}(1-\e^{i\xi h})},
    \nonumber
\end{align}
with $|\rho(s, \xi, \omega)| < 1$ for $s > 0$, and $\omega \not = 0$.
\end{theorem}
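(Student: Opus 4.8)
The plan is to reduce this time-dependent statement to the two frequency-domain lemmas already in hand, exploiting the fact that $\mathsf u$ is monochromatic. First I would observe that $\mathsf u(t)=\e^{-i\omega t}\mathsf w(\xi)$ with $\mathsf w(\xi)=(\e^{i\xi hj})$, so that substituting into the semi-discrete wave equation~\eqref{eqn:wave} and using $\frac{\de^2}{\de t^2}\e^{-i\omega t}=-\omega^2\e^{-i\omega t}$ gives $\mathcal L_p[\mathsf w(\xi)]=-\omega^2\mathsf w(\xi)$. Hence $\mathsf w(\xi)$ is exactly a mode of the type treated in \Cref{thm:extension} with eigenvalue $\lambda=\omega^2$; since $\omega\in\R\setminus\{0\}$ we have $\lambda>0$, and since $\xi\in\Xi$ by hypothesis, all the standing assumptions are met.

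Next I would invoke \Cref{thm:extension} to obtain the unique discrete-holomorphic $\mathsf W(\xi)=(W_{j,k}(\xi))$ whose columns satisfy $\mathcal L_p[\mathsf W_{*,k}]=-\omega^2\mathsf W_{*,k}$ and whose $0$th column equals $\mathsf w(\xi)$, and then set $\mathsf U(t):=\e^{-i\omega t}\mathsf W(\xi)$, i.e.\ $U_{j,k}(t)=\e^{-i\omega t}W_{j,k}(\xi)$. Two properties then need checking, and both are immediate because the scalar factor $\e^{-i\omega t}$ is constant in $(j,k)$: it cancels from both quotients in the discrete Cauchy--Riemann equation~\eqref{eqn:discrete_holo}, so $\mathsf U(t)$ inherits discrete holomorphicity from $\mathsf W(\xi)$; and applying $\de^2/\de t^2$ columnwise together with $\mathcal L_p[\mathsf W_{*,k}]=-\omega^2\mathsf W_{*,k}$ produces the column wave equation~\eqref{eqn:extended_wave}. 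Moreover $U_{j,0}(t)=\e^{-i\omega t}w_j(\xi)=u_j(t)$, so $\mathsf U$ restricts to $\mathsf u$ on column $0$ as required. The explicit path formula is then read off directly from the closed form of $W_{j,k}$ in \Cref{thm:extension} evaluated at first index $j=0$: for $k\le 0$ one gets $U_{0,k}(t)=\e^{-i\omega t}w_k(\xi)=u_k(t)$, while for $k>0$ the product $\prod_{m=0}^{k-1}\rho(\sigma_m,\xi,\omega)$ appears, which upon relabelling the column index as the path index $j$ is precisely $A(\cdot)$.

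The decay bound $|\rho(s,\xi,\omega)|<1$ for $s>0$, $\omega\ne0$ and $\xi\in\Xi$ is exactly \Cref{thm:rho_decay}, so nothing new is required there. The one point deserving care is the \emph{uniqueness} assertion: an admissible extension is not, a priori, assumed time-harmonic, so I would argue that discrete holomorphicity together with the columnwise equation~\eqref{eqn:extended_wave} propagates the fixed $0$th column across all columns through the same recursion that underlies \Cref{thm:extension}, thereby forcing the separated form $\e^{-i\omega t}\mathsf W$ and reducing uniqueness to that already established for $\mathsf W$. I expect this reduction---making precise that the time-harmonic dependence is \emph{preserved} by the extension process rather than merely postulated---to be the only genuine obstacle; the remaining steps are bookkeeping that transports \Cref{thm:extension,thm:rho_decay} from the frequency domain back to the semi-discrete time domain.
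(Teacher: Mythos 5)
Your proposal is correct and follows essentially the same route as the paper, whose entire proof consists of the single sentence that the result ``follows directly from \Cref{thm:rho_decay,thm:extension}''---i.e., exactly the reduction to the frequency-domain extension lemma and the decay lemma that you carry out explicitly. The one point where you go beyond the paper is your closing remark on uniqueness (that an admissible extension is not a priori time-harmonic); the paper does not address this subtlety at all, so your observation is a legitimate refinement rather than a deviation.
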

\begin{proof} The proof follows directly from \Cref{thm:rho_decay,thm:extension}.
\end{proof}
\begin{remark}\label{rem:reflectionless_property}
Note that the fact that amplitude factor $A$ satisfies $A(j)=1$ for $j\leq 1$, encodes the reflectionless property of the RDPML. It demonstrates that the RDPML solution aligns perfectly with the exact discrete solution on the physical domain grid.
\end{remark}
\begin{remark}\label{rem:afterThrm34}
    Note that equations (\ref{eqn:wave}) and 
    (\ref{eqn:extended_wave}) coincide in the physical domain as the identity $Z_{0, j} = Z_{j, 0}$ holds for all $j<0$. Therefore, in particular, we have that $u_j=U_{0, j}$ for $j<0$. This means that any solution $\mathsf u$
    of the discretized wave equation \eqref{eqn:wave} is uniquely extended 
    to a holomorphic solution $\mathsf U$ of \eqref{eqn:extended_wave} which decays 
    geometrically in the PML region, i.e., when it is evaluated along the stretching path~$Z_{0, j}$.
\end{remark}

Having proven the geometric decay of the PML, in the next section 
we explain how to explicitly derive the equations for this unique 
holomorphic extension.

\section{RDPML equations in one dimension\label{sec:PML_eqs}}
We are now in a position to derive explicit equations for the unique holomorphic extension $\mathsf V$ of the frequency-domain solution $\sF v$ in~\eqref{eq:disc_Helm}, along the discrete complex path~\eqref{eq:complex_path}. To derive the desired PML equations, we start off by writing the difference equation~\eqref{eqn:discrete_extended} for $\sF V$ in terms of its restriction $\tilde{\mathsf v}=(V_{0,j}):\Zn\to \Cn$ to the path $(Z_{0,j}):\Z\to\C$ introduced in~\eqref{eq:complex_path}. 

Since~$\sF V$ is discrete holomorphic, it follows from Definition~\ref{def:discrete_holo} that its complex derivatives $D_{\sF Z} \sF V$ on the faces $Q_{-r+\frac12, j+\frac12}$ and $Q_{r-\frac12, j-\frac12}$, $r=1,\ldots,p$, are respectively given by 
\begin{subequations}\label{eqn:derivatives}
\begin{align}
D_{\sF Z} \sF V\lf(Q_{-r+\frac12,j+\frac12}\rg)&:=i\omega\hat{\Phi}^{(r)}_{j}  = \frac{ V_{-r+1, j+1}- V_{-r, j}}{Z_{-r+1,j+1}-Z_{-r, j}} = \frac{V_{-r+1, j+1} -  V_{-r, j}}{2h + i\frac{\sigma_j h}{\omega}} \label{eqn:derivatives_a}\\
D_{\sF Z} \sF V\lf(Q_{r-\frac12,j-\frac12}\rg)&:=i\omega\hat{\Psi}^{(r)}_{j}   = \frac{V_{r, j}-V_{r-1, j-1}}{Z_{r,j}-Z_{r-1, j-1}} = \frac{V_{r, j} - V_{r-1, j-1}}{2h + i\frac{\sigma_{j-1} h}{\omega}}\label{eqn:derivatives_b}
\end{align}
\end{subequations} for $j\geq 0$. This leads to the definition of the auxiliary discrete complex functions $\hat{\bol\Phi}^{(r)}=
(\hat\Phi^{(r)}_{j}):\Z^+_0\to\C$ and $\hat{\bol\Psi}^{(r)}=(\hat\Psi^{(r)}_{j}):\Z^+_0\to\C$ that, as we will see, play an important role in the derivation of the discrete PML equations. The next proposition presents expressions for the values of $\sF V$ appearing in \eqref{eqn:discrete_extended} in terms of $\tilde {\sF v}$ and $\hat{\boldsymbol \Phi}^{(r)}$ and $\hat {\bol\Psi}^{(r)}$, as well as the discrete PML equations.

\begin{proposition}\label{prop:identities}
Let $\tilde {\sF v}=(\tilde v_j)=(V_{0,j})$ where ${\sf V}=(V_{j,k}):\Z^2\to \C$ satisfies~\eqref{eqn:discrete_extended} and is discrete holomorphic with respect to ${\sf Z}:\Z^2\to \C$ defined in~\eqref{eq:complex_dom}. Then
\begin{subequations}\label{eqn:identities_u_r}
\begin{align}
    V_{-r, j} & = \tilde v_{j-r} - 
    h\sum_{\ell=1}^r\sigma_{j-\ell}\hat \Phi^{(r+1-\ell)}_{j-\ell},
    \label{eqn:identities_u_r:a}\\
    V_{r, j} & = \tilde v_{j+r} + h\sum_{\ell=1}^r 
    \sigma_{j+\ell-1}\hat 
    \Psi^{(r+1-\ell)}_{j+\ell}, \label{eqn:identities_u_r:b}
\end{align}
for $j\in \mathbb Z$ and $r=1,\ldots,p$, where $\sigma_j$ is defined in~\eqref{eq:sigma_def}.
\end{subequations}
Furthermore, $\tilde {\sF v}$, $\hat {\boldsymbol \Phi}^{(r)}=
(\hat\Phi^{(r)}_{j})$, and $\hat{\boldsymbol \Psi}^{(r)}=(\hat\Psi^{(r)}_{j})$ satisfy the following system of equations:
\begin{subequations}\label{eqn:PML1Dfreq}
\begin{align}
\label{eqn:PML1Dfreq:a}
  -\omega^2\tilde  v_j & =  \mathcal L_p[\tilde {\sf v} ]_{j} + h\sum_{r=1}^{p}\sum_{\ell=1}^{r}\left(a_{r} \sigma_{j+\ell-1}\hat{\Psi}^{(r+1-\ell)}_{j+\ell} - a_{-r}\sigma_{j-\ell}\hat{\Phi}^{(r+1-\ell)}_{j-\ell}\right),\\
\label{eqn:PML1Dfreq:b}
i\omega \hat \Phi^{(r)}_{j} & = \frac{\tilde  v_{j-r+2} - \tilde  v_{j-r}}{2h} + 
    \frac{\sigma_j \hat \Phi^{(r)}_{j}+ \sigma_{j-1}\hat \Phi^{(r)}_{j-1}}{2} + 
    \sum_{\ell=1}^{r-1}\frac{\sigma_{j-1-\ell}\hat \Phi^{(r-\ell)}_{j-1-\ell} - 
    \sigma_{j+1-\ell}\hat \Phi^{(r-\ell)}_{j+1-\ell}}{2}, \\
\label{eqn:PML1Dfreq:c}
    i\omega \hat \Psi^{(r)}_{j} & = \frac{\tilde  v_{j+r} - \tilde v_{j+r-2}}{2h} + 
    \frac{\sigma_{j-1}\hat \Psi^{(r)}_{j} + \sigma_{j}\hat \Psi^{(r)}_{j+1}}{2} + 
    \sum_{\ell=1}^{r-1}\frac{\sigma_{j+\ell}\hat \Psi^{(r-\ell)}_{j+\ell+1} - 
    \sigma_{j+\ell-2}\hat \Psi^{(r-\ell)}_{j+\ell-1}}{2},
\end{align}
\end{subequations}
with the difference operator $\mathcal L_p$  defined in~\eqref{eqn:finitedifferenceoperator} and the discrete complex derivatives  $\hat\Phi^{(r)}_{j}$ and $\hat\Psi^{(r)}_{j}$  defined in~\eqref{eqn:derivatives}.
\end{proposition}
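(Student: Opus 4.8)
The plan is to deduce everything from a single consequence of discrete holomorphicity, namely a closed-form expression for the \emph{anti-diagonal} difference of $\sF V$ across each face, and then to propagate it toward the stretching path by induction on $r$. First I would record the two lattice increments needed: from \eqref{eq:complex_dom} one finds, for every face $Q_{j+\frac12,k+\frac12}$, that $Z_{j+1,k+1}-Z_{j,k}=2h+i\tfrac{h}{\omega}\sigma_k$ and $Z_{j,k+1}-Z_{j+1,k}=i\tfrac{h}{\omega}\sigma_k$. Substituting these into the discrete Cauchy--Riemann equation \eqref{eqn:discrete_holo} and equating the common value $D_{\sF Z}\sF V(Q_{j+\frac12,k+\frac12})$ of the two difference quotients yields the key relation $V_{j,k+1}-V_{j+1,k}=i\tfrac{h}{\omega}\sigma_k\,D_{\sF Z}\sF V(Q_{j+\frac12,k+\frac12})$. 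Specializing the face to those appearing in \eqref{eqn:derivatives_a} and \eqref{eqn:derivatives_b} and using $i\omega\cdot i\tfrac{h}{\omega}=-h$ gives, after a shift of indices, the one-step recurrences $V_{-r,j}=V_{-r+1,j-1}-h\sigma_{j-1}\hat\Phi^{(r)}_{j-1}$ and $V_{r,j}=V_{r-1,j+1}+h\sigma_{j}\hat\Psi^{(r)}_{j+1}$, each of which moves one lattice node closer to the path $(0,\cdot)$ along an anti-diagonal.

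Next I would establish the identities \eqref{eqn:identities_u_r:a}--\eqref{eqn:identities_u_r:b} by telescoping these recurrences. Iterating the first recurrence $r$ times, decreasing the superscript and the second index in lockstep, collapses $V_{-r,j}$ onto $V_{0,j-r}=\tilde v_{j-r}$ while accumulating exactly the sum $-h\sum_{\ell=1}^{r}\sigma_{j-\ell}\hat\Phi^{(r+1-\ell)}_{j-\ell}$; the second recurrence is handled identically and produces \eqref{eqn:identities_u_r:b}. This is a short induction on $r$, where the only care required is matching the shifting superscript $(r+1-\ell)$ with its corresponding node index.

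Finally I would derive the three PML equations. Equation \eqref{eqn:PML1Dfreq:a} follows by evaluating the extended Helmholtz equation \eqref{eqn:discrete_extended} at the path node, i.e.\ reading off $\sum_{r=-p}^{p}a_r V_{r,j}+\omega^2\tilde v_j=0$, and then replacing each off-path value $V_{\pm r,j}$ via the identities just proved: the $\tilde v$-terms reassemble into $\mathcal L_p[\tilde{\sF v}]_j$ (cf.\ \eqref{eqn:finitedifferenceoperator}), while the $\hat\Phi,\hat\Psi$-terms collect into the stated double sum. For \eqref{eqn:PML1Dfreq:b} and \eqref{eqn:PML1Dfreq:c} I would return to the definitions \eqref{eqn:derivatives_a}--\eqref{eqn:derivatives_b}, substitute the identities for the two nodal values in each numerator (using the identity at levels $r$ and $r-1$), split off the $\ell=1$ term of the resulting $\hat\Phi$/$\hat\Psi$ sum, re-index the remainder, and move the self-interaction term produced by the imaginary part of the denominator; here the cancellation $i\omega\cdot i\tfrac{\sigma_j h}{\omega}=-h\sigma_j$ is precisely what converts the complex stretching into the real half-weight $\tfrac12\sigma_j\hat\Phi^{(r)}_j$ on the right-hand side, combining with the split-off term to give $\tfrac12(\sigma_j\hat\Phi^{(r)}_j+\sigma_{j-1}\hat\Phi^{(r)}_{j-1})$.

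The main obstacle is purely combinatorial bookkeeping: keeping the superscript $(r+1-\ell)$, the damping subscript on $\sigma$, and the nodal index mutually consistent through the telescoping of Step~2 and the re-indexing of Step~3, and correctly isolating the $\ell=1$ boundary term in \eqref{eqn:PML1Dfreq:b}--\eqref{eqn:PML1Dfreq:c}. No analytic difficulty arises beyond the discrete Cauchy--Riemann relation of \Cref{def:discrete_holo}; once the anti-diagonal identity is in hand, everything else is algebra.
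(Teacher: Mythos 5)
Your proposal is correct and follows essentially the same route as the paper: the one-step recurrences $V_{-r,j}=V_{-r+1,j-1}-h\sigma_{j-1}\hat\Phi^{(r)}_{j-1}$ and $V_{r,j}=V_{r-1,j+1}+h\sigma_{j}\hat\Psi^{(r)}_{j+1}$ extracted from the discrete Cauchy--Riemann equation are exactly the induction steps used in the paper's proof of \eqref{eqn:identities_u_r}, and your derivation of \eqref{eqn:PML1Dfreq:a}--\eqref{eqn:PML1Dfreq:c} by substituting those identities into \eqref{eqn:discrete_extended} and into the definitions \eqref{eqn:derivatives} (with the split-off $\ell=1$ term and the cancellation $i\omega\cdot i\sigma_j h/\omega=-h\sigma_j$) is precisely the paper's computation. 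The only cosmetic difference is that you package the anti-diagonal relation as a standalone lemma and telescope, whereas the paper runs the induction directly.
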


\begin{proof}
 We first prove the identity \eqref{eqn:identities_u_r:a} by induction on the index $r=1,\ldots,p$. The proof of identity \eqref{eqn:identities_u_r:b} is analogous.  
For $r=1$, replacing  $j$ by $j-1$ in~\eqref{eqn:derivatives_a} , we have 
\begin{equation*}
    i\omega \hat \Phi^{(1)}_{j-1} = \frac{V_{0, j} -   V_{-1, j-1}}{2h + i\frac{\sigma_{j-1} h}{\omega}} = \frac{ V_{-1, j}-V_{0, j-1}}{\frac{i\sigma_{j-1} h}{\omega}},
\end{equation*} where the last identity followed from  the discrete holomorphicity of $\mathsf V$.
Then, in view of the fact that $V_{0,j-1}= \tilde v_{j-1}$, we obtain
\begin{equation*}
    V_{-1, j}   = V_{0,j-1} - h\sigma_{j-1}\hat \Phi^{(1)}_{j-1} = \tilde v_{j-1} - h\sigma_{j-1}\hat \Phi^{(1)}_{j-1},
\end{equation*} which corresponds to~\eqref{eqn:identities_u_r:a} for $r=1$. 

Assume now that \eqref{eqn:identities_u_r:a}  holds for some $r=m-1$. Once again using the discrete holomorphicity of $\sF V$ and~\eqref{eqn:derivatives_a}, we arrive at 
\begin{align*}
    V_{-m, j}  = V_{-(m-1), j-1} - h\sigma_{j-1}\hat \Phi^{(m)}_{j-1}.
\end{align*}
Then, using the induction hypothesis, we can replace $V_{-(m-1), j-1}$ by its formula in  
~\eqref{eqn:identities_u_r:a} to get
\begin{align*}
V_{-m, j}   = \tilde v_{j-m} - h\sum_{\ell=1}^{m-1}\sigma_{j-1-\ell}\hat 
    \Phi^{(m-\ell)}_{j-1-\ell}- h\sigma_{j-1}\hat \Phi^{(m)}_{j-1}
     = \tilde v_{j-m} - h\sum_{\ell=1}^m \sigma_{j-\ell}\hat \Phi^{(m+1-\ell)}_{j-\ell},
\end{align*}
which proves identity \eqref{eqn:identities_u_r:a}. 

Now, to show  the recursive relation \eqref{eqn:PML1Dfreq:b}  we resort to the definition of $\hat\Phi^{(r)}_{j}$   to obtain 
\begin{equation*}
\begin{split}  \lf(2h+i \frac{\sigma_{j}h}{\omega}\rg)i\omega \hat \Phi^{(r)}_{j} =  V_{-r+1,j+1} - V_{-r,j} 
    =\hspace{6cm}\\ \tilde v_{j-r+2} - \tilde v_{j-r} + h\sum_{\ell=1}^{r-1} \left(\sigma_{j-1-\ell}\hat \Phi^{(r-\ell)}_{j-1-\ell} - 
    \sigma_{j+1-\ell}\hat \Phi_{j+1-\ell}^{(r-\ell)}\right)  + h\sigma_{j-1}\hat \Phi^{(r)}_{j-1},\end{split}
\end{equation*}
where~\eqref{eqn:identities_u_r:a} was applied twice to $ V_{-r+1,j+1}$ and  $V_{-r,j}$. It follows from here that
\begin{align*}
    i\omega \hat \Phi^{(r)}_{j}& = \frac{\tilde v_{j-r+2} - \tilde v_{j-r}}{2h} + 
    \frac{\sigma_j \hat \Phi^{(r)}_{j} + \sigma_{j-1}\hat \Phi^{(r)}_{j-1}}{2} + 
    \sum_{\ell=1}^{r-1}\frac{\sigma_{j-1-\ell}\hat \Phi^{(r-\ell)}_{j-1-\ell} - 
    \sigma_{j+1-\ell}\hat \Phi^{(r-\ell)}_{j+1-\ell}}{2}.
\end{align*}
Similarly,  the relation \eqref{eqn:PML1Dfreq:c} is obtained for the definition of  $\Psi^{(r)}_{j}$ and identity~\eqref{eqn:identities_u_r:b}. 

Finally, to prove \eqref{eqn:PML1Dfreq:a} we replace formulas \eqref{eqn:identities_u_r:a} and \eqref{eqn:identities_u_r:b} in \eqref{eqn:discrete_extended}, obtaining the PML equation
\begin{align*}
    -\omega^2\tilde v_j\!=& \mathcal L_{p}[{\sF V_{*,0}}]_{j} =  \sum_{r=-p}^p a_r V_{r, j}\\
    &\!\!\!\!\!\!\!\!\!\!\!\!\!\!\!\!=  a_0 \tilde v_0 + \sum_{r=1}^{p}a_{r}(\tilde v_{j+r} + h\sum_{l=1}^r \sigma_{j+l-1}\hat 
    \Psi^{(r+1-l)}_{j+l}) + \sum_{r=1}^{p} a_{r} (\tilde v_{j-r} - h\sum_{l=1}^r \sigma_{j-l}\hat    \Phi^{(r+1-l)}_{j-l} ) \\
    &\!\!\!\!\!\!\!\!\!\!\!\!\!\!\!\!=  \sum_{r=-p}^{p}a_{r}\tilde v_{j+r} + h\sum_{r=1}^{p}a_{r}\sum_{\ell=1}^{r} \sigma_{j+\ell-1}\hat{\Psi}^{(r+1-\ell)}_{j+\ell} - h\sum_{r=1}^{p}a_{r}\sum_{\ell=1}^{r}\sigma_{j-\ell}\hat{\Phi}^{(r+1-\ell)}_{j-\ell} \\
    &\!\!\!\!\!\!\!\!\!\!\!\!\!\!\!\!=  \mathcal L_p[\tilde {\sF v}]_{j} + h\sum_{r=1}^{p}\sum_{\ell=1}^{r}a_{r}\left( \sigma_{j+\ell-1}\hat{\Psi}^{(r+1-\ell)}_{j+\ell} - \sigma_{j-\ell}\hat{\Phi}^{(r+1-\ell)}_{j-\ell}\right).
\end{align*}
\end{proof}
We complete the presentation of the PML equations for the one-dimensional wave equation by transforming $\tilde{\sF v}$, $\hat{\Phi}^{(r)}$, and $\hat{\Psi}^{(r)}$, $r=1,\ldots,p$, back to the time domain. Doing so we arrive at the following semi-discrete system of equations for the finite-difference approximation $(\mathsf u)_j=u_j$, $j\in \mathbb Z$, of the wave-equation solution, together with the time-dependent auxiliary variables $\phi_j^{(r)}$ and $\psi_{j}^{(r)}$ for $j\in\Z$ and $r=1,\ldots,p$:
\begin{subequations}\label{eqn:pmltime}
\small
\begin{align}
    \frac{\de^2 u_j}{\de t^2} & = \mathcal L_p[\mathsf u]_{j} + h\sum_{r=1}^{p}\sum_{\ell=1}^{r}a_{r}\left( \sigma_{j+\ell-1}\psi_{j+\ell}^{(r+1-\ell)} - \sigma_{j-\ell} \phi_{j-\ell}^{(r+1-\ell)}\right), \label{eq:pmltime_pde}\\
    \frac{\de \phi_j^{(r)}}{\de t} & =  - 
    \frac{\sigma_j\phi_j^{(r)} + \sigma_{j-1}\phi_{j-1}^{(r)}}{2} - 
    \sum_{\ell=1}^{r-1}\frac{\sigma_{j-1-\ell} \phi_{j-1-\ell}^{(r-\ell)} - 
    \sigma_{j+1-\ell}\phi_{j+1-\ell}^{(r-\ell)}}{2}- \frac{u_{j-r+2} - u_{j-r}}{2h},
    \label{eq:pmltime_phi}\\
    \frac{\de \psi_j^{(r)}}{\de t} & = - \frac{\sigma_{j-1}\psi_j^{(r)} + \sigma_{j}\psi_{j+1}^{(r)}}{2} - 
    \sum_{\ell=1}^{r-1}\frac{\sigma_{j+\ell}\psi_{j+\ell+1}^{(r-\ell)} - 
    \sigma_{j+\ell-2}\psi_{j+\ell-1}^{(r-\ell)}}{2}- \frac{u_{j+r} - u_{j+r-2}}{2h}.
\end{align}
\end{subequations}

Note that the variables $\phi^{(r)}_j$ and $\psi^{(r)}_j$ for $j<0$ do not appear in the equations above because $\sigma_j = 0$ for $j < 0$, indicating that they are only active within the PML domain. Additionally, as the initial conditions for $u_j$ are supported only on the physical domain, the initial conditions for $\phi^{(r)}_j$ and $\psi^{(r)}_j$ must be taken as zero.

\begin{remark}\label{rmk:bidirectional_pml}
    Until now, we have only considered a PML region to the right of the physical domain. However, equations \eqref{eqn:pmltime} still hold for a left-sided PML. For instance, to formulate a layer that absorbs left-traveling waves from some index $j_0 < 0$ backward, it suffices to let $\sigma_j > 0$ for $j < j_0$.
\end{remark}

\section{RDPML equations in two dimensions}\label{sec:two_dims}

This section extends the one-dimensional results presented in the previous sections to two spatial dimensions. The procedure outlined here can be easily generalized to three and higher dimensions.

We start off by considering an equispaced grid $\{\bol{x}_{j, k} := 
(jh, kh): (j, k) \in \mathbb{Z}^2\} \subset \mathbb{R}^2$, $h>0$, on which we define the following 
$(4p+1)$-point symmetric finite difference approximation of the Laplacian:
\begin{align}
    \Delta v\big|_{\nex=\bol{x}_{j, k}} & = \sum_{r=-p}^p a_r\lf\{ v(\nex_{j+r, k}) +v(\nex_{j, k+r})\rg\} + 
    \mathcal{O}(h^{2p}),
    \label{eqn:wave_2d}
\end{align}
as $h\to 0$, where we have assumed that $v \in C^{2p}(\mathbb{R}^2)$, $p \in \mathbb{N}$. We use here the same coefficients
$\{a_r\}_{r=-p}^p$ as in the one-dimensional case (see Sec.~\ref{sec:one_dim}). Upon applying Fourier transform in time to the solution $u$ in \eqref{eqn:wave_1d} we arrive at the following discrete equation for the frequency domain solution~$v$ at the grid nodes $\bol{x}_{j, k}$:
\begin{align}
    \mathcal{L}_p[\sF v] + \omega^2 \sF v = 0, \label{eqn:finitedifferenceoperator_2d}
\end{align}
where abusing the notation we have set
\begin{align}
    \mathcal{L}_p[\sF v]_{j, k} := \sum_{r=-p}^pa_r\{v_{j+r, k}+v_{j, k+r}\}, \quad j, k \in \mathbb{Z},
    \label{eqn:disc_helm_2d}
\end{align}
with $\sF v = \sF v(\omega) \in \mathbb{C}^{\mathbb{Z} \times \mathbb{Z}}$ containing the approximate grid values $v_{j,k}$ of $v(\cdot, \omega)$ in \eqref{eqn:wave_2d}. Following the same procedure as 
in the one-dimensional case, we extend the two-dimensional grid $\{\bol{x}_{j, k}\}$ for $(j, k) \in \mathbb{Z}^2$, to a discrete complex domain $(\Lambda, \sF Z)$, with $\sF Z = (Z_{j, \ell, k, m}):\mathbb{Z}^2 \times \mathbb{Z}^2 \to \mathbb{C}^2$ 
defined by
\begin{equation*}
    Z_{j, \ell, k, m} = \left((j+\ell)h + \frac{ih}{\omega}\sum_{n=0}^{\ell-1}\sigma_n,\quad
    (k+m)h + \frac{ih}{\omega}\sum_{n=0}^{m-1}\sigma_n\right),
    \quad (j, \ell), (k, m) \in \mathbb{Z}^2,
\end{equation*}
where the damping coefficients $\{\sigma_n\}_{n \in \mathbb{Z}}$ are defined in the same way as in 
\eqref{eq:sigma_def}. We proceed by extending $\sF v \in \mathbb{C}^{\mathbb{Z}\times\Z}$ to a function 
$\sF V \in \mathbb{C}^{\mathbb{Z}^2 \times \mathbb{Z}^2}$, and we ask the extension $\sF V$ to be 
holomorphic when restricted to each of the indexes pairs $(j, \ell)$ and $(k, m)$. We then restrict $\sF V$ to the subset
\begin{equation*}%\label{eqn:stretching_path_2d}
    Z_{0, j, 0, k}  = \left(jh + \frac{ih}{\omega}\sum_{n=0}^{j-1}\sigma_n,
    kh + \frac{ih}{\omega}\sum_{n=0}^{k-1}\sigma_n\right), \quad
    j, k \in \mathbb{Z}.  
\end{equation*}
and denote by $\tilde{\sF v}$ such a restriction.

Introducing auxiliary variables for each dimension we define, for $j, k\in \mathbb Z$ and $r=1,\dots,p$, $\hat{\bol \Phi}$ and $\hat{\bol{\Psi}}$ as follows:
\begin{align}
    \hat \Phi_{j, k}^{(x, r)} & = \frac{1}{i \omega} \frac{V_{-r+1, j+1, 0, k} - 
    V_{-r, j, 0, k}}{2h + \frac{i\sigma_j h}{\omega}}, &
    \hat \Psi_{j, k}^{(x, r)} = \frac{1}{i\omega}\frac{V_{r, j, 0, k} - 
    V_{r-1, j-1, 0, k}}{2h + \frac{i\sigma_{j-1}h}{\omega}} \nonumber\\
    \hat \Phi_{j, k}^{(y, r)} & = \frac{1}{i\omega}\frac{V_{0, j, -r+1, k+1} - 
    V_{0, j, -r, k}}{2h + \frac{i\sigma_k h}{\omega}}, & 
    \hat \Psi_{j, k}^{(y, r)} = \frac{1}{i \omega}\frac{V_{0, j, r, k} - 
    V_{0, j, r-1, k-1}}{2h + \frac{i \sigma_{k-1}h}{\omega}}. \nonumber
\end{align}

Next, fixing first the index $k$ and then the index $j$, and following the procedure presented in Proposition~\ref{prop:identities} for the one-dimensional case, we obtain 
%\begin{subequations}
    \begin{align}
    V_{-r, j, 0, k} & = \tilde v_{j-r, k} - h\sum_{\ell=1}^r\sigma_{j-\ell}
    \hat \Phi_{j-\ell, k}^{(x, r+1-\ell)}, & 
    V_{r, j, 0, k} & = \tilde v_{j+r, k} + h\sum_{\ell=1}^r\sigma_{j+\ell-1}
    \hat \Psi_{j+\ell, k}^{(x, r+1-\ell)}. \\
    V_{0, j, -r, k} & = \tilde v_{j, k-r} - h\sum_{\ell=1}^r\sigma_{k-\ell}
    \hat \Phi_{j, k-\ell}^{(y, r+1-\ell)}, & 
    V_{0, j, r, k} & = \tilde v_{j, k+r} + h\sum_{\ell=1}^r\sigma_{k+\ell-1}
    \hat \Psi_{j, k+\ell}^{(y, r+1-\ell)}. 
\end{align}%\label{eq:inden_2d}\end{subequations}

The identities above allow us to get an explicit equation for $\tilde{\sF v}$. Indeed, using the fact that the holomorphic extension $\sf V$ satisfies 
    $\mathcal{L}_p[\sF V_{*, \ell, *, m}] = -\omega^2 \sF V_{*, \ell, *, m}$, for $(\ell,m)\in\Z\times\Z$, it follows 
    that the frequency-domain PML equations for the $\tilde{\sF v}$ can be expressed as:
{\footnotesize
\begin{align}
    -\omega^2 \tilde v_{j, k} & = \mathcal{L}_p[\tilde{\mathsf v}]_{j, k} + \nonumber \\
    & h\sum_{r=1}^p\sum_{\ell=1}^r a_r\left(\sigma_{j+\ell-1}\hat\psi^{(x, r+1-\ell)}_{j+\ell, k} + 
    \sigma_{k+\ell-1}\hat\psi^{(y, r+1-\ell)}_{j, k+\ell} - 
    \sigma_{j-\ell}\hat\phi^{(x, r+1-\ell)}_{j-\ell, k} - \sigma_{k-\ell}\hat\phi^{(y, r+1-\ell)}_{j, k-\ell}
    \right)
    \nonumber \\
    i\omega \hat\phi_{j, k}^{(x, r)} & = 
    \frac{\tilde v_{j-r+2, k} - \tilde v_{j-r, k}}{2h} + 
    \frac{\sigma_j \hat\phi_{j, k}^{(x, r)} + 
    \sigma_{j-1}\hat\phi_{j-1, k}^{(x, r)}}{2} + 
    \sum_{\ell=1}^{r-1}\frac{\sigma_{j-1-\ell}\hat\phi_{j-1-\ell, k}^{(x, r-\ell)} - 
    \sigma_{j+1-\ell}\hat\phi_{j+1-\ell, k}^{(x, r-\ell)}}{2}
    \nonumber\\
    i\omega \hat\psi_{j, k}^{(x, r)} & = 
    \frac{\tilde v_{j+r, k} - \tilde v_{j+r-2, k}}{2h} + 
    \frac{\sigma_{j-1}\hat\psi_{j, k}^{(x, r)} + 
    \sigma_{j}\hat\psi_{j+1, k}^{(x, r)}}{2} +
    \sum_{\ell=1}^{r-1}\frac{\sigma_{j+\ell}\hat\psi_{j+\ell+1, k}^{(x, r-\ell)} - 
    \sigma_{j+\ell-2}\hat\psi_{j+\ell-1, k}^{(x, r-\ell)}}{2}
    \nonumber\\
    i\omega \hat\phi_{j, k}^{(y, r)} & = 
    \frac{\tilde v_{j, k-r+2} - \tilde v_{j, k-r}}{2h} + 
    \frac{\sigma_k \hat\phi_{j, k}^{(y, r)} + 
    \sigma_{k-1}\hat\phi_{j, k-1}^{(y, r)}}{2} + 
    \sum_{\ell=1}^{r-1}\frac{\sigma_{k-1-\ell}\hat\phi_{j, k-1+\ell}^{(y, r-\ell)} - 
    \sigma_{k+1-\ell}\hat\phi_{j, k+1-\ell}^{(y, r-\ell)}}{2}
    \nonumber\\
    i\omega \hat\psi_{j, k}^{(y, r)} & = 
    \frac{\tilde v_{j, k+r} - \tilde v_{j, k+r-2}}{2h} + 
    \frac{\sigma_{k-1}\hat\psi_{j, k}^{(y, r)} + 
    \sigma_{k}\hat\psi_{j, k+1}^{(y, r)}}{2} +
    \sum_{\ell=1}^{r-1}\frac{\sigma_{k+\ell}\hat\psi_{j, k+\ell+1}^{(y, r-\ell)} - 
    \sigma_{k+\ell-2}\hat\psi_{j, k+\ell-1}^{(y, r-\ell)}}{2}.
    \nonumber
\end{align}
}

Transforming $\tilde{\mathsf v}$ back to time domain, we get the following semi-discrete system of equations for the finite difference approximation $(\mathsf u)_{j,k} = u_{j, k}$, $(j, k) \in \mathbb{Z}^2$, of the wave-equation solution, together with auxiliary variables $\phi_{j, k}^{(x, r)}$, $\phi_{j, k}^{(y, r)}$, $\psi_{j, k}^{(x, r)}$ and $\psi_{j, k}^{(y, r)}$ for $(j, k) \in \mathbb{Z}^2$ and $r=1, \dots, p$:

\begin{equation}
    \scriptsize
    \begin{aligned}
    \frac{\de^2 u_{j, k}}{\de t^2} & = \mathcal{L}_p[\mathsf u]_{j, k} + \\
    &h\sum_{r=1}^p\sum_{\ell=1}^r a_r\left(\sigma_{j+\ell-1}\psi^{(x, r+1-\ell)}_{j+\ell, k} + 
    \sigma_{k+\ell-1}\psi^{(y, r+1-\ell)}_{j, k+\ell} - 
    \sigma_{j-\ell}\phi^{(x, r+1-\ell)}_{j-\ell, k} - \sigma_{k-\ell}\phi^{(y, r+1-\ell)}_{j, k-\ell}
    \right),
     \\
    -\frac{\de \phi_{j, k}^{(x, r)}}{\de t} & = 
    \frac{u_{j-r+2, k} - u_{j-r, k}}{2h} + 
    \frac{\sigma_j \phi_{j, k}^{(x, r)} + 
    \sigma_{j-1}\phi_{j-1, k}^{(x, r)}}{2} + 
    \sum_{\ell=1}^{r-1}\frac{\sigma_{j-1-\ell}\phi_{j-1-\ell, k}^{(x, r-\ell)} - 
    \sigma_{j+1-\ell}\phi_{j+1-\ell, k}^{(x, r-\ell)}}{2},
    \\
    -\frac{\de \psi_{j, k}^{(x, r)}}{\de t} & = 
    \frac{u_{j+r, k} - u_{j+r-2, k}}{2h} + 
    \frac{\sigma_{j-1}\psi_{j, k}^{(x, r)} + 
    \sigma_{j}\psi_{j+1, k}^{(x, r)}}{2} +
    \sum_{\ell=1}^{r-1}\frac{\sigma_{j+\ell}\psi_{j+\ell+1, k}^{(x, r-\ell)} - 
    \sigma_{j+\ell-2}\psi_{j+\ell-1, k}^{(x, r-\ell)}}{2},
    \\
    -\frac{\de \phi_{j, k}^{(y, r)}}{\de t} & = 
    \frac{u_{j, k-r+2} - u_{j, k-r}}{2h} + 
    \frac{\sigma_k \phi_{j, k}^{(y, r)} + 
    \sigma_{k-1}\phi_{j, k-1}^{(y, r)}}{2} + 
    \sum_{\ell=1}^{r-1}\frac{\sigma_{k-1-\ell}\phi_{j, k-1+\ell}^{(y, r-\ell)} - 
    \sigma_{k+1-\ell}\phi_{j, k+1-\ell}^{(y, r-\ell)}}{2},
    \\
    -\frac{\de \psi_{j, k}^{(y, r)}}{\de t} & = 
    \frac{u_{j, k+r} - u_{j, k+r-2}}{2h} + 
    \frac{\sigma_{k-1}\psi_{j, k}^{(y, r)} + 
    \sigma_{k}\psi_{j, k+1}^{(y, r)}}{2} +
    \sum_{\ell=1}^{r-1}\frac{\sigma_{k+\ell}\psi_{j, k+\ell+1}^{(y, r-\ell)} - 
    \sigma_{k+\ell-2}\psi_{j, k+\ell-1}^{(y, r-\ell)}}{2}.    
\end{aligned}\label{eq:RDPML_2D}
\end{equation}

\begin{remark} Even though we developed the PML equations for a layer in the positive indexes,  \Cref{rmk:bidirectional_pml} is extended to two or more dimensions. The only change needed for a PML in the negative indexes is to let $\sigma_j > 0$ from the desired index $j_0 < 0$ backward.
\end{remark}

\section{Frequency-domain reduced system}\label{sec:linear_system}

The solution of the wave equation in the frequency domain, commonly referred to as the Helmholtz equation, holds significant relevance in numerous application areas. Standard formulations of PMLs for the Helmholtz equation typically do not incorporate auxiliary variables. 
Instead, in PML formulations for the Helmholtz equation, the problem is typically recast as a single scalar second-order PDE. This formulation introduces a term of the form $\operatorname{div} A\nabla$, where the matrix $A\in\mathbb{C}^{d\times d}$ encodes the PML's effect on the equation. Upon discretization using an FD scheme, the resulting linear system exhibits sparsity. This indicates that the application of the standard PML does not impact the sparsity character of the resulting linear system. In this section, we aim to investigate the sparsity of the system matrices that arise from our high-order RDPML schemes for solving the Helmholtz equation. For the sake of conciseness, we focus on the one-dimensional wave equation in our analysis. However, it is important to note that the findings can be extrapolated to higher dimensions as well.

We start off by deriving expressions for the auxiliary variables $\hat{\bol\Phi}^{(r)}$ and ${\hat{\bol\Psi}^{(r)}}$ in \eqref{eqn:derivatives} in terms of the discrete holomorphic function $\tilde{\mathsf v}$. The linear system~\eqref{eqn:PML1Dfreq} for the time-harmonic 
variables can be expressed in a more concise way as:
\begin{subequations}
    \begin{align}
    (\omega^2\mathcal{I} + \mathcal{L}_p)\tilde{\mathsf v} + 
    \sum_{r=1}^p \mathcal{A}_r\hat{\bol\Phi}^{(r)} + 
    \sum_{r=1}^p\mathcal{B}_r\hat{\bol\Psi}^{(r)} & = \mathbf 0,  \\
    \mathcal{D}_1^{(r)}\tilde{\mathsf v} + \mathcal{C}_1\hat{\bol\Phi}^{(r)} + 
    \sum_{\ell=1}^{r-1}\mathcal{E}_{r, \ell}\hat{\bol\Phi}^{(\ell)} & = \mathbf 0, \quad 
    r =1,\ldots, p, \label{eqn:system_1} \\
    \mathcal{D}_2^{(r)}\tilde{\mathsf v} + \mathcal{C}_2\hat{\bol\Psi}^{(r)} + 
    \sum_{\ell=1}^{r-1}\mathcal{F}_{r, \ell}\hat{\bol\Psi}^{(\ell)} & = \mathbf 0, 
    \quad r=1,\ldots, p, \label{eqn:system_2}
\end{align}\label{eq:system_1}\end{subequations}
where $\mathcal I$ and $\mathcal L_p$ denote the identity operator and the 
FD operator \eqref{eqn:finitedifferenceoperator}, respectively. The coefficients that define  the remaining operators $\mathcal A_r$, $\mathcal B_r$, $\mathcal D_1^{(r)}$, $\mathcal D_2^{(r)}$, $\mathcal C_1$, $\mathcal C_2$, $\mathcal E_{r,l}$, and $\mathcal F_{r,l}$ can be inferred from~\eqref{eqn:PML1Dfreq}. In particular, $\mathcal{C}_1$ and $\mathcal{C}_2$ are lower and upper bidiagonal and have the following non-zero entries
\begin{equation*}
(\mathcal{C}_1)_{j, j}  = -i\omega + \frac{\sigma_j}{2},\quad (\mathcal{C}_1)_{j, j-1}  = \frac{\sigma_{j-1}}{2},  \quad 
(\mathcal{C}_2)_{j, j}  = -i\omega + \frac{\sigma_{j-1}}{2},  \quad
(\mathcal{C}_2)_{j, j+1}  = \frac{\sigma_{j}}{2},
\end{equation*} 
for $j \geq 0$. We also note that, for $\omega \in \mathbb{R} \setminus \{0\}$, the inverses of these operators are lower and upper triangular with non-zero entries given explicitly by: 
\begin{equation}
    \begin{aligned}
    (\mathcal{C}_1^{-1})_{j, k} & = 
    \frac{2(-1)^{j+k}}{(-2i\omega + \sigma_j)}\prod_{l=k}^{j-1}
    \frac{\sigma_{l}}{(-2i\omega + \sigma_l)}, & \text{if } k \leq j,
     \\
    (\mathcal{C}_2^{-1})_{j, k} & = 
        \frac{2(-1)^{j+k}}{(-2i\omega + \sigma_{j-1})}
        \prod_{l=j}^{k-1}\frac{\sigma_{l}}{(-2i\omega + \sigma_{l})}, & 
        \text{if } k \geq j.
\end{aligned}\label{eqn:c_invs}
\end{equation}

In order to express~\eqref{eq:system_1} in a further compact form, we introduce the vectors $\hat{\bol\Phi} = (\hat{\bol\Phi}^{(1)}, ..., \hat{\bol\Phi}^{(p)})^\top$, 
$\hat{\bol\Psi} = (\hat{\bol\Psi}^{(p)}, ..., \hat{\bol\Psi}^{(1)})^\top$; the inverted order of the variables in $\hat{\mathbf{\Psi}}$ is solely for the purpose of simplifying the calculations that follow. Additionally, we define the block operators:
\begin{equation*}
\mathcal{E}  = 
    \begin{bmatrix}
        \mathcal{C}_1 & 0 & \dots & 0\\
        \mathcal{E}_{2, 1} & \mathcal{C}_1 &  \ddots & \vdots\\
        \vdots & \ddots & \ddots & 0\\
        \mathcal{E}_{p, 1} &  \dots & 
        \mathcal{E}_{p, p-1} & \mathcal{C}_1
    \end{bmatrix}, \quad
\mathcal{F}  = 
    \begin{bmatrix}
        \mathcal{C}_2 & \mathcal{F}_{p, p-1} &  \dots & \mathcal{F}_{p, 1}\\
        0             & \mathcal{C}_2        & \ddots & \vdots            \\
        \vdots        & \ddots               & \ddots & \mathcal{F}_{2, 1}\\
        0             & \dots                & 0      & \mathcal{C}_2
    \end{bmatrix}.
\end{equation*}

With the aid of these definitions, we can write~\eqref{eq:system_1} as
\begin{equation}
    \begin{aligned}
    (\omega^2\mathcal{I} + \mathcal L_p)\tilde{\mathsf v} + \mathcal{A}\hat{\bol\Phi} + 
    {B}\hat{\bol\Psi} & = \bol 0, \\
    \mathcal{D}_1\tilde{\mathsf v} + \mathcal{E}\hat{\bol\Phi} & = \bol 0,  \\
    \mathcal{D}_2\tilde{\mathsf v} + \mathcal{F}\hat{\bol\Psi} & = \bol 0, 
    \end{aligned}\label{eq:system_2}
\end{equation}
where $\mathcal A= [\mathcal A_1,\mathcal A_2,\ldots,\mathcal A_p],\quad\mathcal B= [\mathcal B_1,\mathcal B_2,\ldots,\mathcal B_p]$, $\quad  \mathcal{D}_1 =[\mathcal{D}_1^{(1)},\mathcal{D}_1^{(2)},\dots,\mathcal{D}_1^{(p)}]^\top$, and  $\mathcal{D}_2 = [\mathcal{D}_2^{(p)}, \mathcal{D}_2^{(p-1)},\dots, \mathcal{D}_2^{(1)}]^\top$.

It hence follows that $\hat{\bol\Phi}$ and $\hat{\bol\Psi}$ can be eliminated from~\eqref{eq:system_2}  provided $\mathcal{E}$ and $\mathcal{F}$ are invertible, and they are since $\mathcal C_1$ and $\mathcal C_2$ are so. Therefore, we arrive at the following reduced system linear system for $\tilde{\mathsf v}$:
\begin{align}
    (\omega^2 \mathcal{I} + \mathcal L_p)\tilde{\mathsf v} - 
    \mathcal{A}(\mathcal{E}^{-1}\mathcal{D}_1)\tilde{\mathsf v} - 
    \mathcal{B}(\mathcal{F}^{-1}\mathcal{D}_2)\tilde{\mathsf v} & = \bol 0.\label{eq:reduced_Helm}
\end{align}
Furthermore, we note that $\mathcal{E}$ and $\mathcal{F}$ can be easily inverted by forward and backward substitution, respectively, using formulae~\eqref{eqn:c_invs}. This shows that, as standard PML methods, the frequency-domain problem can be reduced to a single equation for the holomorphically extended PDE solution restricted to the stretching path. Unfortunately, unlike the full system that includes the auxiliary variables, the operators $\mathcal A( \mathcal E^{-1} \mathcal D_1)$ and $\mathcal B(\mathcal F^{-1}\mathcal D_2)$ in~\eqref{eq:reduced_Helm} give rise to a dense asymmetric matrix block when, in practice, the  PML is terminated by any suitable boundary condition.

To study the structure of the matrices stemming from~\eqref{eq:reduced_Helm} in more detail, we look into the sparsity patterns of the actual system matrices that arise from truncating the PML domain by means of periodic boundary conditions, as explained in greater detail in \cref{sec:numerical_results_1d} below. For the fourth-order ($p=2$) RDPML scheme, we consider both the full system derived from~\eqref{eq:system_2}, which includes the auxiliary variables, as well as the reduced linear system obtained from~\eqref{eq:reduced_Helm}. By ordering the unknowns as $(\tilde{\mathsf v}, \hat{\bol\Phi}^{(1)}, \hat{\bol\Phi}^{(2)}, \hat{\bol\Psi}^{(1)}, \hat{\bol\Psi}^{(2)})$ in the full-system case, we observe that the resulting linear system exhibits a sparse structure. Specifically, the system consists of band-limited matrix blocks, as shown in \Cref{fig:3a}. Conversely, in the reduced-system case, we observe a different matrix structure. Specifically, the resulting system matrix includes a dense block $(A_{22})$ associated with the $\tilde{\mathsf v}$-values corresponding to the nodes located within the PML domain. This dense block can be visualized in \Cref{fig:3b}. The presence of this dense block indicates that the reduced linear system requires careful consideration and efficient computational techniques to handle the increased complexity in solving for the $\tilde{\mathsf v}$-values within the PML region. Naturally, the matrix block ($A_{11}$) associated with the $\tilde{\mathsf v}$-values at the nodes located within the physical domain remain the same in both cases. 

Despite the smaller dimensions of the system matrix in the reduced linear system, its localized dense structure can have a significant impact on the efficiency of solving the linear system. Initial numerical experiments indicate that employing sparse direct solvers to solve the full system, with its sparsity pattern, can be faster compared to using a Schur-complement approach and sparse direct solvers to solve the reduced linear system. This suggests that the computational cost associated with the dense block in the reduced system outweighs the benefits gained from its reduced size. Finally, we note that we also explored the use of iterative solvers, such as CGS and GMRES, with standard 
preconditioning strategies such as SGS, ILU, and Jacobi. However, after conducting tests, we found that the most efficient approach for solving the system was still utilizing a sparse solver over the full system. 

\begin{figure}[ht!]
    \centering
    \subfloat[]{\includegraphics[scale=0.4, trim={2cm 0cm 0cm 0cm}]{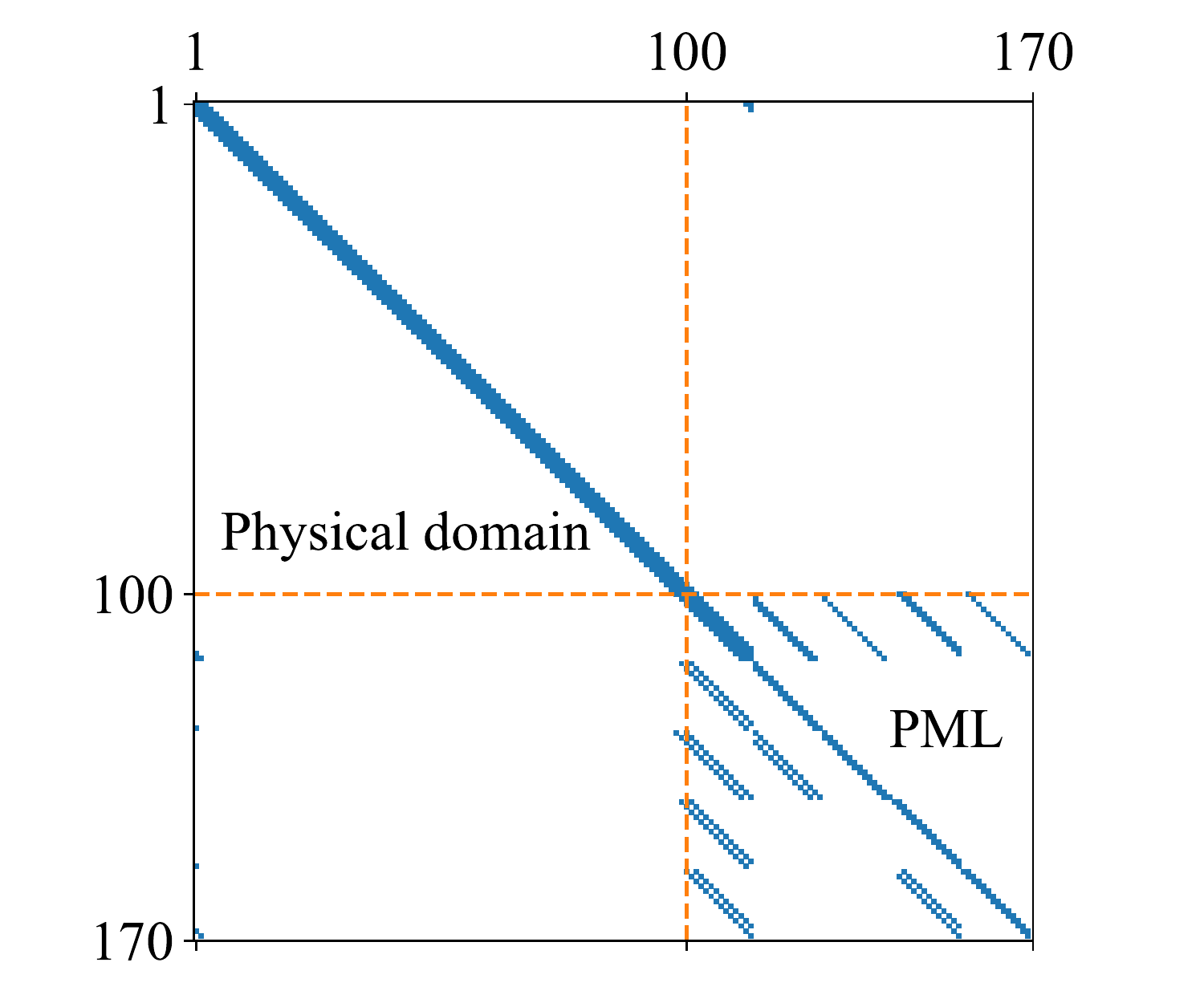}\label{fig:3a}}
    \subfloat[]{\includegraphics[scale=0.4]{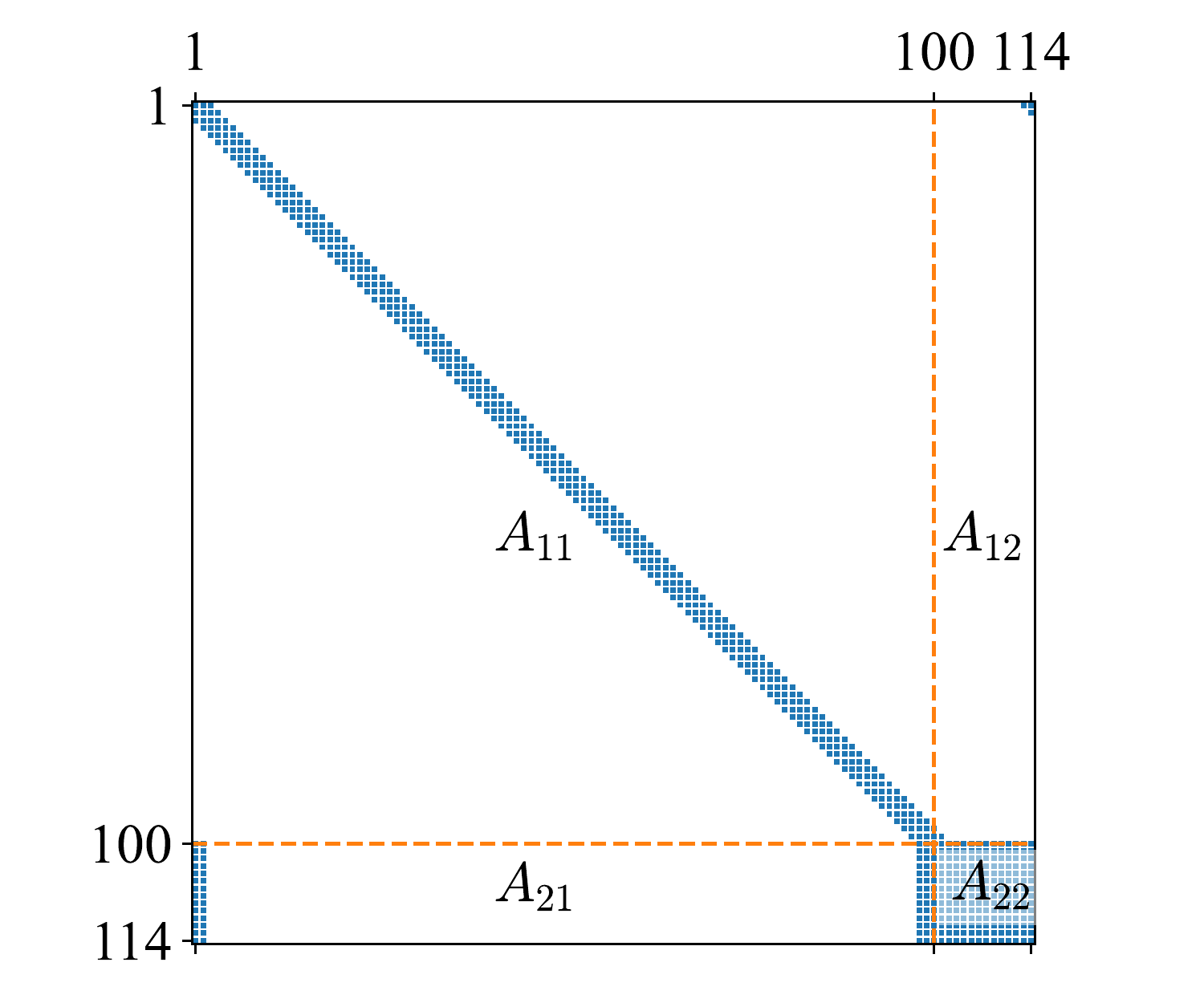}\label{fig:3b}}
    \caption{Comparison of the sparsity patterns of the full (a) and reduced (b) 
    linear systems matrices resulting from the fourth-order RDPML scheme for the 1D  time-harmonic wave (Helmholtz) equation. In 
    (b) the $A_{11}$ matrix block is associated with the nodes in the physical domain, while the dense 
    $A_{22}$ block corresponds to the nodes in the PML domain.}
    \label{fig:linear_system}
\end{figure}

\section{Numerical results}\label{sec:numerical_results}
This section introduces a range of numerical experiments designed to validate and demonstrate the effectiveness of the proposed methodology in the context of solving the wave equation in one- and two-spatial dimensions.

For the sake of clarity, unless stated otherwise, in the following sections, we employ the damping function $\sigma:\Z\to\R$:
$$
\sigma = 
\begin{cases} 
0 & \text{if } j<0,\\
\displaystyle 2/h & \text{if } j\geq 0,
\end{cases}
$$
in constructing the RDPML discrete stretching path. The constant value $2/h$ was determined by approximately minimizing the decay rate $\varrho(\sigma) =|\rho(\sigma,\xi,\omega)|$ in the case when the wavenumber satisfies the exact dispersion relation $\xi=\omega$ (refer to the supplementary material~\ref{sm:opti_sigma} for details).

\subsection{One-dimensional wave equation}\label{sec:numerical_results_1d}

We first set out to validate both the reflectionless and high-order characteristics of the proposed FD schemes in a one-dimensional spatial domain problem. To achieve this goal, we present two numerical experiments based on the following test problem. At the continuous level we consider the wave equation~\eqref{eqn:wave_1d} for $u:\R\times\R_+\mapsto\R$ with initial data
\begin{align}
    g_0(x) = \begin{cases}
        \e^{-10(x+3)^2}, & \text{if } |x+3| \leq 2,\\
        0, & \text{if } |x+3| > 2,
    \end{cases} \nonumber
\end{align}
and $g_1(x)=0$ for $x\in\R$.
It is worth noting that for numerical purposes, we can consider the truncated Gaussian function~$g_0$  used in the initial condition to be globally smooth. This assumption is valid since its jump discontinuity has a height that is in the same order as machine precision. 

As is well known~\cite{evans2022partial}, this initial value problem admits an exact solution given by D'Alembert's formula:
\begin{equation}
u(x,t) = \frac{g_0(x-t) + g_0(x+t)}{2},\quad t>0,\ x\in\R,
\label{eq:exat_sol}\end{equation}
which will be used as a reference solution to assess the numerical errors resulting from the proposed schemes in our first set of experiments. The constant damping coefficient $\sigma_j = 2/\sqrt{h}$ in the definition of the stretching path~\eqref{eq:complex_path} is used in all the numerical experiments presented in this section.
 
In order to numerically solve our test problem, we begin by confining the original unbounded spatial domain $\mathbb{R}$ to a finite interval $\Omega_0=(-6,0)$ and we augment this bounded domain with a finite-length PML domain, denoted as $\Omega_{\rm PML}=(0,4)$. To discretize the resulting computational domain $\Omega=(-6,4)$, we employ a uniform mesh ${x_j=-6+jh,\ j=0,\ldots,N}$, where $h=10/N$ and $N\in\mathbb{N}$.  Following~\cite{Chern_2019}, at the boundary of the computational domain $\Omega$, i.e., at $\p\Omega=\{-6,4\}$, we numerically impose periodic boundary conditions on the discrete variables $u_j$, $\phi_j^{(r)}$, and $\psi_j^{(r)}$ that make up the one-dimensional RDPML schemes~\eqref{eqn:pmltime}. In detail, the periodic boundary conditions used are given by:
{\begin{equation}
    \begin{aligned}
u_{N+j}=&~u_{j},&&\quad j=0,\ldots,p-1,\\
 \phi^{(r)}_{N+j}=&~\phi^{(r)}_{j}, \quad\psi^{(r)}_{N+j}=\psi^{(r)}_{j},&&\quad j=0,\ldots,p-r,\quad r=1,2,\ldots,p,
\end{aligned}\label{eq:per_bc_1d}
\end{equation}}
 where $u_{N+j}$, $\phi^{(r)}_{N+j}$, and $\psi^{(r)}_{N+j}$ for  $j\geq 1$ and $p\geq 2$ can be interpreted as the wave-equation solution and auxiliary function values at virtual nodes $x_{N+j}=(N+j)h$, $j=1,\ldots,p-1$ lying outside the computational domain $\Omega$.   
 
In order to demonstrate the attainment of the desired $2p$ order of accuracy by the proposed schemes, for $p=1,2,3$ and $4$ we impose the periodic boundary conditions~\eqref{eq:per_bc_1d} and evolve the system~\eqref{eqn:pmltime}  for different spatial discretization sizes $h>0$ by means of an explicit Runge-Kutta (RK) time integrator of order eight~\cite{Fehlberg1968} using a time step of $h/8$. This choice ensures a sufficiently accurate time evolution during the computations. Figure \ref{fig:conv_1d} displays the numerical errors obtained in these examples for grid sizes $h=2^{-n}$, $n=3,\ldots,7$. The error is measured as
\begin{equation}\label{eq:discrete_error}
E_h := \max_{t_\ell=h\ell/8\in[ 0,10]}\ \max_{x_j\in\overline\Omega_0} |u_j(t_\ell)-u(x_j,t_\ell)|,\end{equation}
where $t_\ell$ are the discrete times used by the time integrator, $u$ is the exact solution~\eqref{eq:exat_sol}, and $u_j$ the resulting RDPML solution.
 Clearly, the RDPML solutions achieve the expected $2p$ orders of convergence as $h$ decreases. Furthermore, these results serve as a first validation of the proposed RDPML equations~\eqref{eqn:pmltime}, as waves first hit the PML boundary at $x=0$ around $t=1$ and the error~\eqref{eq:discrete_error} accounts for all discrete times between $t=0$ and $t=10$.

\begin{figure}[ht!]
    \centering
    \includegraphics[scale=0.45]{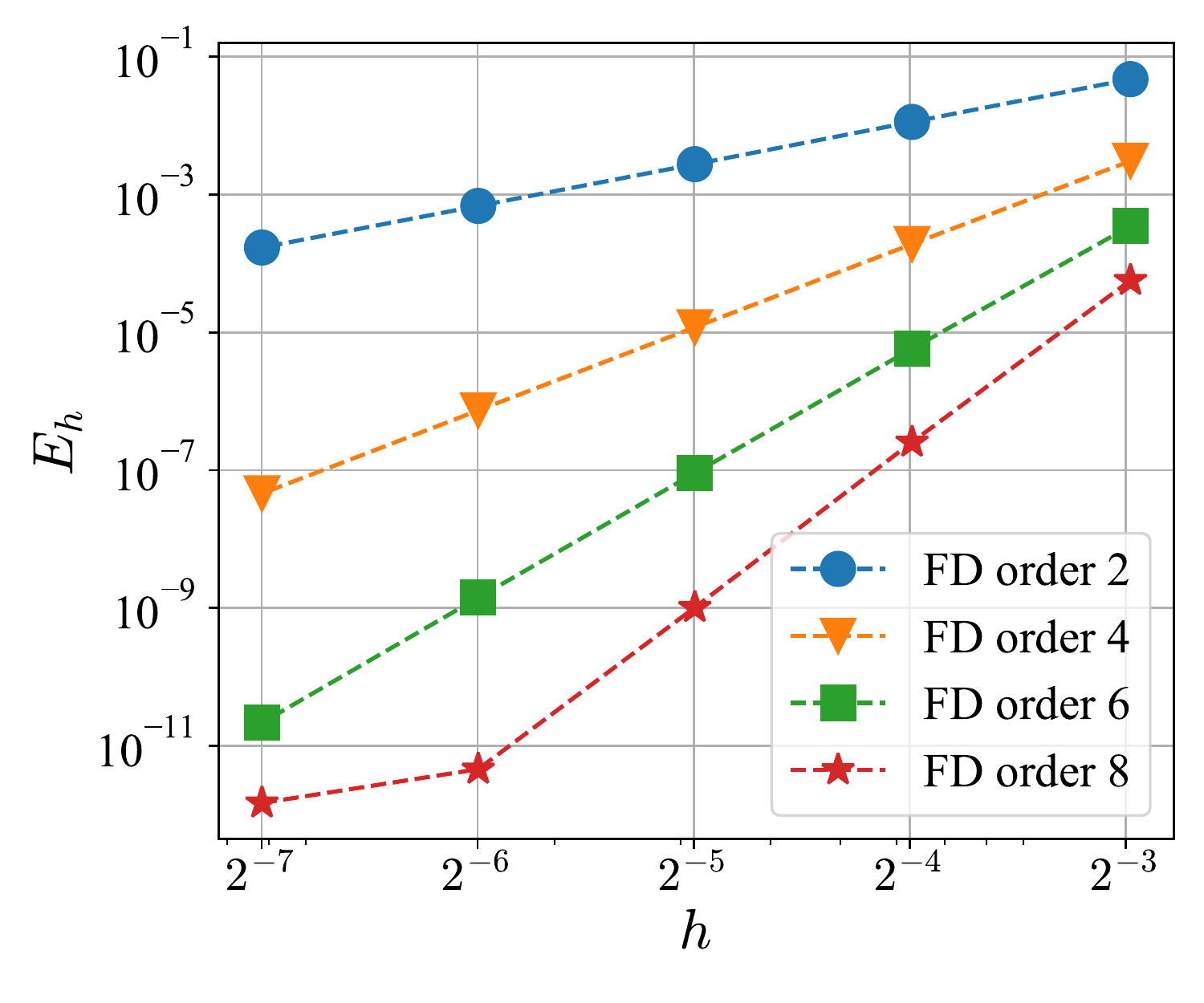}
    \caption{History of convergence of the approximate solutions using the RDPML schemes of order 2, 4, 6, and 8 for the one-dimensional wave equation. The errors are computed using the formula \eqref{eq:discrete_error}, and the marks in the lines correspond to the values of $h$ used in the computations.}
    \label{fig:conv_1d}
\end{figure}

%Unfortunately, the results showcased in Figure~\ref{fig:conv_1d} fall short of providing complete verification of the proposed schemes' reflectionless property. In fact, these results only serve to illustrate that the numerical reflection errors are comparable to the errors introduced by the finite difference discretization of the second-order derivative operator. To actually 
To verify the reflectionless property, we instead consider another experiment where for a fixed discretization size $h = 2^{-6}$, we compare the RDPML solutions of order $2p=2,4,6$ and $8$ with the corresponding numerical solutions $u^{\rm ref}_j$ of the wave equation obtained using the standard FD scheme of order $2p$ over the larger domain $\Omega_{\rm ref} = (-11,5)$. The grids of $\Omega_{\rm ref}$ and $\Omega$ are constructed in such a way that their nodes lying within $\Omega_0$ coincide. The motivation behind this experiment stems from the fact that the solutions $u_j$ and $u^{\rm ref}_j$ obtained using the RDPML method and the standard FD method, respectively, coincide at the nodes located within $\Omega_0$ for all time values in the range $t\in[0,1]$, i.e., before the waves reach the boundary points of the physical domain $\p\Omega_0=\{-6,0\}$. However, after this time interval, the artificial numerical reflections introduced by the PML are expected to propagate into the physical domain $\Omega_0$, allowing for their measurement. To quantify these reflections, we then introduce the following time-dependent error measure:
\begin{equation}\label{eq:discrete_error_2}
E_{\rm 1D}(t) :=\max_{x_j\in\overline\Omega_0} |u_j(t)-u^{\rm ref}_j(t)|.
\end{equation}

\Cref{fig:6a} displays the time evolution of this error measure at the discrete time instances obtained using the eight-order RK time integrator and spatial discretizations of order $2p=2,4,6,$ and $8$. The time $t=1$ is marked in this figure, which corresponds to the arrival time of the wave at the PML. We used here a large PML domain given by $\Omega_{\rm PML} = (0,10)$. The near-machine precision errors shown in this figure provide strong evidence of the effectiveness of the proposed high-order RDPML schemes and demonstrate its capability to accurately truncate the infinite domain while minimizing numerical reflection errors to an exceptional degree. This experiment, however, does not account for the effect of the (periodic) boundary conditions on the error $E_{\rm 1D}$. To take them into account, we instead consider the shorter PML domain $\Omega_{\rm PML} = (0,4)$ and we measure $E_{1D}$ over the same time interval $[0,10]$. \Cref{fig:6b} displays the evolution of $E_{\rm 1D}$ in this case, where the dashed line (marked in red) at $t=5$   indicates the round-trip time at which, due to the boundary condition used, the waves attenuated within the PML re-enter the physical domain $\Omega_0$. The fact that no differences can be observed between the two figures suggests that the boundary condition used has a negligible effect on the performance of the proposed RDPML. Similar, results have been observed using a two-sided PML domain in conjunction with standard homogeneous Dirichlet boundary conditions.

\begin{figure}[ht]
    \centering
    \subfloat[]{\includegraphics[scale=0.45, trim={2cm 0cm 0cm 0cm}]{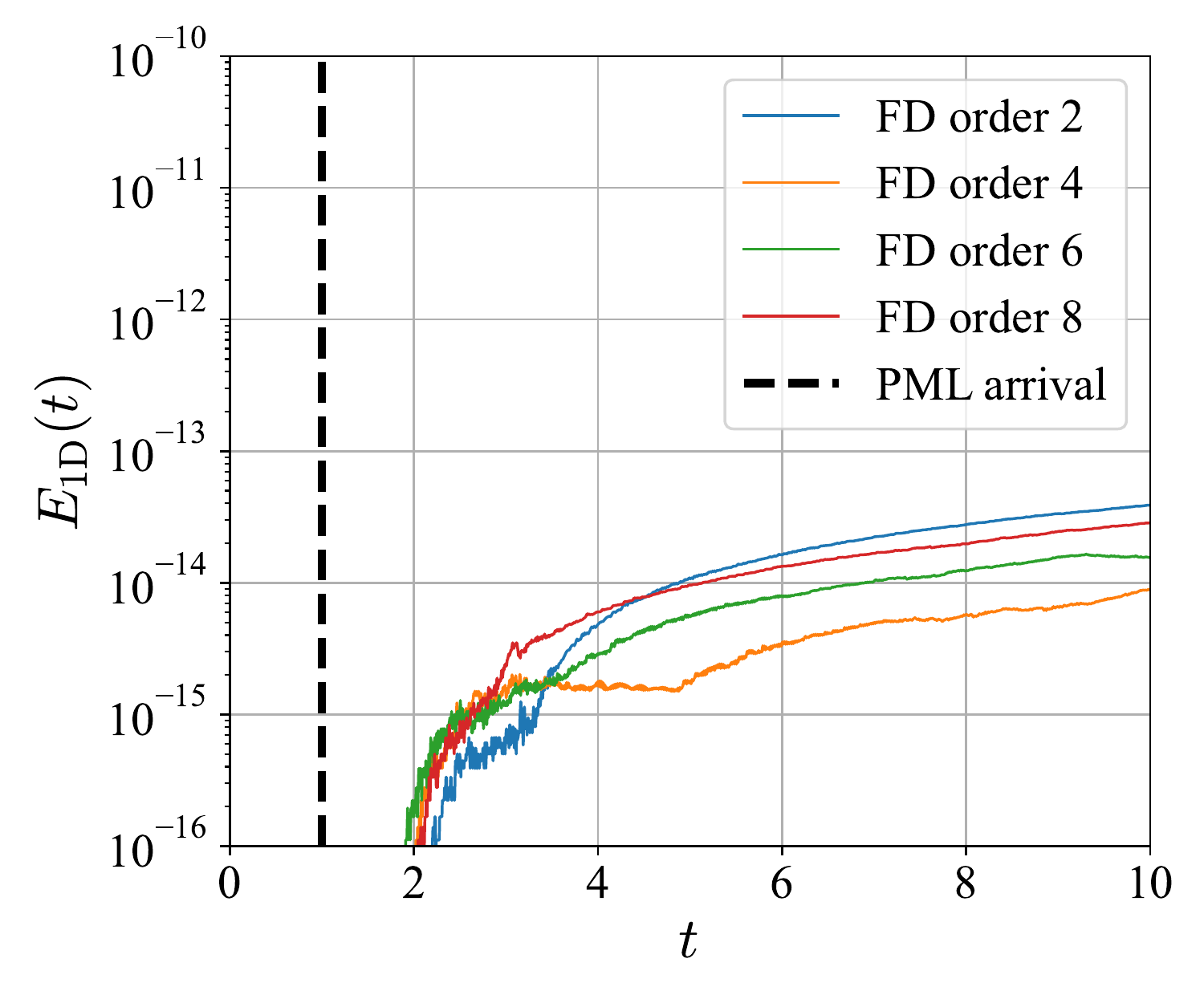}\label{fig:6a}}    
    \subfloat[]{\includegraphics[scale=0.45]{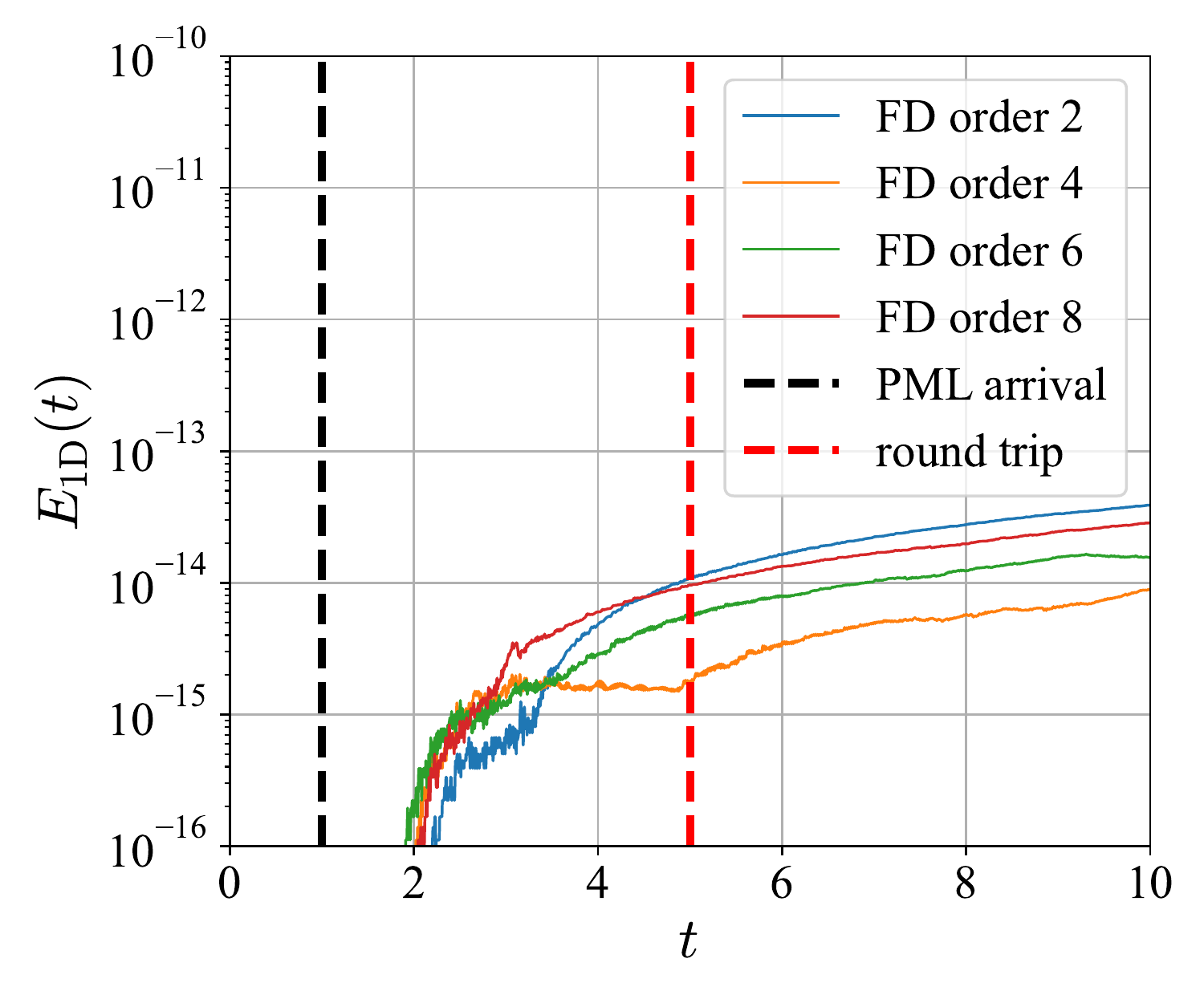}\label{fig:6b}}    
    \caption{Time evolution of the numerical error $E_{\rm 1D}$~\eqref{eq:discrete_error_2} that measures the spurious reflections introduced by the proposed RDPML schemes, for FD spatial discretizations of order 2, 4, 6, and 8. (a) A PML domain of length 10 is used for which waves do return to the physical domain during the time interval $[0,10]$ consider in the plot. (b) A shorter PML domain of length 4 is used, for which waves return to the physical domain at $t=5$ due to the periodic boundary conditions~\eqref{eq:per_bc_1d} employed.}
    \label{fig:reflection_1d}
\end{figure}

\subsection{Two-dimensional wave equation}\label{sec:2d_wave}
This section presents numerical experiments that serve to validate the effectiveness of the proposed high-order RDPML schemes in solving the wave equation in two spatial dimensions. In the lack of a directly available exact wave equation solution, we adopt an analogous experiment to the one shown in \Cref{fig:reflection_1d} for the one-dimensional wave equation.

We here consider the solution $u:\R^2\times\R_+\mapsto\R$ of~\eqref{eqn:wave_1d} for $d=2$ with initial data

% \begin{equation}    
%     g_0(\nex) = \begin{cases}
%         \e^{-140|\nex-\nex_0|^2}, & \text{if } |\nex-\nex_0| \leq 0.5,\\
%         0, & \text{if } |\nex-\nex_0| > 0.5,
%     \end{cases}\quad\nex_0=(-1,-1)\in \mathbb R^{2}, \label{eq:2d_bump}
% \end{equation}
\begin{equation}    
    g_0(\nex) = \begin{cases}
        \e^{-140|\nex-\nex_0|^2}, & \text{if } |\nex-\nex_0| \leq 0.5,\\
        0, & \text{if } |\nex-\nex_0| > 0.5,
    \end{cases}\quad{\nex_0=(-2,-2)}\in \mathbb R^{2}, \label{eq:2d_bump}
\end{equation}
% and $g_1(\nex) =0$ for $\nex\in\R^2$. As in the one-dimensional experiment presented above, we  approximate~$u$ using RDPML schemes~\eqref{eq:RDPML_2D} of orders $2p=2,4,6$ and $8$, over a bounded physical domain given in this case by $\Omega_{0} =(-2,0)\times(-2,0)$, which has attached a finite-size L-shaped PML region given by $\Omega_{\rm PML} = (0,0.36)\times(-2,0.36)\cup (-2,0.36)\times(0,0.36)$. The physical domain is discretized by uniform grids given by $\{\nex_{j,k} =(-2+hj,-2+hk), j,k=0,\ldots, N\}$, where $h = 2/N$, $N\in\mathbb N$. At the boundary of the computational domain $\Omega=(-2,0.36)\times(-2,0.36)$, which encompasses both  $\Omega_0$ and $\Omega_{\rm PML}$, we apply the following periodic boundary conditions:
and $g_1(\nex) =0$ for $\nex\in\R^2$. As in the one-dimensional experiment presented above, we  approximate~$u$ using RDPML schemes~\eqref{eq:RDPML_2D} of orders $2p=2,4,6$ and $8$, over a bounded physical domain given in this case by $\Omega_{0} = {(-4,0)\times(-4,0)}$, which has attached a finite-size L-shaped PML region given by $\Omega_{\rm PML} = {(0,0.2)\times(-4,0.2)}\cup {(-4,0.2)\times(0,0.2)}$. The physical domain is discretized by uniform grids given by $\{\nex_{j,k} = {(-4+hj,-4+hk)}, j,k=0,\ldots, N\}$, where $h = {4/N}$, $N\in\mathbb N$. At the boundary of the computational domain $\Omega={(-4,0.2)\times(-4,0.2)}$, which encompasses both  $\Omega_0$ and $\Omega_{\rm PML}$, we apply the following periodic boundary conditions:
% \begin{equation*}
%     \begin{aligned}
% u_{N+j,k}=&~u_{j,k},&& j=0,\ldots,p-1,\quad k=0,\ldots N,\\
% u_{j,N+k}=&~u_{j,k},&& j=0,\ldots,N,\quad k=0,\ldots,p-1,\\
% \phi^{(x, r)}_{N+j, k}=&~\phi^{(x, r)}_{j, k}, \remove{= \psi^{(x, r)}_{N+j, k}=\psi^{(x, r)}_{j, k}=0},&& j=0,\ldots,p-r, r=1,2,\ldots,p,  k=0, \ldots N,\\
% \phi^{(y, r)}_{j, N+k}=&~\phi^{(y, r)}_{j, k},\remove{=\psi^{(y, r)}_{j, N+k}=\psi^{(y, r)}_{j, k}=0},&& k=0,\ldots,p-r, r=1,2,\ldots,p,  j=0, \ldots N. \\
% \vah{\phi^{(y, r)}_{j, N+k}=}&~\phi^{(y, r)}_{j, k},&& j=0,\ldots,p-r, r=1,2,\ldots,p,  k=0, \ldots N,\\
% \vah{\psi^{(y, r)}_{j, N+k}=}&~\psi^{(y, r)}_{j, k},&& k=0,\ldots,p-r, r=1,2,\ldots,p,  j=0, \ldots N.
% \end{aligned}
% \end{equation*}
{\begin{equation*}
    \begin{aligned}
u_{N+j,k}=&~u_{j,k},&& j=0,\ldots,p-1,\quad k=0,\ldots N,\\
u_{j,N+k}=&~u_{j,k},&& j=0,\ldots,N,\quad k=0,\ldots,p-1,\\
\phi^{(x, r)}_{N+j, k}=&~\phi^{(x, r)}_{j, k}, && j=0,\ldots,p-r, r=1,2,\ldots,p,  k=0, \ldots N,\\
\phi^{(y, r)}_{j, N+k}=&~\phi^{(y, r)}_{j, k},&& k=0,\ldots,p-r, r=1,2,\ldots,p,  j=0, \ldots N, \\
\phi^{(y, r)}_{j, N+k}=&~\phi^{(y, r)}_{j, k},&& j=0,\ldots,p-r, r=1,2,\ldots,p,  k=0, \ldots N,\\
\psi^{(y, r)}_{j, N+k}=&~\psi^{(y, r)}_{j, k},&& k=0,\ldots,p-r, r=1,2,\ldots,p,  j=0, \ldots N.
\end{aligned}
\end{equation*}}

To measure the spurious numerical reflection errors generated by the proposed high-order RDPML schemes, we conduct a comparison between our numerical solution $u_{j,k}$ and a reference solution $u^{\rm ref}_{j,k}$. The reference solution is generated using the corresponding FD scheme of order $2p$ over the larger domain $\Omega_{\rm ref}={(-5,1)\times(-5,1)}$, which is discretized in such a manner that the nodes lying within $\Omega_0$ align with those where $u_{j,l}$ is defined. Specifically, the two solutions are compared using the following expression:
\begin{equation}\label{eq:discrete_error_3}
E_{\rm 2D}(t) :=\max_{x_{j,k}\in\overline\Omega_0} |u_{j,k}(t)-u^{\rm ref}_{j,k}(t)|
\end{equation}
at the discrete times resulting from the application of the eight-order RK time integrator used above, with a time step $h/16$.
The domains utilized in this example are depicted in \Cref{fig:domains}.  

\Cref{fig:errors_2D_evol} displays the evolution of $E_{\rm 2D}$ in~\eqref{eq:discrete_error_3} throughout the time interval ${[0, 3]}$ using the grid size ${h = 2^{-7}}$. The black line indicates the time at which the wave initially reaches the inner PML boundary, while the red line marks the moment when the waves start to re-enter $\Omega_0$ due to the periodic boundary conditions used.  Clearly, the overall amplitudes of the reflection and transmission errors remain close to the machine precision, demonstrating the effectiveness of the proposed schemes in minimizing these errors.
\begin{figure}[ht!]
    \centering
    \subfloat[]{\includegraphics[width=0.4\linewidth]{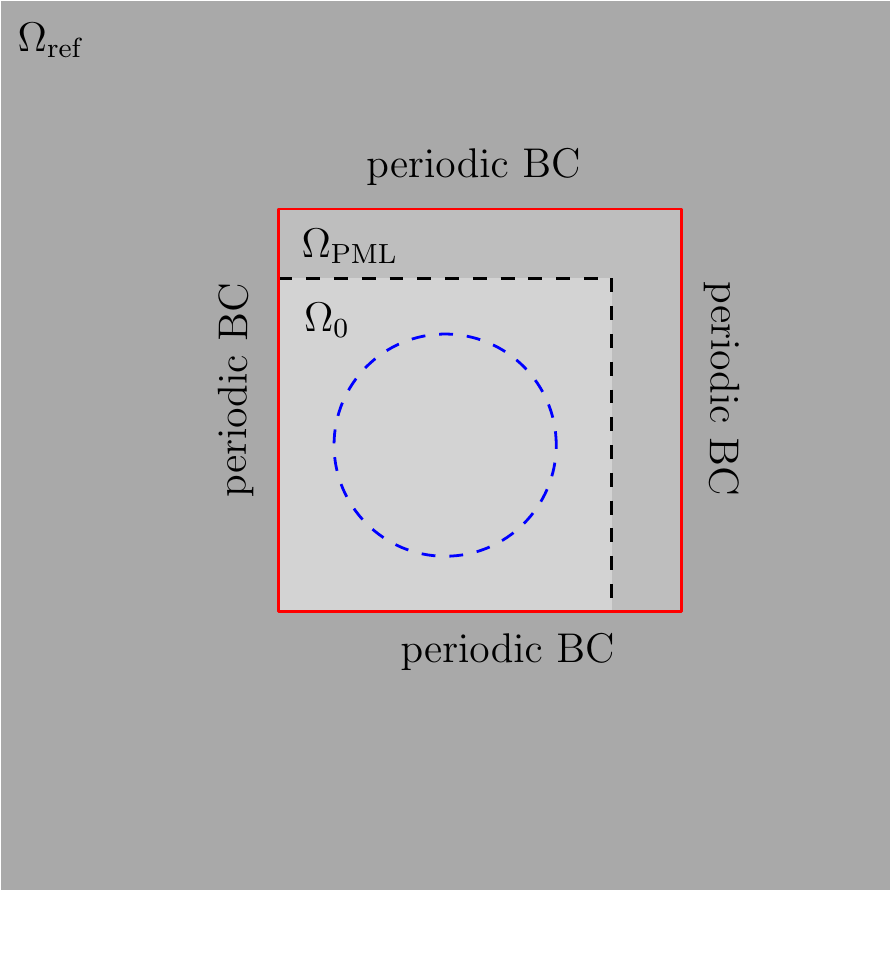}\label{fig:domains}}\qquad 
    \subfloat[]{\includegraphics[scale=0.45]{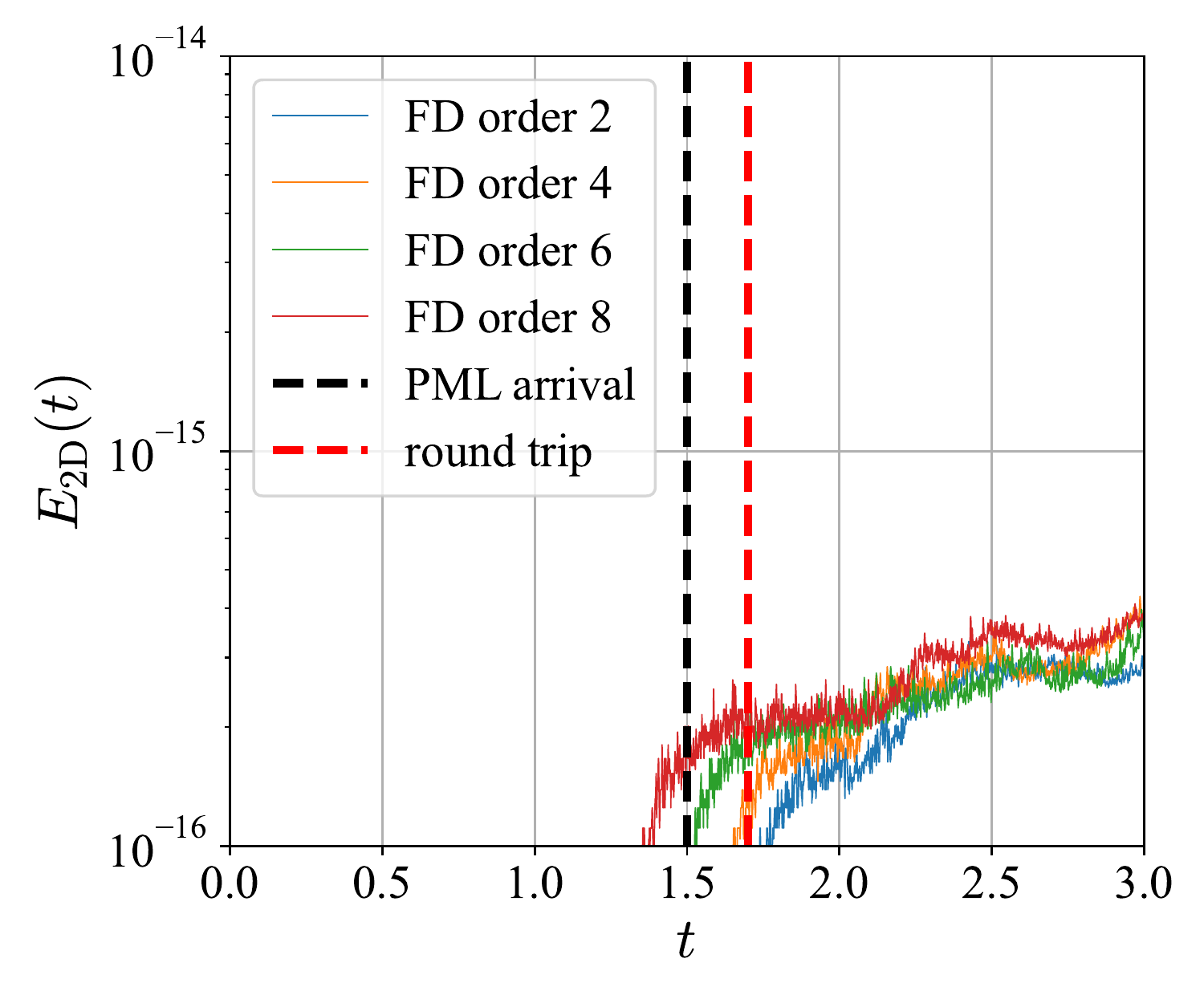}\label{fig:errors_2D_evol}}
    \caption{{(a) Diagram illustrating the domains utilized in the two-dimensional experiments of Section~\ref{sec:2d_wave}. The dotted blue curve marks the boundary of the support of the initial data~\eqref{eq:2d_bump} used in these examples. (b) Time evolution of the numerical error $E_{\rm 2D}$~\eqref{eq:discrete_error_3}, which quantifies the spurious reflections arising from the implementation of the proposed RDPML schemes for solving the two-dimensional wave equation. The error is evaluated for FD spatial discretizations of order 2, 4, 6, and 8, with a mesh refinement of $h=2^{-7}$.}}
    \label{fig:errors_2D}
\end{figure}
\begin{figure}[ht]
    \centering
    \includegraphics[scale=0.42]{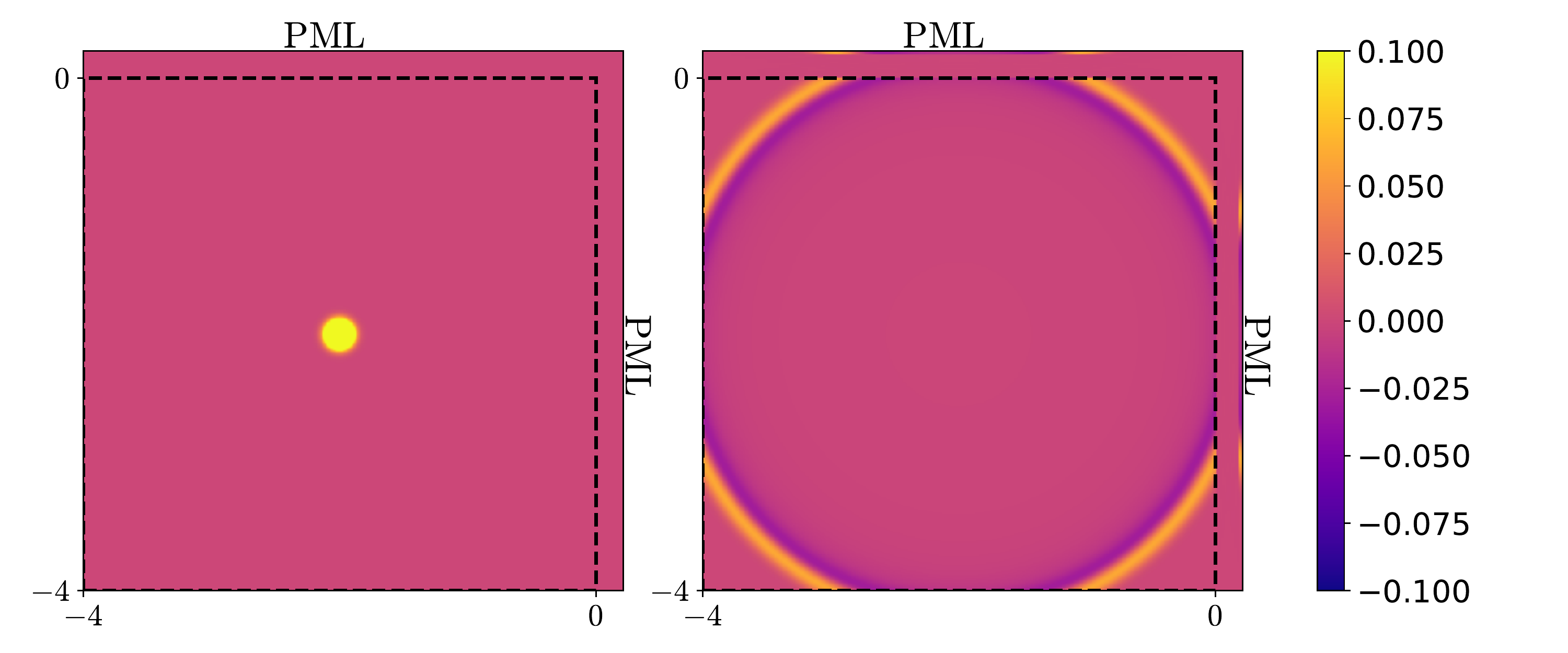} 
    \caption{{Color plots illustrating the initial condition (left) and the fourth-order RDPML solution at time $t=2.2$ (right) for the wave equation in two dimensions, corresponding to the experiment presented in \Cref{fig:errors_2D}. The width of the PML domain is just 5\% the width of the physical domain.\label{fig:epxeriment1_graphics}}}
\end{figure}

{Even though the reflectionless property of the PML ensures the absence of numerical reflection errors generated at the interface between the physical domain and the layer, residual waves can still re-enter the physical domain through the boundary condition at the end of the PML domain (in this case periodic BC's). However, our numerical experiments show that this error can also be driven to machine precision when refining the grid size $h$. Figure \ref{fig:pml5_convergence} shows the history of reflection errors $E_h$, defined as
\begin{equation}\label{eq:discrete_error_2d}
E_h := \max_{t_\ell=h\ell/16\in[0, 3]}\ \max_{x_{j, k}\in\overline\Omega_0} |u_{j, k}(t_\ell)-u^{\mathrm{ref}}(x_{j, k},t_\ell)|,\end{equation}
for grid sizes $h=2^{-n}$, $n=3, \dots, 7$. The experiments are performed with same initial conditions and domain sizes values as the experiment of \Cref{fig:errors_2D}. It is worth  noting that the size of the PML is only 5\% of the size of the physical domain.
}
\begin{figure}
    \centering
    \includegraphics[scale=0.45]{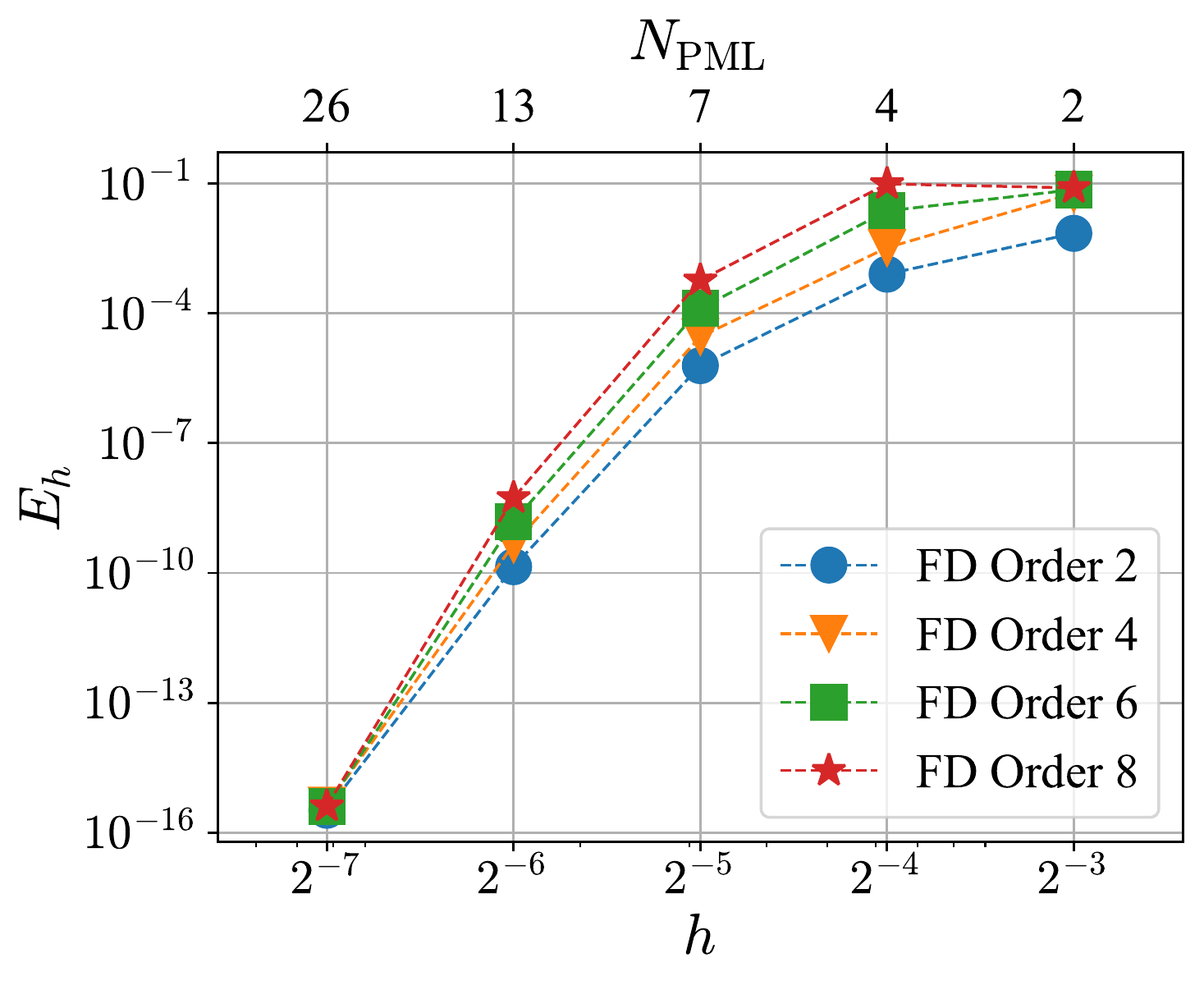}
    \caption{{History of the reflection error $E_h$ of the approximate solutions using the RDPML schemes of order 2, 4, 6, and 8 for the two-dimensional wave equation, where the errors are computed using the formula \eqref{eq:discrete_error_2d}. The marks in the graph correspond to the grid size $h$ (bottom x-axis) and to the number of PML nodes in each direction $N_{\rm PML}$ (top x-axis) used for each experiment.}}
    \label{fig:pml5_convergence}
\end{figure}

\subsection{Waveguide\label{sec:waveguide}}
To highlight the improved dispersion errors achieved by the high-order schemes compared to the classical second-order scheme, this section focuses on applying the fourth-order RDPML scheme to solve a simple waveguide problem. Our problem consists in finding $u:\Omega_\infty\times\R_+\to\R$, with $\Omega_\infty := (-30,\infty)\times(0,2)$, such that
\begin{subequations}
\begin{align}
    \frac{\partial^2 u}{\partial t^2}(\nex, t)-
    \Delta u(\nex, t) & = 0, \quad 
    t>0, \; \nex \in \Omega_\infty
    \label{eqn:waveguide_equation}
\end{align}
 subject to the homogeneous initial conditions:
\begin{equation}
u(\nex, 0) = 0 \andtext \frac{\partial u}{\partial t}(\nex, 0) = 0,    \qquad \nex\in\Omega_\infty,
\label{eqn:waveguide_ics}
\end{equation}
and the Dirichlet boundary conditions:
\begin{align}
    u(\nex,t) =&~ 0,&& \nex\in (-30,\infty)\times \{0,2\},\label{eq:hom_dir_BC}\\
    u(\nex,t) =&~  \sin\left(\frac{x_2\pi}2\right)
    \frac{\cos(\omega t)}{1+\e^{-5(t-1)}},&& \nex=(x_1,x_2)\in \{-30\}\times (0,2).\label{eq:Dir_source}
\end{align}
\label{eq:waveguide_problem}
\end{subequations}

To solve the problem using our RDPML schemes, we discretize an elongated rectangular domain $\Omega_0=(-30, 0)\times (0,2)$ with an attached PML region given by $\Omega_{\mathrm{PML}} = (0, 5) \times (0, 2)$. On the right boundary of the PML, ${5}\times(0,2)$, we impose a homogeneous Dirichlet boundary condition on the RDPML solution $u_{j,k}$. {A detailed description of the boundary conditions used to truncate the PML is given in \Cref{sec:boundary_conditions} in the supplementary material. In particular, to truncate the second and fourth-order RDPML with homogeneous Dirichlet boundary conditions, 2D versions of equations \eqref{eq:boundary_d2} and \eqref{eq:boundary_d4} were used, respectively.} 
%\remove{, as well as the corresponding auxiliary variables $\varphi_{j,k}$ and $\psi_{j,k}$ at the respective nodes}. 
%\remove{Similarly, we apply the same Dirichlet boundary condition to the FD solution $u_{j,k}$ on the top and bottom boundaries of $\Omega_0$, in accordance with~\eqref{eq:hom_dir_BC}. The same boundary condition is also imposed on $u_{j,k}$, $\varphi_{j,k}$, and $\psi_{j,k}$ on the top and bottom boundaries of $\Omega_{\rm PML}$}. 
For discretization, we employ a grid $\{\nex_{j, k} = (-30 + h_{1}j, h_{2}k), 0\leq j\leq 200,0\leq k\leq 10\}$ for $\Omega_0$, where $h_{1} = 0.15$ and $h_{2} = 0.2$. The same resolution is utilized for the discretization of $\Omega_{\mathrm{PML}}$. To evolve the system over time, we employ the explicit eight-order RK method utilized in the aforementioned experiments, employing a time step of $\min\{h_{1}, h_{2}\}/8$.
% To maintain the scheme's order of convergence, we utilize one-sided stencils at the left boundary to implement the Dirichlet boundary condition. However, employing this approach at the right boundary of $\Omega_{\rm PML}$ would necessitate the inclusion of additional auxiliary functions, which, though only required at the boundary, are defined over the entire domain $\Omega_{\rm PML}$. In order to circumvent the associated additional cost, we set $\sigma=0$ in the last \(4\) nodes of the PML.

Considering that the source term in~\eqref{eq:Dir_source} becomes time-harmonic exponentially fast as $t$ increases and assuming the applicability of some form of the limiting amplitude principle in this case, it is expected  that the exact solution $u$ will converge to the time-harmonic waveguide mode given by
\begin{align}\label{eq:waveguide_mode}
    v(\nex,t) = \sin\left( \frac{x_2\pi}{2}\right)\sin(\kappa x_1)\cos(\omega t),\qquad \nex\in\Omega_\infty,
\end{align}
where $\kappa = \sqrt{\omega^2 - \pi^2/4}$. We utilize this mode to observe the effect of dispersion errors on the RDPML solutions. Indeed, \Cref{fig:waveguide_a} presents the second-order and fourth-order solutions alongside the exact mode~\eqref{eq:waveguide_mode} for a large time $t=63$ and $\omega=5$. The vertical blue lines displayed in these plots correspond to two of the zero-level curves of the exact mode~\eqref{eq:waveguide_mode}. They are included to enable a qualitative comparison of the impact of dispersion errors in the numerical solutions. From the plots, it can be observed that the second-order RDPML solution deviates by approximately half a wavelength, whereas the fourth-order solution closely aligns with the time-harmonic mode. Additionally, the dashed line indicates the left boundary $\{0\}\times(0,2)$ of the PML. Lastly, \Cref{fig:waveguide_b} showcases the evolution of the error~\eqref{eq:discrete_error_3} for the waveguide problem, demonstrating the achievement of the reflectionless property of the proposed PML in this specific waveguide example.

\begin{figure}[ht!]
   \centering
        \subfloat[]{\includegraphics[trim={2cm 0cm 0cm 0cm}, scale=0.45]{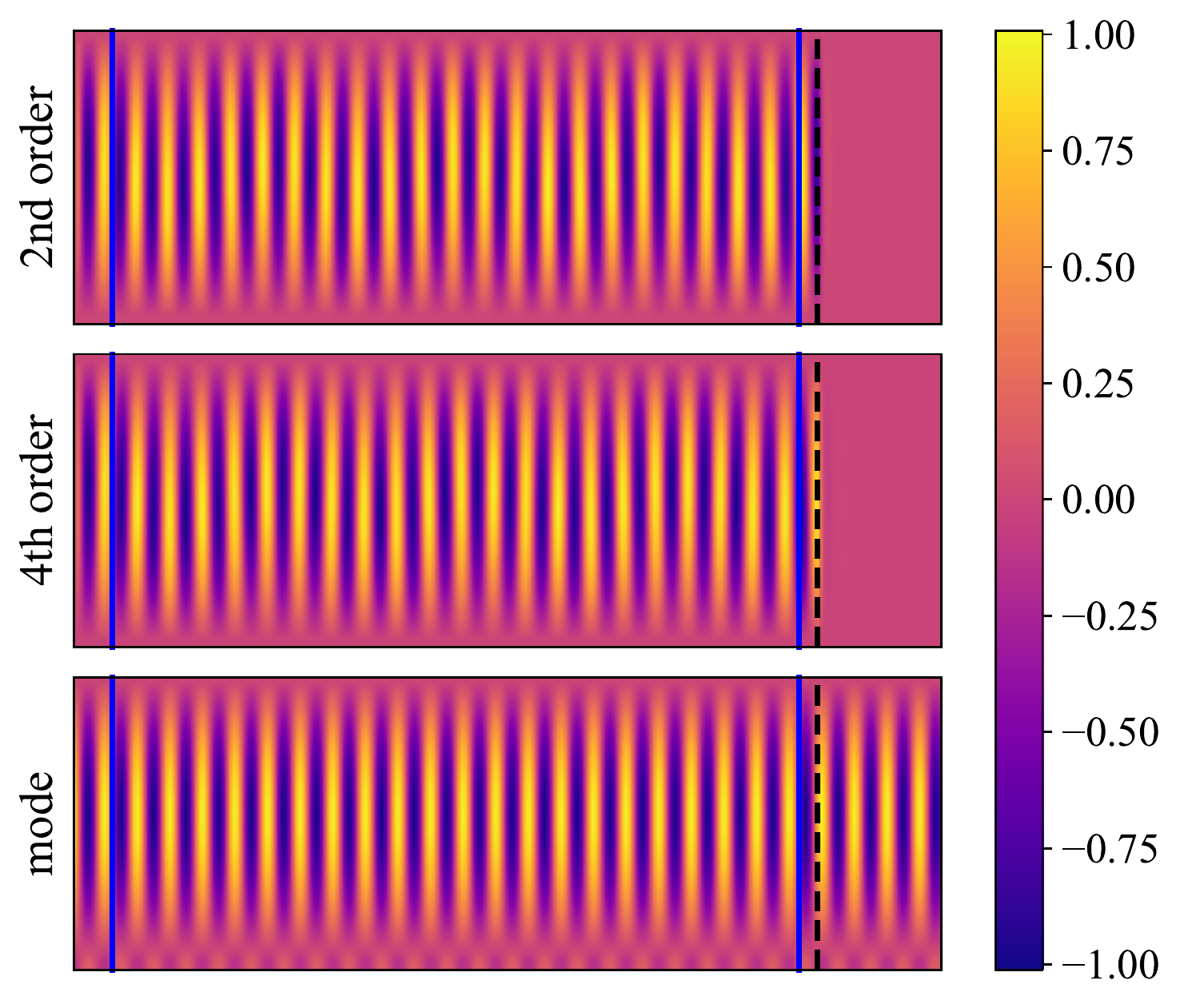}\label{fig:waveguide_a}}
        \subfloat[]{\includegraphics[trim={0cm 0cm 0cm 0cm}, clip, scale=0.45]{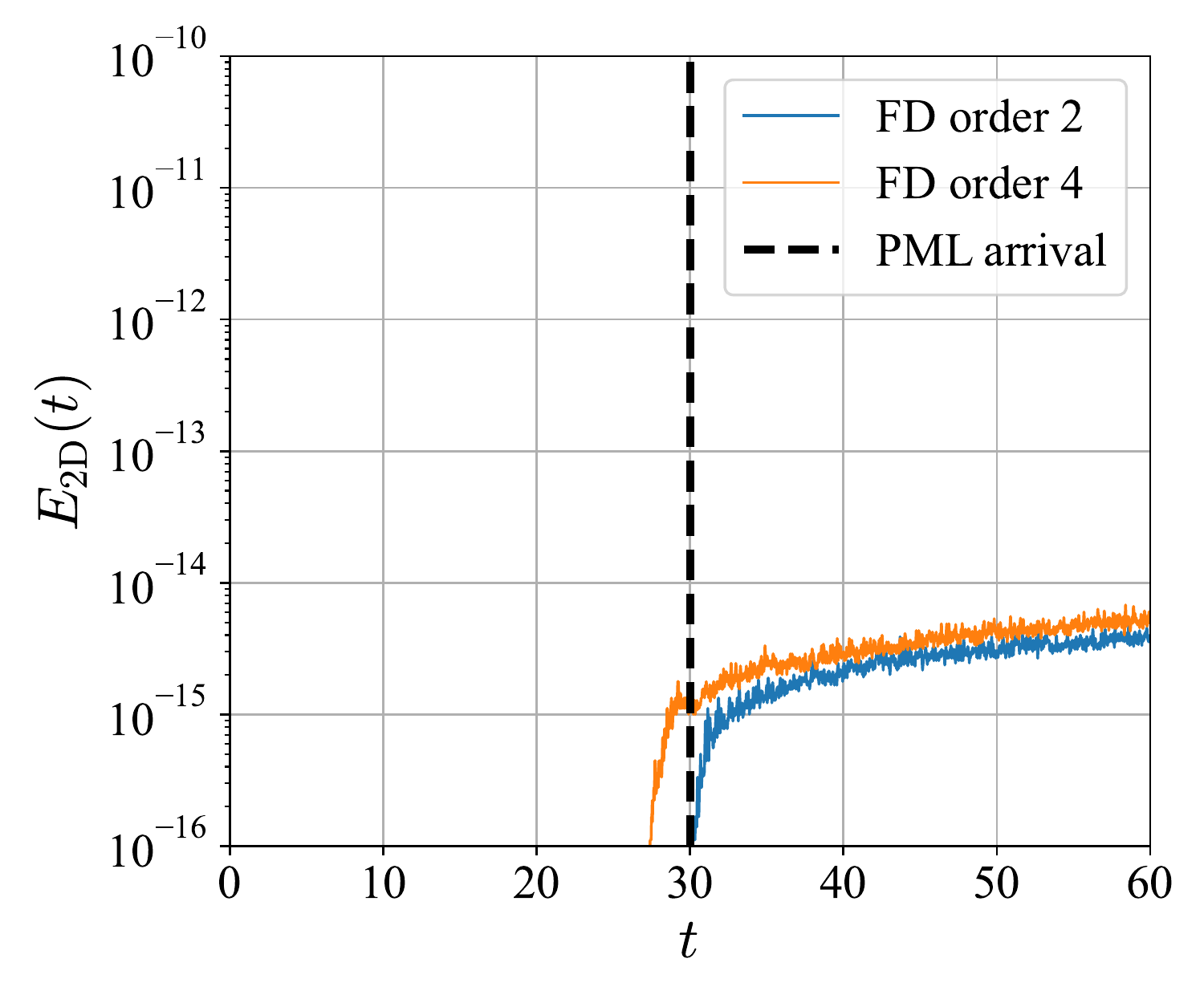}\label{fig:waveguide_b}}
   \caption{Second- and fourth-order RDPML solutions of the waveguide problem~\eqref{eq:waveguide_problem}. (a) Color plots of the numerical solutions together with the time-harmonic mode~\eqref{eq:waveguide_mode} at time $t=63$. The time-harmonic mode~\eqref{eq:waveguide_mode} serves as a reference to visually evaluate the impact of dispersion errors. These errors are considerably more pronounced in the second-order solution compared to the fourth-order solution. (b) Time evolution of the error $E_{\rm 2D}$ that measures the amplitude of the spurious reflections in the RDPML solutions. The machine precision level of $E_{\rm 2D}$ demonstrates the reflectionless property of the proposed FD schemes.}
   \label{fig:dispersion_error}
\end{figure}

{

\subsection{Evanescent waves\label{sec:evanescent}}

Classical PML techniques, along with the proposed RDPML presented herein, are specifically designed to target the absorption of slowly- and non-decaying propagating waves. These waves can be analyzed by considering ``modes" in the form of $v_{\boldsymbol{\xi}}(\nex, t) := \mathrm{e}^{i \boldsymbol{\xi}\cdot\nex - i\omega t}$ where $\boldsymbol{\xi}=(\xi_1,\xi_2)\in\mathbb{R}^2$ satisfies $\xi_1^2+\xi_2^2=\omega^2>0$. However, more general solutions of the wave equation also include evanescent modes $v_{\boldsymbol{\xi}}$ where $\boldsymbol{\xi}=(\xi_1,\xi_2)$ is complex, and these modes are not necessarily absorbed by the PML. For example, modes  $v_{\bol\xi}$ with $\bm{\xi} = (\xi_1, \xi_2)$  such that $\imag\xi_1 > 0$ decay exponentially fast within the physical domain $\Omega_0$ in the positive $x_1$-direction. However, \Cref{thm:reflectionless} does not guarantee their decay as they propagate within $\Omega_{\mathrm{PML}}$, since $\xi_1 \not \in \mathcal{K}$. Indeed, \Cref{fig:rho_amplitude}, which displays the amplitude of the damping factor $\rho(\sigma, \xi, \omega)$ for $\omega=5$, $\sigma=2/h$, $h=0.1$, and $\xi\in\mathbb{C}$ in the region $0\leq \mathrm{Re}(\xi)\leq \pi/h$, $-10\leq\mathrm{Im}(\xi)\leq10$, shows that $|\rho(\sigma, \xi, \omega)| > 1$ in parts of this region where $\mathrm{Im}(\xi) > 0$. This indicates that the corresponding evanescent modes are actually \emph{amplified} by the RDPML in this case.

To mitigate the undesired effects of spurious reflections and instabilities caused by such evanescent modes~\cite{diaz2006,dehoop2002}, a common strategy is to position the PML sufficiently far from the sources, with the hope that these modes have been adequately suppressed before entering the PML region. However, this approach comes at the cost of having to consider large computational domains. Few methods exist to suppress spurious reflections arising from both propagating and evanescent waves, while also avoiding the use of unnecessarily large computational domains~\cite{hagstrom2009,kreiss2016}. In this section, we briefly delve into the use of a two-stage RDPML that combines complex and real grid stretchings~\cite{kreiss2016}. Specifically, the initial stage of our two-stage RDPML considers $\sigma>0$, while the later stage employs $\sigma=0$. This choice ensures that lingering evanescent waves naturally decay before interacting with the boundary condition causing errors upon re-entering the physical domain.
 
% There are several ways to tackle this type of waves, here we show one way of addressing them, which is to implement a 2-stage PML, where the first half of the RDPML is left untouched, but we let $\sigma=0$ on the second half so that evanescent waves decay naturally before interacting with the boundary condition and causing errors when they re-enter the physical domain. 

To assess the effectiveness of the proposed two-stage approach, we revisit the waveguide problem outlined in Section~\ref{sec:waveguide}, but with a slightly modified setup displayed in~\Cref{fig:evanescent_plot}. In this new problem setup, the PML region is positioned much closer (approximately 60 times) to the left boundary, where the Dirichlet boundary condition is imposed. Consequently, the resulting physical domain is defined as $\Omega_0 = (-0.5, 0) \times (-2, 0)$. To evaluate the impact of real grid stretching, we examine two  different boundary conditions and two fourth-order RDPMLs. In detail, we take 
\begin{subequations}\label{eq:waveguide_bcs}
    \begin{align}
    u_1(\nex,t) =&\begin{cases} 0,& \nex\in (-0.5,\infty)\times \{0,2\}.\\
    \displaystyle\sin\left(\frac{x_2\pi}2\right)\frac{\cos(\omega t)}{1+\e^{-5(t-1)}},& \nex=(x_1,x_2)\in \{-0.5\}\times (0,2),\end{cases} \label{eq:dirBC1}\\
    u_2(\nex,t) =&\begin{cases} 0,& \nex\in (-0.5,\infty)\times \{0,2\},\\
     \displaystyle\sin\left(2\pi x_2\right)
    \frac{\cos(\omega t)}{1+\e^{-5(t-1)}},& \nex=(x_1,x_2)\in \{-0.5\}\times (0,2),\end{cases} \label{eq:dirBC2}
\end{align}\end{subequations}
as boundary conditions and RDPMLs given in terms of the damping functions
\begin{subequations} \label{eq:sigma_evanescent}
\begin{align}
\sigma^{(1)}_j & = \begin{cases}
0 & \text{if } j < 0,\\
2/h & \text{if } j \geq 0,
\end{cases}\label{eq:sigma_1} \\
\sigma^{(2)}_j & = \begin{cases}
0 & \text{if } j < 0 ,\\
2/h & \text{if } 0 \leq j \leq \lceil N_{\rm PML}/2\rceil,\\
0 &\text{if } \lceil N_{\rm PML}/2\rceil < j \leq N_{\rm PML},
\end{cases}\label{eq:sigma_2}
\end{align}
\end{subequations}
where $N_{\rm PML}$ denotes the total number of nodes in the discretization of the PML domain in the $x_1$-direction and {$h=0.08$ is the grid size}. This setup allows limited space for evanescent waves to decay before entering the PML domain, resulting in a much larger amplitude compared to the experiment in Section~\ref{sec:waveguide}. Additionally, note that $\sigma^{(1)}$ is the same damping function used in Section~\ref{sec:waveguide}, leading to a pure complex-grid stretching.

 \Cref{fig:trav_exp,fig:evan_exp}, which illustrate the error~\eqref{eq:discrete_error_3} arising from both stretchings for the boundary conditions \eqref{eq:dirBC1} and \eqref{eq:dirBC2}, respectively, reveal that in the case of complex stretching ($\sigma^{(1)}$), although no numerical reflections occur at the interface between the physical domain and the PML, numerical errors emerge over time due to the inadequate absorption of evanescent waves (see also~\Cref{fig:evanescent_plot}). These waves eventually interact with the Dirichlet boundary condition at the right boundary, giving rise to residual waves that re-enter the physical domain. Conversely, the two-stage stretching approach ($\sigma^{(2)}$) significantly reduces the spurious errors caused by evanescent waves, even though propagating waves are damped only in half of the PML domain.

\begin{figure}[ht!]
    \centering
    \hspace{15pt}
    \subfloat[]{\includegraphics[scale=0.45, trim={2cm 0cm 0cm 0cm}]{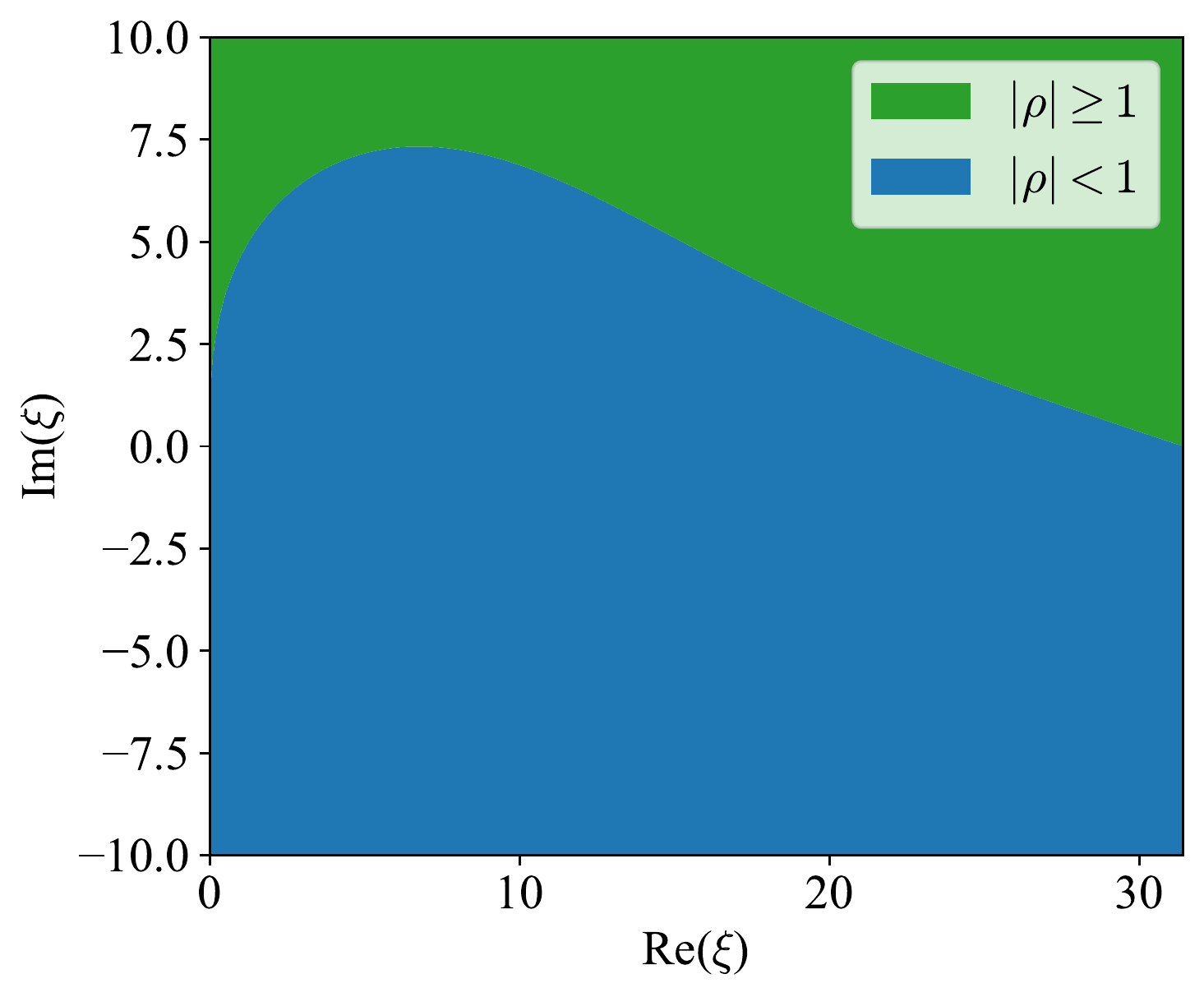}\label{fig:rho_amplitude}}\hspace{30pt}
    \subfloat[]{\includegraphics[scale=0.37, trim={0cm -2cm 0cm 0cm}]{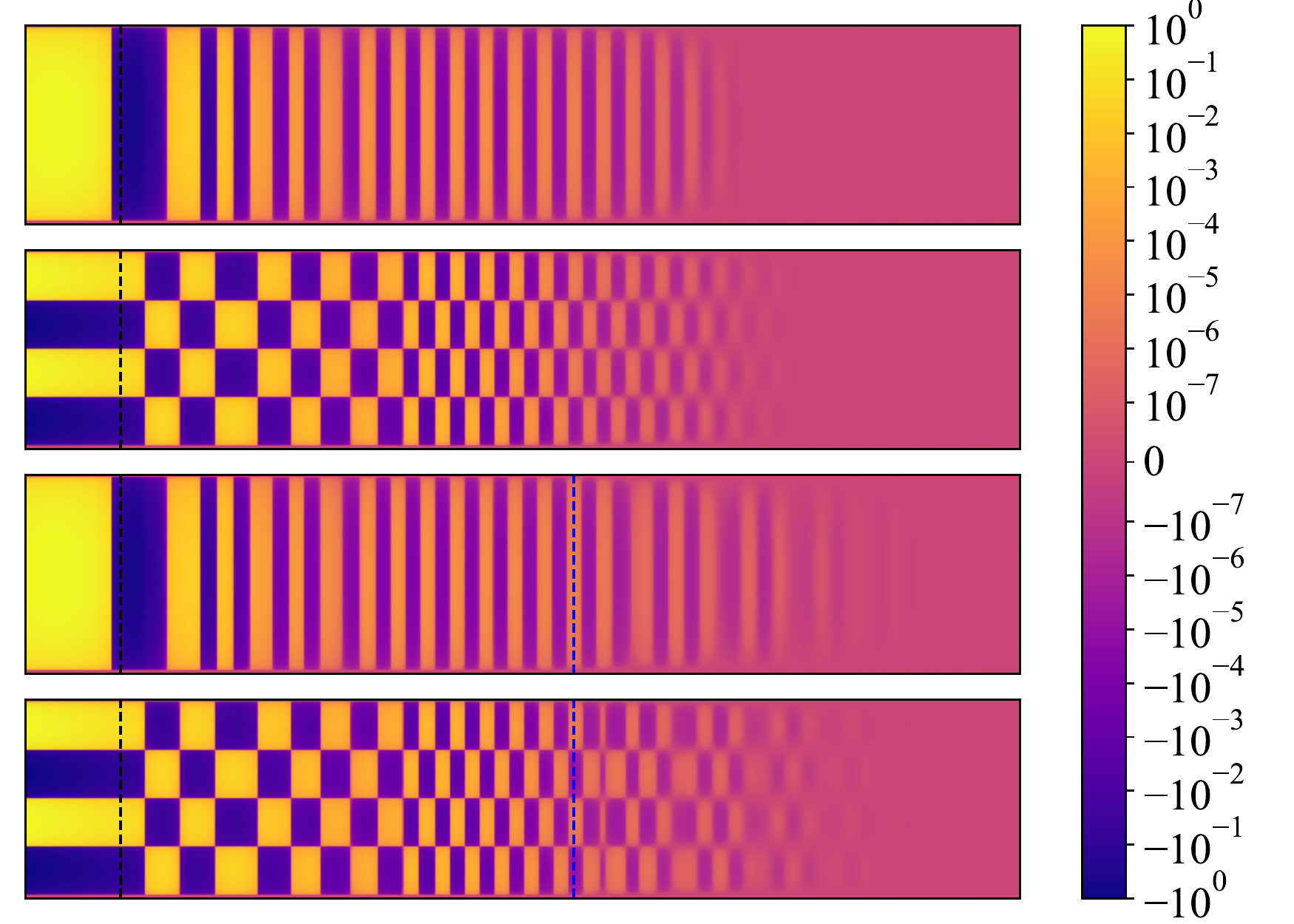}\label{fig:evanescent_plot}}\\
    \subfloat[]{\includegraphics[scale=0.45, trim={2cm 0cm 0cm 0cm}]{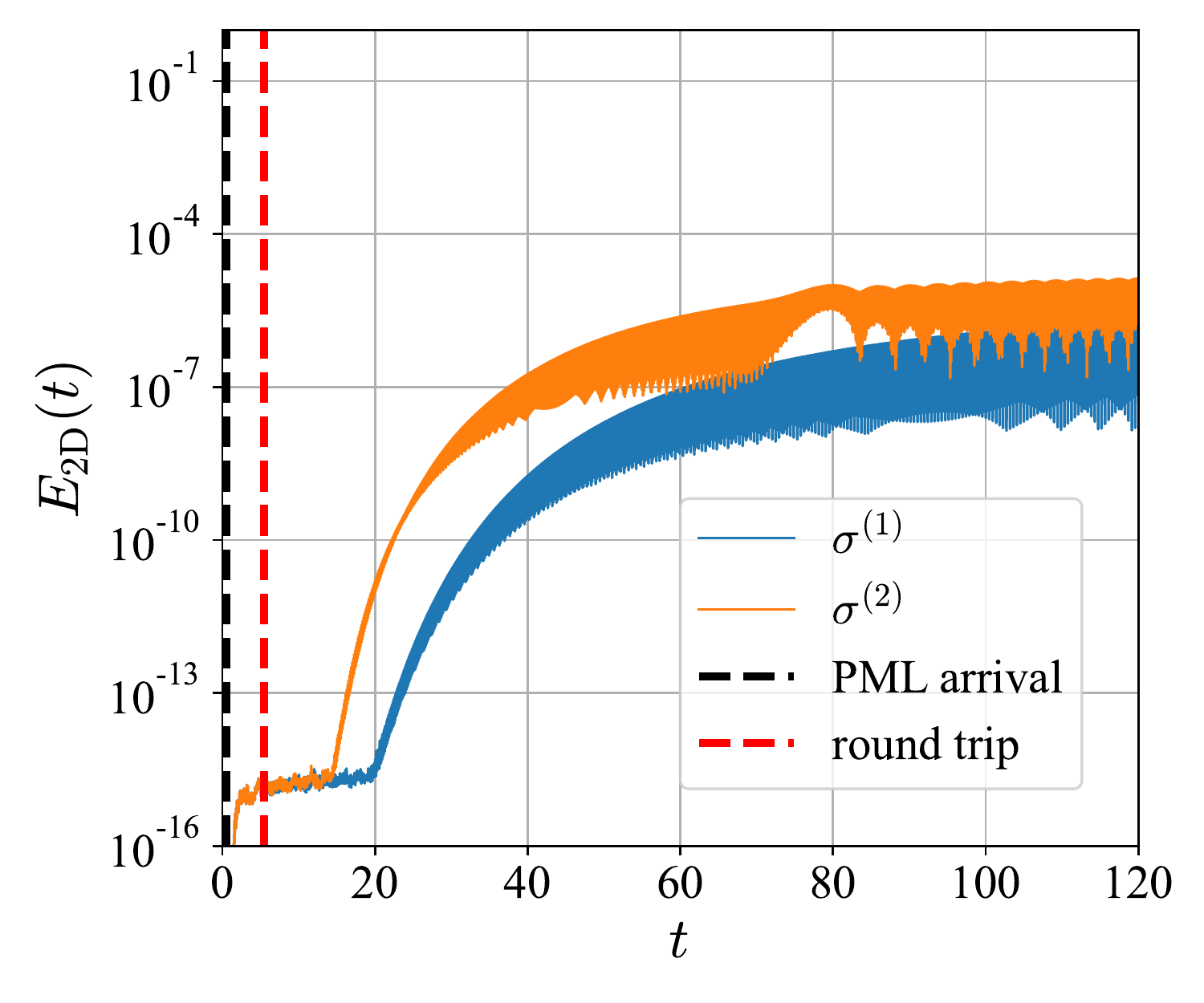}\label{fig:trav_exp}}
    \subfloat[]{\includegraphics[scale=0.45]{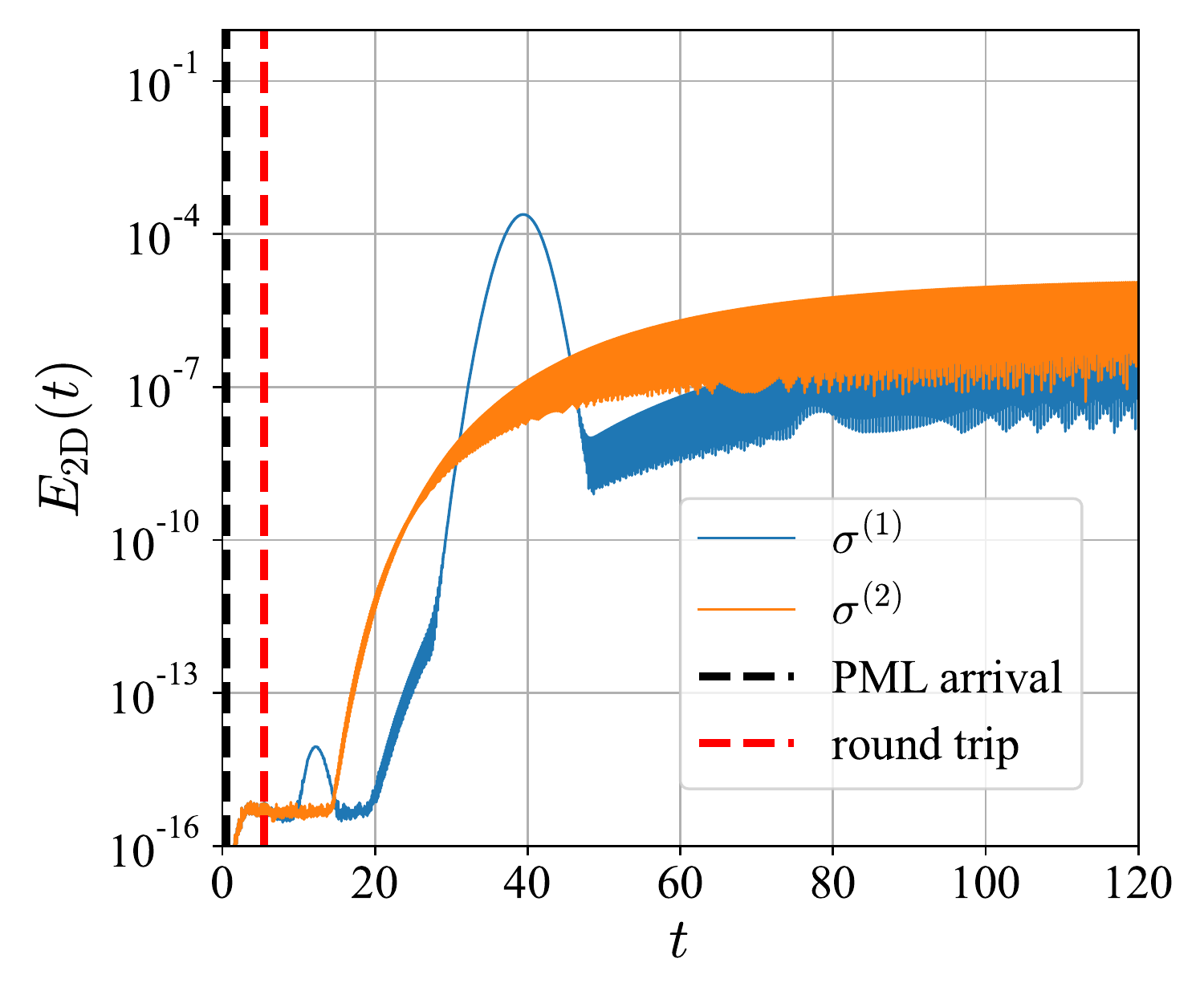}\label{fig:evan_exp}}
    \caption{{(a) Amplitude of decaying factor $\rho$~\eqref{eq:rho}, for $\omega=5$, $h=0.1$, $\sigma=2/h$, and complex $\xi$ values. (b) Plot of fourth-order RDPML solutions of the short waveguide problem of Section~\ref{sec:evanescent} with boundary conditions~\eqref{eq:waveguide_bcs} and damping functions~\eqref{eq:sigma_evanescent} given by $u_1$ and $\sigma^{(1)}$ (1st top), $u_2$ and $\sigma^{(1)}$ (2nd top), $u_1$ and $\sigma^{(2)}$ (3rd top), and $u_2$ and $\sigma^{(2)}$ (4th top), respectively. The vertical black line marks the left boundary of the PML, while the blue line marks the PML stage change. %Instabilities arise for both boundary conditions when using $\sigma^{(1)}$, and are partially mitigated by using $\sigma^{(2)}$ instead.
    (c)-(d)~Time evolution of $E_{\rm 2D}$~\eqref{eq:discrete_error_3} corresponding to the short waveguide problem using boundary conditions $u_1$ (c) and $u_2$ (d) in~\eqref{eq:waveguide_bcs}, and the damping functions $\sigma^{(1)}$ and $\sigma^{(2)}$ in~\eqref{eq:sigma_evanescent}.}} 
\end{figure}
}

\section{Conclusions and ongoing work}\label{sec:conclusion}

We presented a family of RDPMLs specifically designed for arbitrarily high-order  FD spatial discretizations of the scalar wave equation. These RDPMLs were shown to be highly effective in achieving geometric damping of incoming waves at the discrete level. To further validate their performance, we conducted a series of numerical experiments that demonstrated both the high-order accuracy and reflectionless property of the proposed schemes.

However, these achievements come at the expense of increased complexity. The equations governing the RDPMLs introduce additional local auxiliary variables which, when transformed into the frequency domain (as discussed in Section \ref{sec:linear_system}), result in a ``non-local boundary condition". Furthermore, the perfect match property at the discrete level implies that the developed layers are tailored to the specific PDE (in this case, the scalar wave equation) and the chosen high-order FD spatial discretization.

The exploration of RDPMLs for other hyperbolic equations and discretization schemes presents a wide range of possibilities for future research. While the schemes presented in this paper are specifically designed for a particular equation, the underlying theoretical tool of discrete complex analysis is not restricted to a single equation. Its applicability can be extended to investigate the proposed future work. In fact, we have already achieved success in developing a RDPML for continuous piecewise linear Finite Element Methods for the same equation examined in this paper. This serves as promising evidence for the potential application of the developed techniques in other contexts.
%\section*{Acknowledgements}

% ============================================================================
% \end{document}
\appendix
\section{Continuous limit of the RDMPL equations}\label{As:continuouslimit} In order to provide further insight into the RDPML techniques, in this section we explore the limit case $h\to 0+$ for the 2nd-order scheme in one dimension. Letting $x_j = jh = x$, $j \in \mathbb{Z}$, $h > 0$,  we assume there exist sufficiently smooth functions $\sigma(\cdot)$, $\Phi(\cdot,t)$, $\Psi(\cdot,t)$, and $U(\cdot,t)$ so that $\sigma_j = \sigma(x_j)$, $\Phi_j=\Phi(x_j,t)$, $\Psi_j=\Psi(x_j,t)$, and $U_j=U(x_j,t)$. Using these notations, the 2nd-order RDPML equations~\eqref{eqn:pmltime} can be expressed as:
\begin{align*}
U_{tt}(x,t)=&\frac{U(x+h,t)-2U(x,t)+U(x-h,t)}{h^2}+\\
&\hspace{3cm}\frac{1}{h}(\sigma(x)\Psi(x+h,t)-\sigma(x-h)\Phi(x-h,t)),\nonumber\\
\Phi_t(x,t)=&-\frac{\sigma(x)\Phi(x,t)+\sigma(x-h)\Phi(x-h,t)}{2}-\frac{U(x+h,t)-U(x-h,t)}{2h},\\
\Psi_t(x,t)=&-\frac{\sigma(x-h)\Psi(x,t)+\sigma(x)\Psi(x+h,t)}{2}-\frac{U(x+h,t)-U(x-h,t)}{2h}.
\end{align*}
Expanding the relevant quantities around $h=0$ up to the 2nd order using Taylor series, we obtain:
\begin{subequations}
    \begin{align}
U_{tt}-U_{xx}=&\frac{\sigma}{h}(\Psi-\Phi)+\sigma\Psi_x+(\sigma\Phi)_x+\frac{h}{2}(\sigma\Psi_{xx}-(\sigma\Phi)_{xx})+\mathcal O(h^2),\label{eq:td_1}\\
\Phi_t+\sigma\Phi=&-U_x+\frac{h}{2}(\sigma\Phi)_x+\mathcal O(h^2),\label{eq:td_2}\\
\Psi_t+\sigma\Psi=&-U_x-\frac{h\sigma^2}{2}\frac{(\sigma\Psi_x-\sigma'\Psi)}{\sigma^2}+\mathcal O(h^2).\label{eq:td_3}
\end{align}\label{eq:taylor}
\end{subequations}
Taking the limit as $h\to 0+$ in \eqref{eq:taylor}, we get
$$
\Phi_t+\sigma\Phi = \Psi_t+\sigma\Psi = -U_x
$$
from  \eqref{eq:td_1}  and \eqref{eq:td_2}, as well as
$$
U_{tt}-U_{xx}=\Upsilon+\sigma\Psi_x+(\sigma\Phi)_x
$$
% In order to eliminate the term of order $h^{-1}$ in~\eqref{eq:td_1} and considering
% $$\frac{\sigma}{h}(\psi-\phi)=-\frac{\sigma}{2(\sigma+s)}\left\{(\sigma\phi)'+\sigma^2\left(\frac{\psi}{\sigma}\right)'\right\}+O(h),$$ we introduce a new 
from~\eqref{eq:td_1}, where we have introduced the following new auxiliary variable
$$
\Upsilon:=\lim_{h\to0+}\frac{\sigma}{h}(\Psi-\Phi).
$$
To derive the additional relation required for introducing $\Upsilon$, we subtract~\eqref{eq:td_2} from~\eqref{eq:td_3}, multiply the result by $\sigma/h$, and then take the limit as $h\to 0+$ to obtain:
$$
\Upsilon_t+\sigma\Upsilon=-\frac{\sigma}{2}\left\{(\sigma\Phi)_x+\sigma^2\left(\frac{\Psi}{\sigma}\right)_x\right\} = -\sigma^2\Phi_x,
$$
where we have used the fact that $\Phi=\Psi$. Therefore, the time-dependent continuous RDPML equations are given by
\begin{subequations}
    \begin{align}
U_{tt}-U_{xx}=&\Upsilon+\sigma\Phi_x+(\sigma\Phi)_x,\\
\Phi_t+\sigma\Phi=&-U_{x},\\
\Upsilon_t+\sigma\Upsilon =& -\sigma^2\Phi_x.
\end{align}\label{eq:RDPML_we}
\end{subequations}

Finally, to show that the RDPML can be viewed as a specific numerical approximation of the standard PML concerning a particular set of auxiliary variables, we take the Laplace transform ($\partial / \partial t \rightarrow s$) of equations~\eqref{eq:RDPML_we}. By eliminating the transformed auxiliary variables $\widehat\Phi$ and $\widehat\Upsilon$ and assuming a constant PML coefficient $\sigma$, we obtain:
$$
s^2 \widehat{U}-\frac{1}{S^2} \widehat{U}_{xx}=0, \quad S:=1+\frac{\sigma}{s},
$$
where $\widehat U$ represents the Laplace transform of $U$.

\section{Optimizing the damping factor $\sigma$\label{sm:opti_sigma}}  To address the problem of selecting $\sigma$, we optimize its value assuming it remains constant over the entire PML domain. 
    To perform this optimization, we minimize the amplitude of the attenuation factor, $|\rho(\cdot, \xi, \omega)|$. The closer $|\rho(\cdot, \xi, \omega)|$ is to~$0$, the faster the PML damps the mode $\e^{i(\xi(\omega) x_j-\omega t)}$ as it propagates within the PML domain $j>0$. Among the multiple admissible values of $\xi=\xi_r(\omega)$, which depend on the discrete dispersion relation of the FD scheme, we consider the case when $\xi =\omega$, which corresponds to the dispersion relation of the continuous problem. As discussed in Lemma~\ref{thm:operator_sols}, depending on the order of the scheme, the leading  discrete wavenumer $\xi_r(\omega)$ lies in an $O(h^{2p})$  neighborhood of $\omega$ as $h\to0$, so an optimal $\sigma$ for $\xi=\omega$ provides, in principle, a suitable value for this parameter that remain valid for all the all schemes considered. Taking this in consideration, we then seek to minimize the function
    $$
        \varrho(\sigma) := \left|\rho(\sigma, \omega, \omega)\right|  = \left|\frac{\displaystyle 2 + \frac{i\sigma}{\omega}\left(1 - \e^{-i\omega h}\right)}{\displaystyle 2 + \frac{i\sigma}{\omega}\left(1 - \e^{i\omega h}\right)} \right|.
    $$    
    Solving $\varrho'(\sigma)=0$, where 
    \begin{align}
        \varrho'(\sigma) & = \frac{2\omega \sin(h\omega)(\sigma^2-2\omega^2-\sigma^2\cos(h\omega))}{(\sigma^2+2\omega^2-\sigma^2\cos(h\omega)+2\sigma\omega\sin(h\omega))^2\sqrt{-1+\frac{1}{\frac{1}{2}+\frac{\sigma\omega\sin(h\omega)}{\sigma^2+2\omega^2-\sigma^2\cos(h\omega)}}}}\nonumber
    \end{align}
    we  obtain 
    \begin{align}
        \sigma^*:= \frac{\omega\sqrt{2}}{\sqrt{1-\cos(\omega h)}}>0 \nonumber.
    \end{align}
    Since it can be directly checked that $\varrho''(\sigma^*)>0$ for sufficiently small $\omega h>0$ values, we conclude that $\sigma^*$  minimizes locally the attenuation factor $\varrho$.
    
    Unfortunately, $\sigma^*$ above depends on $\omega$, which renders it rather impractical for time-domain simulations. However, by expanding $h\sigma^*$ in a Taylor series around $\omega h=0$, we obtain:
    \begin{align}
        \sigma^{*} = \frac{2}{h} +O(\omega^2 h) \nonumber
    \end{align}
    as $\omega^2 h\to 0$. The value of $\sigma$ used in all our numerical examples, and the one that we advocate for the practical implementation of the proposed methodology, is then:
    $$
    \sigma = \frac{2}{h},
    $$
    which is independent of both $\omega$ and the order of convergence of the FD scheme. 
    
    Interestingly, a more refined analysis for the selection of $\sigma$ can be conducted for each scheme by utilizing the discrete dispersion relation $\xi=\xi_r(\omega)$; similar results are obtained. For instance, for the second-order scheme, we have
    \begin{align*}
        \xi_1 = \frac{1}{h}\cos^{-1}\left(1-\frac{h^2\omega^2}{2}\right). \nonumber
    \end{align*}
    Defining $\varrho_1(\sigma) = |\rho(\sigma, \xi_1, \omega)|$ and solving for $\sigma$ the equation $\varrho_1'(\sigma) = 0$ where
    \begin{align*}
         \varrho_1'(\sigma) & = 
        \frac{2(-4+h^2\sigma^2)\sqrt{4h^2\omega^2-h^4\omega^4}\sqrt{\frac{(4+h^2\sigma^2)\omega-2\sigma\sqrt{4h^2\omega^2-h^4\omega^4}}{(4+h^2\sigma^2)\omega+2\sigma\sqrt{4h^2\omega^2-h^4\omega^4}}}}{(-4+h^2\sigma^2)^2\omega + 4h^4\sigma^2\omega^3},
    \end{align*}
    surprisingly, we obtain
    \begin{align*}
        \sigma^* = \frac{2}{h},
    \end{align*}
    which also satisfies $\varrho_1''(\sigma^*) >0$ for $\omega \neq 2/h$. 
    
    Even though it is feasible to extend this line of reasoning to the exact discrete wavenumbers for higher-order approximations, the expressions for $\xi_p$ in terms of $h$ and $\omega$ become more intricate. Nevertheless, the analysis conducted for the continuous dispersion relation provides a good approximation. This is further supported by Figure~\ref{fig:rho_sigma}, which illustrates that $\sigma = 2/h$ closely approximates a minimizer of $|\rho|$ for all the admissible discrete wavenumbers $\xi_r$ associated with the second, fourth, sixth, and eighth-order schemes, considering fixed values of $h$ and $\omega$ (refer also to Table \ref{stab:exact_wavenumbers} for more precise values).

    \begin{table}[ht!]\small
        \centering
        \begin{tabular}{@{\hskip .1in}c@{\hskip .2in}c@{\hskip .2in}c@{\hskip .2in}c@{\hskip .2in}c@{\hskip .2in}}
        \toprule
        Order &  $\xi_1$ & $\xi_2$ & $\xi_3$ & $\xi_4$\\
        \midrule
        $2$ & $5.0536$ & -                  & -                  & -\\
        $4$ & $-26.5144i$ & $5.0017$  & -                  & -\\
        $6$ & $9.8894-23.6000i$ & $9.8894+23.6000i$  & $5.0000$ & -\\
        $8$ & $-23.5129i$ & $14.5883-21.1132i$ & $14.5883+21.1132i$ &  $5.0000$\\
        \bottomrule
        \end{tabular}
        \caption{Discrete wavenumbers $\xi_r$, as defined in Equation \eqref{eq:cos_rel}, correspond to finite difference schemes of orders 2, 4, 6, and 8, with $\omega=5$ and $h=0.1$. It is  worth noting that in this case the wavenumber $\xi_p$ of the scheme of order $2p$, where $p=1,2,3$ and $4$, approximates $\omega=5$.}
        \label{stab:exact_wavenumbers}
    \end{table}
    \begin{figure}[ht!]
        \centering
        \subfloat[]{\includegraphics[width=0.45\linewidth]{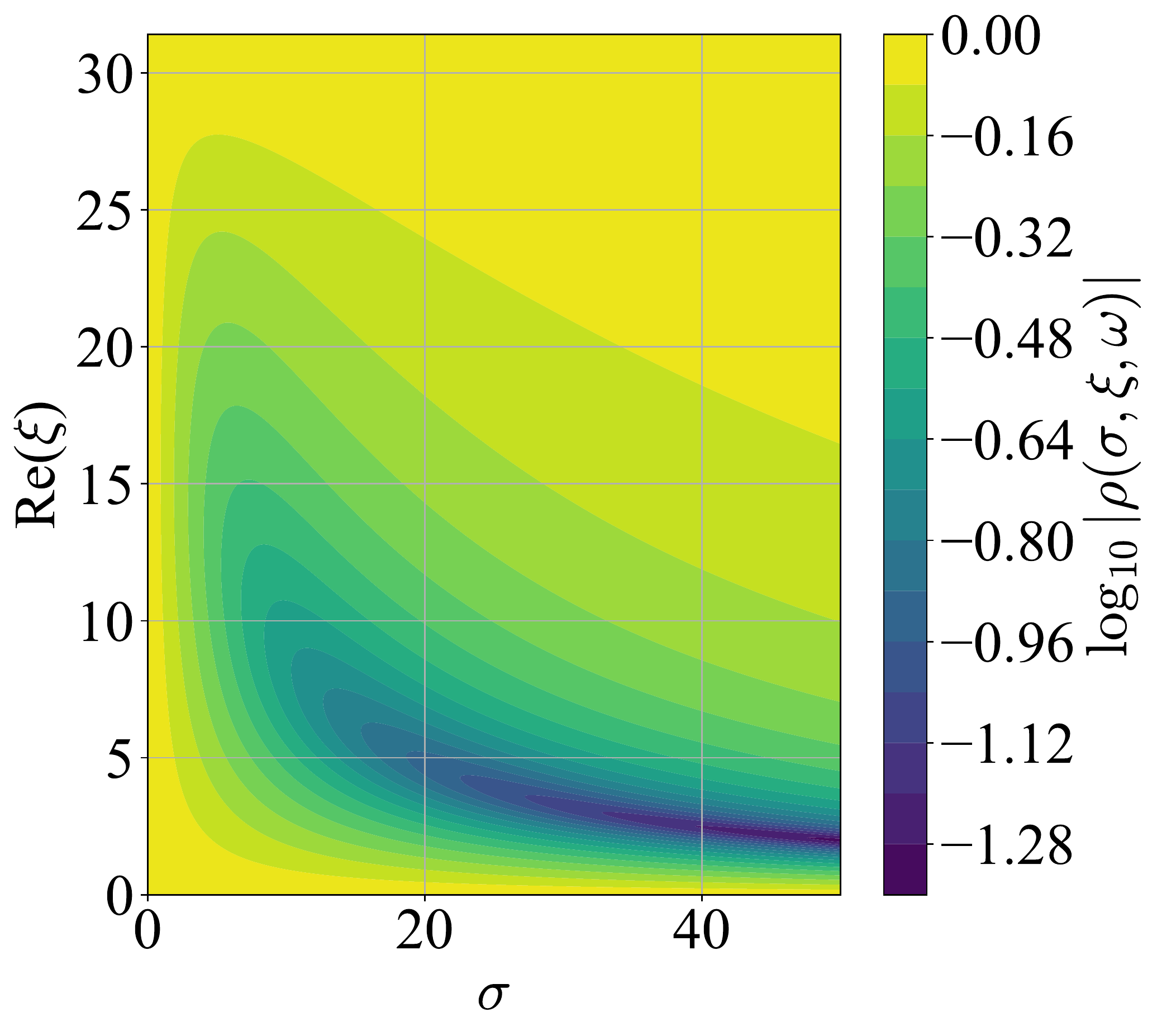}}
        \subfloat[]{\includegraphics[scale=0.45, trim={-0.5cm 0cm 0cm 0cm}]{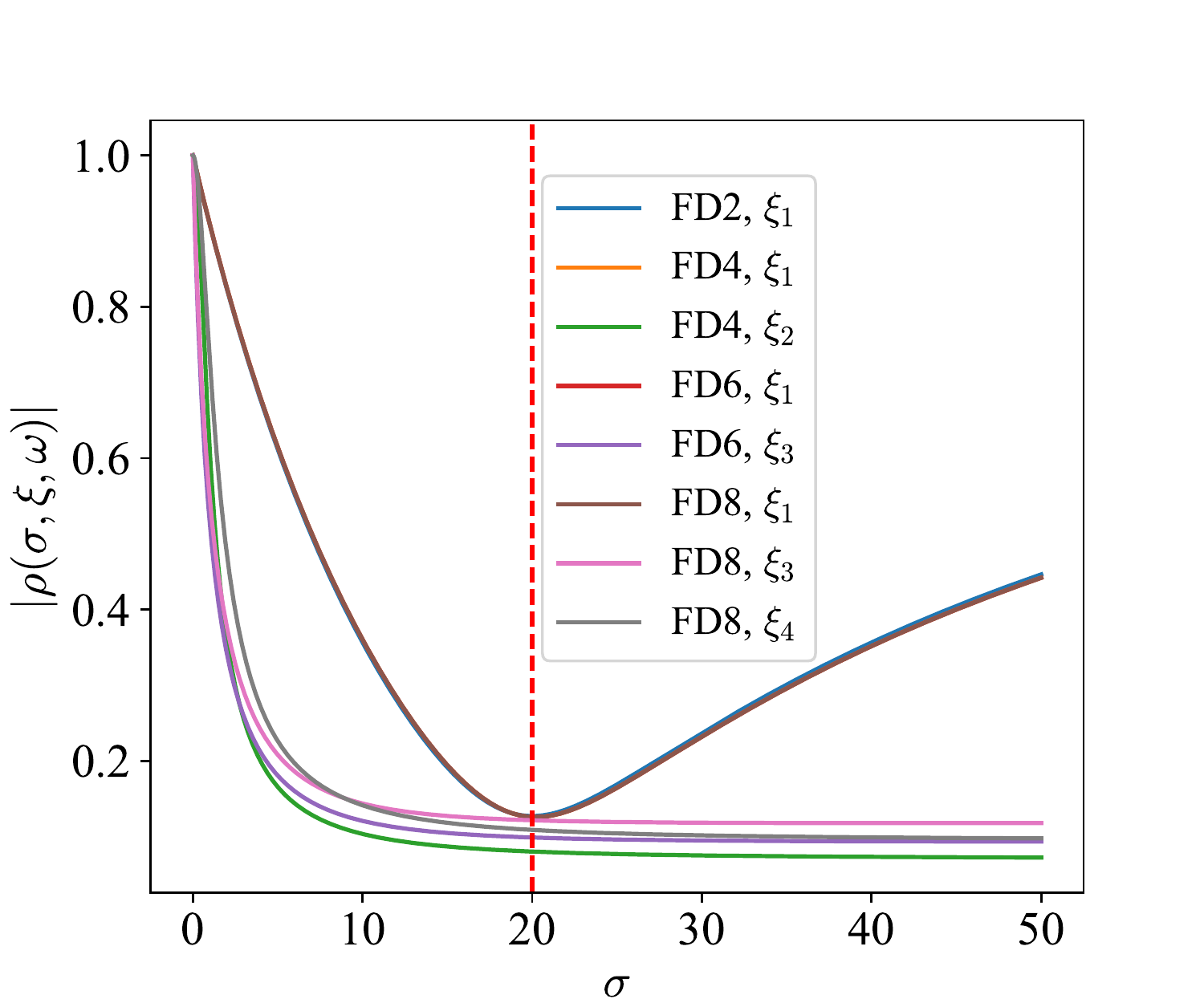}}
        \caption{Plot of the amplitude of the decaying factor $|\rho|$ as a function of $\sigma$ and $\real(\xi)$, for $\omega=5$, $h=0.1$, and $\imag(\xi)=0$. (a) The value $\sigma=2/h=20$ appears to minimize $|\rho|$ for $\xi=\omega=4$, which remains well below~1 at around that point. (b) Plot of $|\rho|$ as a function of $\sigma$ for the discrete wavenumbers $\xi=\xi_r$ in~\eqref{eq:cos_rel} of the finite difference schemes of order 2, 4, 6, and 8, and for $\omega=5$ and $h=0.1$. The dashed red line marks the value of $\sigma=2/h$ where $|\rho|$ appears to be minimal.}
        \label{fig:rho_sigma}
    \end{figure}
    
    % \begin{figure}[ht]
    %     \centering
    %     \includegraphics[width=\linewidth]{journal/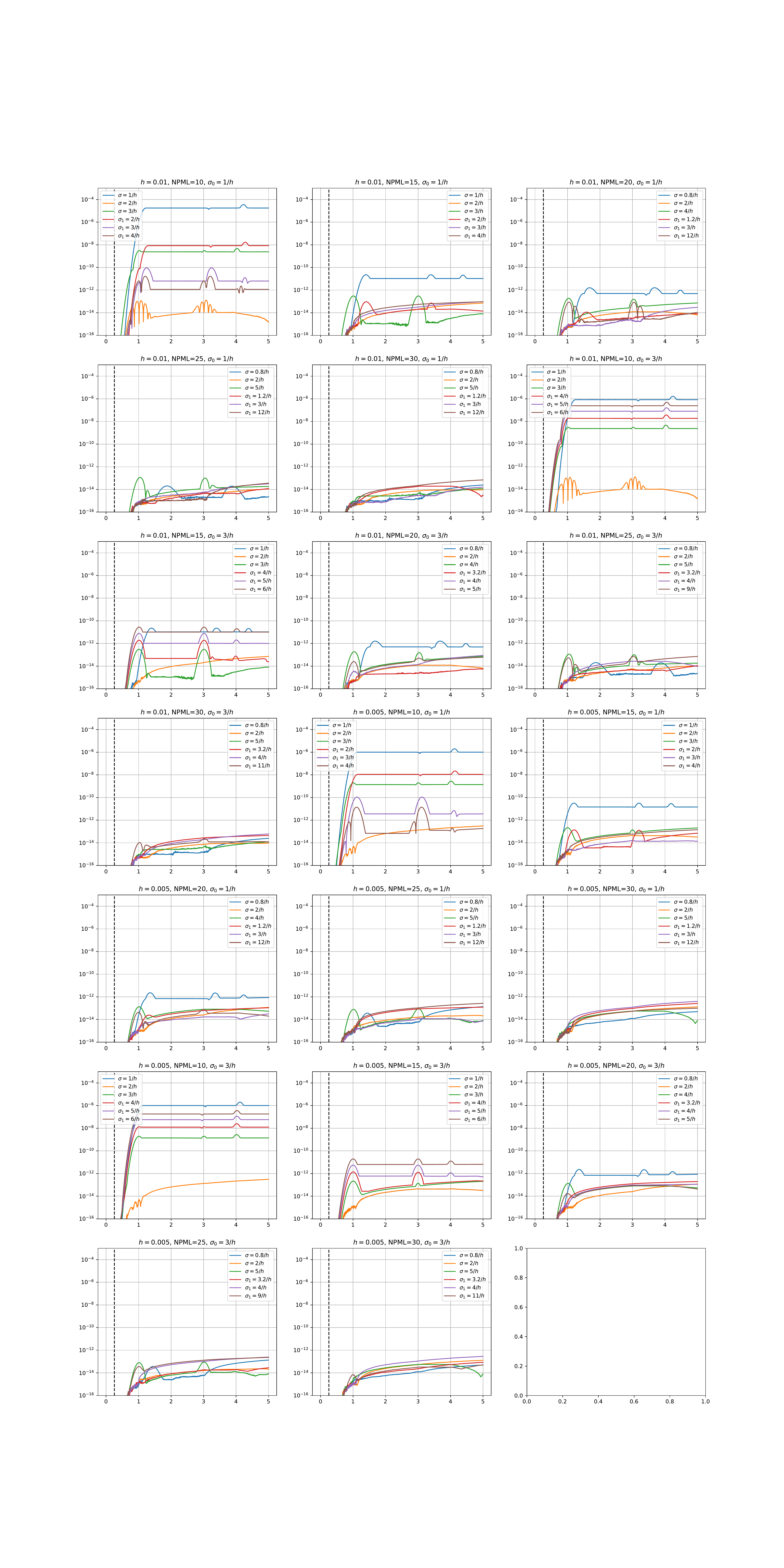}
    %     \caption{\vah{Reflection experiments in one dimension for different values of $\sigma$.}}
    %     \label{fig:sigma_tests}
    % \end{figure}
    
\section{Boundary conditions at RDPML boundaries\label{sec:boundary_conditions}} {In this section, we briefly describe the implementation of Dirichlet and Neumann boundary conditions for the second- and fourth-order RDPML used in the waveguide problem. The guiding principle behind these derivations is to maintain the schemes' high-order accuracy by suitably extending the complex grid function $\mathsf{U}$ in an odd (resp. even) manner with respect to the boundary node for the implementation of Dirichlet (resp. Neumann) boundary conditions. In particular, this extension enables the use of the fourth-order scheme at the final node within the PML domain. For simplicity, in what follows, the equations are stated in one dimension, but they can be readily extended to two or higher dimensions.
    
    \textbf{Second order Dirichlet boundary conditions:} For a grid size $h> 0$ and real grid points $x_j = jh$, $j \in \mathbb{Z}$, we consider the corresponding complex grid $\mathsf Z$ and impose homogeneous Dirichlet boundary conditions at the end of the truncated PML domain, that is, at $j=N$, $N > 0$. This can be directly achieved by setting $U_{1, N-1} = 0$ (see Figure \ref{fig:boundary_condition_repr}). Using the identity $U_{1, N-1}=0$ in the time-domain RDPML equations \eqref{eqn:pmltime} with $p=1$, we arrive at the following modified equations for the node $Z_{0, N-1}$:
    \begin{subequations} \label{eq:boundary_d2}
    \begin{align}
        \frac{\de^2 u_{N-1}}{\de t^2} & = \frac{a_0u_{N-1} + a_1u_{N-2}}{h^2} - a_1\frac{\sigma_{N-2}}{h}\phi^{(1)}_{N-2}\\
        -\frac{\de \psi^{(1)}_{N-1}}{\de t} & = \frac{-u_{N-2}}{2h} + \frac{\sigma_{N-2}}{2}\psi^{(1)}_{N-1}
    \end{align}
    \end{subequations}
    where the finite difference coefficients $a_r$ can be found in Table \ref{tab:fd_examples}. Note that the reaming equation \eqref{eq:pmltime_phi} for the auxiliary variable $\phi_{N-1}^{(1)}$  is not used, and the equations for $j \leq N-2$ remain unchanged from \eqref{eqn:pmltime}.}  
    
    \begin{figure}[hb!]
        \centering
        \includegraphics[width=0.9\linewidth]{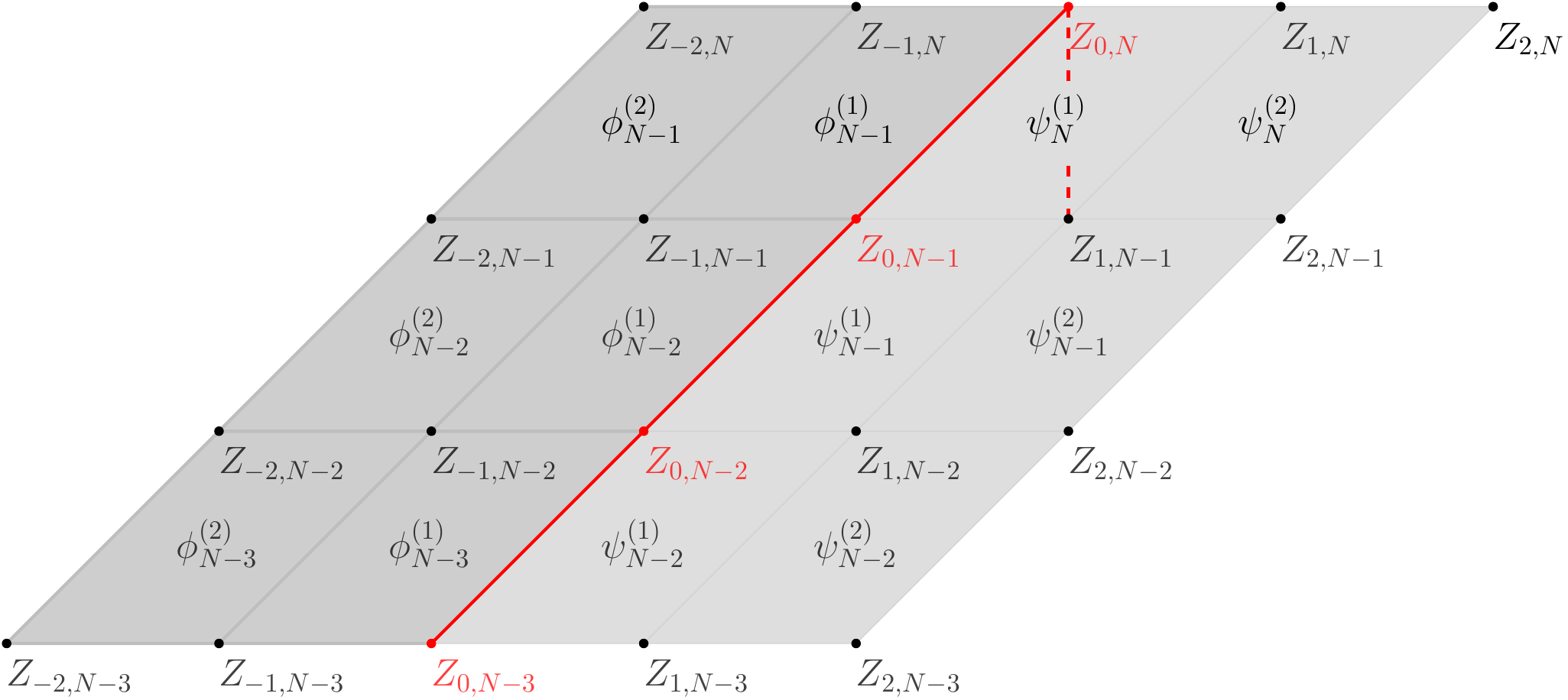}
        \caption{{Representation of boundary conditions in the complex grid $\mathsf Z$ for the fourth-order RDPML. The red dashed line indicates the points where the Dirichlet ($Z_{1, N-1}$) and Neumann ($Z_{0, N}$) boundary conditions are set. The red points indicate the stretching path \eqref{eq:complex_path}}.}
        \label{fig:boundary_condition_repr}
    \end{figure}
    
    {\textbf{Fourth order Dirichlet boundary conditions:} Similarly, for the fourth-order scheme, we enforce homogeneous Dirichlet boundary conditions by again setting $U_{1, N-1}=0$ and complementing it with $U_{2, N-1} = -U_{0, N-1}$. This simple odd extension allows us to write the fourth-order accurate scheme at the final nodes of the truncated PML, $Z_{0, N-1}$ and $Z_{0, N-2}$, without having to introduce additional variables. Indeed, after algebraic manipulations, we arrive at the following modifications of the corresponding RDPML \eqref{eqn:pmltime}:
    \begin{subequations} \label{eq:boundary_d4}
    \begin{align}
        \frac{\de^2 u_{N-1}}{\de t^2} & = \frac{a_2u_{N-3} + a_2u_{N-2} + (a_0-a_2)u_{N-1}}{h^2} - \\
        & \quad -a_2\left(\frac{\sigma_{N-2}}{h}\phi^{(2)}_{N-2} + \frac{\sigma_{N-3}}{h}\phi^{(1)}_{N-3}\right)  - a_1\frac{\sigma_{N-2}}{h}\phi^{(1)}_{N-2},\nonumber\\
        -\frac{\de \psi^{(1)}_{N-1}}{\de t} & = \frac{-u_{N-2}}{2h} + \frac{\sigma_{N-2}}{2}\psi^{(1)}_{N-1},\\
        -\frac{\de \psi^{(2)}_{N-1}}{\de t} & = \frac{\sigma_{N-2}}{2}\psi^{(2)}_{N-1} - \frac{u_{N-1}}{h} - \frac{\sigma_{N-2}}{2}\psi^{(1)}_{N-1},\label{eq:boundary_d4_n1_psi}
    \end{align}
    for $j=N-1$, and
    \begin{align}
        \frac{\de^2 u_{N-2}}{\de t^2} & = \frac{a_2 u_{N-4} + a_1 u_{N-3} + a_0 u_{N-2} + a_1 u_{N-1}}{h^2} - \\
        & \quad -a_2\left(\frac{\sigma_{N-3}}{h}\phi^{(2)}_{N-3} + \frac{\sigma_{N-4}}{h}\phi^{(1)}_{N-4}\right)  - a_1\frac{\sigma_{N-3}}{h}\phi^{(1)}_{N-3} + \nonumber\\
        & \quad  + a_1 \frac{\sigma_{N-2}}{h}\psi^{(1)}_{N-1} + a_2 \frac{\sigma_{N-2}}{h} \psi^{(2)}_{N-1}, \nonumber\\
        - \frac{\de \psi^{(2)}_{N-2}}{\de t} & = \frac{\sigma_{N-3}\psi^{(2)}_{N-2}+\sigma_{N-2}\psi^{(2)}_{N-1}}{2} - \frac{u_{N-2}}{2h} - \frac{\sigma_{N-3}}{2}\psi^{(1)}_{N-2},
    \end{align}
    \end{subequations}  for $j=N-2$.
    The remaining equations in \eqref{eqn:pmltime} for $\psi^{(1)}_{N-2}$, $\phi^{(1)}_{N-2}$, and $\phi^{(2)}_{N-2}$ remain unchanged. Similarly to the second-order scheme, the corresponding equation for $\phi^{(1)}_{N-1}$ is not used. 
    %well defined, this is not an issue since it does not appear in the PDE \eqref{eq:pmltime_pde}. 
    %Also, notice the factor $1/h$ accompanying $u_{N-1}$ in \eqref{eq:boundary_d4_n1_psi} instead of the usual factor of $1/2h$.

    \textbf{Second-order Neumann boundary conditions:} Similar to Dirichlet boundary conditions, we enforce second-order accurate Neumann boundary conditions by setting
    \begin{align}
        \frac{U_{1, N} - U_{-1, N}}{2h} = 0, \nonumber
    \end{align}
    which leads to  $U_{1, N} = U_{-1, N}$. Substituting this identity into \eqref{eqn:pmltime} yields the following changes to equation \eqref{eqn:pmltime} for $j=N$:
    \begin{subequations}
    \begin{align}
        \frac{\de^2 u_N}{\de t^2} & = \frac{2a_1 u_{N-1} + a_0u_N}{h^2} -2a_1 \frac{\sigma_{N-1}}{h}\phi^{(1)}_{N-1}, \nonumber \\
        -\frac{\de \psi^{(1)}_N}{\de t} & = \frac{\sigma_{N-1}}{2}\psi^{(1)}_N -\frac{\sigma_{N-1}}{2}\phi^{(1)}_{N-1}. \nonumber
    \end{align}
    \end{subequations}
    Again, the corresponding equation for $\phi_{N}^{(1)}$ is not used.
    
    \textbf{Fourth order Neumann boundary conditions:} Likewise, we set $U_{-1, N} = U_{1, N}$ and consider an even extension of $\mathsf{U}$ such that $U_{-2, N} = U_{2, N}$. This results in the following changes to equation~\eqref{eqn:pmltime}:
    \begin{subequations}\label{eq:neumann}
    \begin{align}
        \frac{\de^2 u_{N}}{\de t^2} & = \frac{2a_2u_{N-2}+2a_1u_{N-1}+a_0u_N}{h^2}- 2a_2\left(\frac{\sigma_{N-1}}{h}\phi^{(2)}_{N-1} + \frac{\sigma_{N-2}}{h}\phi^{(1)}_{N-2}\right) -\\
        & \quad - 2a_1 \frac{\sigma_{N-1}}{h}\phi^{(1)}_{N-1}, \nonumber \\
        -\frac{\de \psi^{(1)}_{N}}{\de t} & = \frac{\sigma_{N-1}}{2}\psi^{(1)}_N - \frac{\sigma_{N-1}}{2}\phi^{(1)}_{N-1},\\
        -\frac{\de \psi^{(2)}_{N}}{\de t} & = \frac{u_{N-2}-u_N}{2h} + \frac{\sigma_{N-1}}{2}\psi^{(2)}_{N} - \frac{\sigma_{N-1}}{2}\psi^{(1)}_N - \frac{\sigma_{N-1}}{2}\phi^{(2)}_{N-1} - \frac{\sigma_{N-2}}{2}\phi^{(1)}_{N-2},
    \end{align}
     for $j=N$, and 
    \begin{align}
        \frac{\de^2 u_{N-1}}{\de t^2} & = \frac{a_2 u_{N-3} + a_1 u_{N-2} + (a_0+a_2)u_{N-1} + a_1u_{N}}{h^2} -\\
        & \quad -a_2\left(\frac{\sigma_{N-2}}{h}\phi^{(2)}_{N-2} + \frac{\sigma_{N-3}}{h}\phi^{(1)}_{N-3}\right) - a_1\frac{\sigma_{N-2}}{h}\phi^{(1)}_{N-2} + a_1 \frac{\sigma_{N-1}}{h}\psi^{(1)}_N + \nonumber \\
        & \quad + a_2 \left(\frac{\sigma_{N-1}}{h}\psi^{(2)}_N - \frac{\sigma_{N-1}}{h}\phi^{(1)}_{N-1}\right), \nonumber \\
        -\frac{\de \psi^{(2)}_{N-1}}{\de t} & = \frac{\sigma_{N-1}\psi^{(2)}_N + \sigma_{N-2}\psi^{(2)}_{N-1}}{2} - \frac{\sigma_{N-2}}{2}\psi^{(1)}_{N-1} - \frac{\sigma_{N-1}}{2}\phi^{(1)}_{N-1},
    \end{align}
    \end{subequations}
    for $j=N-1$. Once again, the equation for $\phi^{(1)}_N$ is not used.
    
Finally, to validate these equations, we present Figures~\ref{fig:trav_exp_mixed} and~\ref{fig:evan_exp_mixed}, displaying the numerical reflection errors resulting from truncating the RDPML using Neumann boundary conditions in the short waveguide problem considered in Section \ref{sec:evanescent} of the paper. As expected, these results are similar to those obtained by truncating the RDPML using Dirichlet boundary conditions (cf. Figures~\ref{fig:trav_exp} and~\ref{fig:evan_exp} in the paper).
    }
\begin{figure}[ht!]
    \centering
    \subfloat[]{\includegraphics[scale=0.45, trim={2cm 0cm 0cm 0cm}]{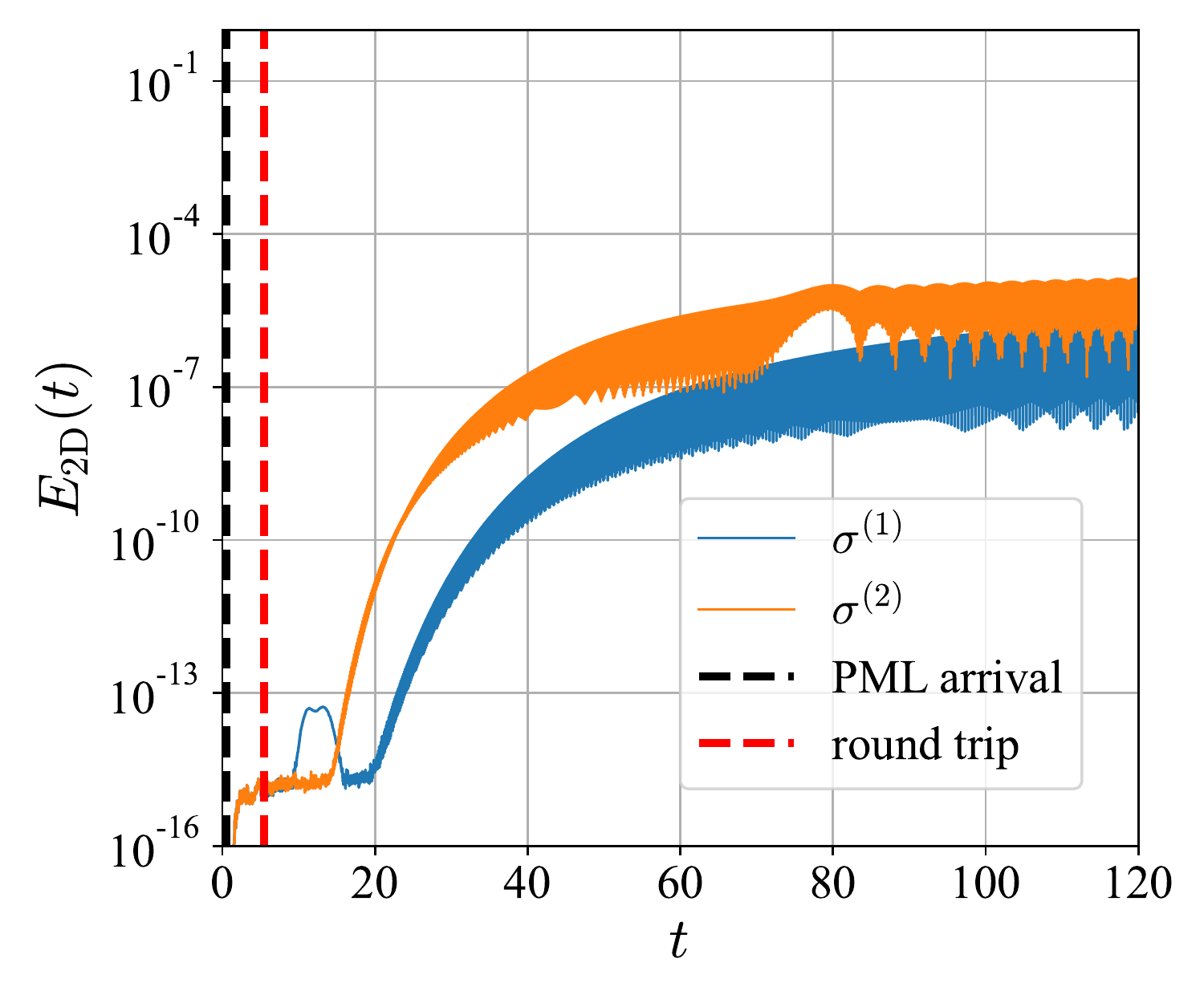}\label{fig:trav_exp_mixed}}
    \subfloat[]{\includegraphics[scale=0.45]{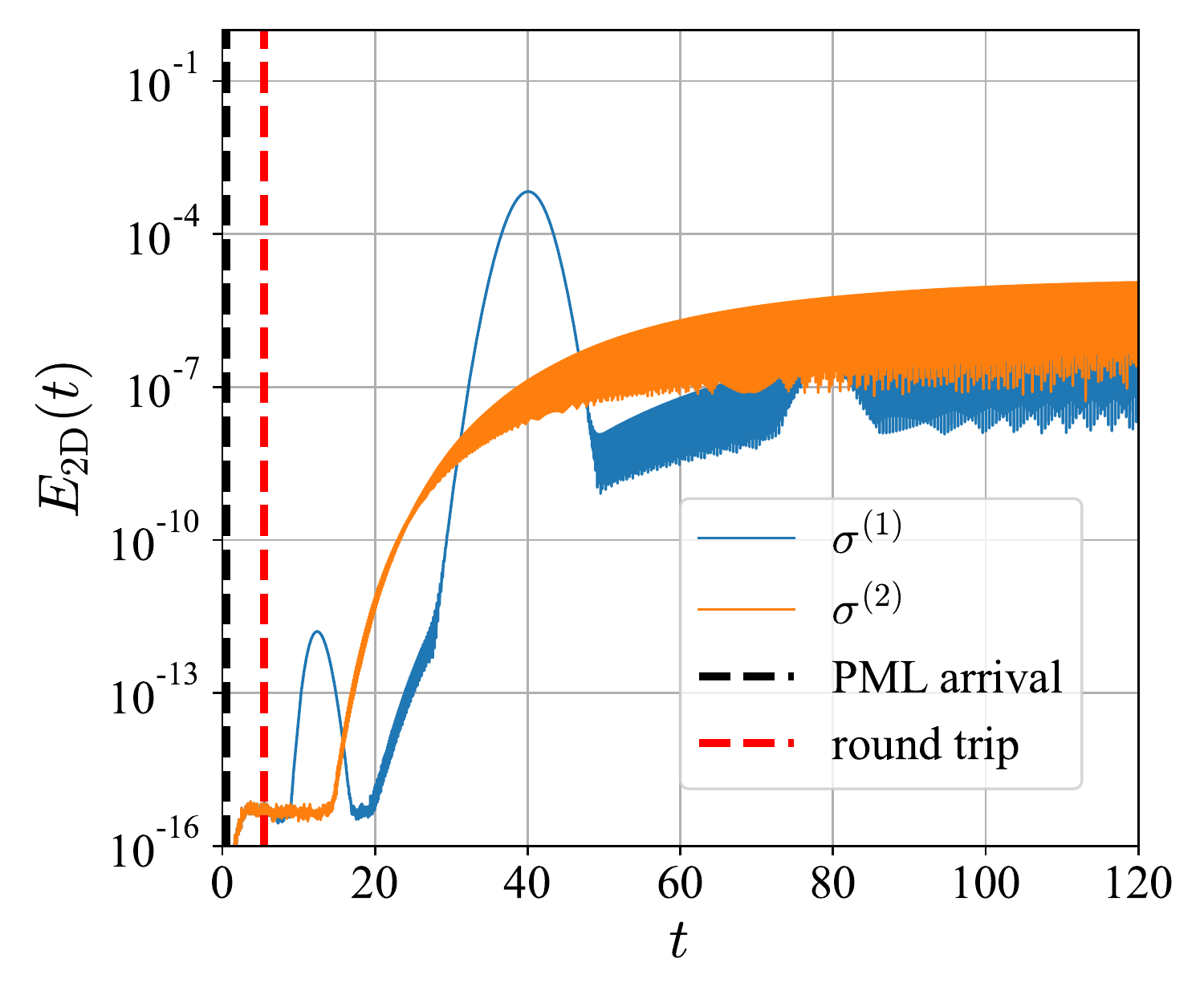}\label{fig:evan_exp_mixed}}
    \caption{{(a)-(b)~Time evolution of $E_{\rm 2D}$~\eqref{eq:discrete_error_3} corresponding to the short waveguide problem using boundary conditions $u_1$ (a) and $u_2$ (b) in~\eqref{eq:waveguide_bcs}, and the damping functions $\sigma^{(1)}$ and $\sigma^{(2)}$ in~\eqref{eq:sigma_evanescent}. The PML is terminated with Neumann boundary conditions \eqref{eq:neumann}. There is no significant difference to terminating the PML with Dirichlet boundary conditions (for comparison see Figures \ref{fig:trav_exp} and \ref{fig:evan_exp} from the main text).}} 
\end{figure}

\bibliographystyle{abbrv}
\bibliography{bibliography.bib}
% ============================================================================

\end{document}